\newtheorem{theorem}{Theorem}
\newtheorem{problem}{Problem}
\newtheorem{lemma}[theorem]{Lemma}
\newcounter{claims}
\newenvironment{claims}{\refstepcounter{claims}\par\medskip\noindent%
{{\bf (\theclaims)}~~}}{\par\medskip}
\newcommand{\claim}[2]{\begin{claims}{\em #2}\label{#1}\end{claims}}
\newcommand{\refclaim}[1]{(\ref{#1})}
\newcommand{\cin}{\mathop{\mathrm{Int}}}
\newcommand{\cex}{\mathop{\mathrm{Ext}}}
\newcommand{\dom}{\mbox{dom}}
\newcommand{\ob}[1]{{\rm O}$_{\rm #1}$}  
\newcommand{\gf}{G_{\varphi}}
\newcommand{\lf}{L_{\varphi}}
\newcommand{\vf}{\varphi}
\title{$5$-choosability of graphs with crossings far apart\thanks{This research was supported by the CZ-SL bilateral project MEB 091037.}}
\author{Zden\v{e}k Dvo\v{r}\'ak\thanks{Computer Science Institute of Charles University, Prague, Czech Republic.
E-mail: {\tt rakdver@iuuk.mff.cuni.cz}.
Supported by Institute for Theoretical Computer Science (ITI), project 1M0545 of Ministry of Education of Czech Republic,
and by project GA201/09/0197 (Graph colorings and flows: structure and applications) of Czech Science Foundation.
Revised under the support of project 14-19503S (Graph coloring and structure) of Czech Science Foundation.}
\and
Bernard Lidick\'y\thanks{
Iowa State University, Ames IA, USA.
E-mail: {\tt lidicky@iastate.edu}.
Supported in part by NSF grants DMS-1266016 and DMS-1600390.
  }
\and
Bojan Mohar\thanks{Department of Mathematics, Simon Fraser University, Burnaby, B.C. V5A 1S6.
  E-mail: {\tt mohar@sfu.ca}.
  Supported in part by an NSERC Discovery Grant (Canada),
  by the Canada Research Chair program, and by the
  Research Grant P1--0297 of ARRS (Slovenia).}~\thanks{On leave from:
  IMFM \& FMF, Department of Mathematics, University of Ljubljana, Ljubljana,
  Slovenia.}
}
\date{}
\begin{document}
\maketitle

\begin{abstract}
We give a new proof of the fact that every planar graph is $5$-choosable,
and use it to show that every graph drawn in the plane so that the distance
between every pair of crossings is at least $15$ is $5$-choosable. At the same time we may allow some vertices to have lists of size four only, as long as they are far apart and far from the crossings.
\end{abstract}

Thomassen~\cite{thomassen1994} showed that every planar graph is $5$-choosable.  
We prove a strengthening of this result, allowing some crossings.  Suppose that a graph $G$ is drawn in the plane
with some crossings, and for $i\in\{1,2\}$, let $u_iv_i$ and $x_iy_i$ be two edges of $G$ crossing each other.
The \emph{distance} between the crossing of $u_1v_1$ with $x_1y_1$ and the crossing of $u_2v_2$ with $x_2y_2$
is the length of the shortest path in $G$ with one end in the set $\{u_1,v_1,x_1,y_1\}$ and the other end in the set $\{u_2,v_2,x_2,y_2\}$.
Our main result is the following.

\begin{theorem}\label{thm-main}
Every graph drawn in the plane so that the distance between every pair of crossings is at least\/ $15$ is $5$-choosable.
\end{theorem}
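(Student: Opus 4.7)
The plan is to obtain Theorem~\ref{thm-main} as an immediate corollary of Theorem~\ref{thm-main0} by specializing to the case $N=\emptyset$. Given a graph $G$ drawn in the plane with every pair of crossings at distance at least $15$, and any list assignment $L$ with $|L(v)|\ge 5$ for every $v\in V(G)$, I would apply Theorem~\ref{thm-main0} to $G$ with the empty set of distinguished vertices and this $L$.

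Under the choice $N=\emptyset$, the two hypotheses of Theorem~\ref{thm-main0} that reference $N$ (the distance at least $13$ from crossings to $N$, and the distance at least $11$ between pairs of vertices in $N$) are vacuously true. The only remaining structural hypothesis is that every pair of crossed edges is at distance at least $15$, which is exactly what Theorem~\ref{thm-main} assumes. On the list side, the requirement $|L(v)|=4$ for $v\in N$ is vacuous, while $|L(v)|\ge 5$ for $v\in V(G)\setminus N=V(G)$ is precisely the $5$-choosability hypothesis. Since the list assignment $L$ is arbitrary, this establishes that $G$ is $5$-choosable.

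Because this reduction is entirely formal, there is no real obstacle specific to Theorem~\ref{thm-main}; the content lies in Theorem~\ref{thm-main0}. Any difficulty has already been absorbed into the proof of Theorem~\ref{thm-main0}, which in turn is built on the robust variant of Thomassen's precoloring-extension theorem (Theorem~\ref{thm-basic}) announced in the introduction and inspired by the technique of~\cite{thomassen1995-34}. Thus the only step to carry out here is the one-line verification above.
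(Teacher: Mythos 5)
Your proposal is correct and is exactly the paper's own argument: the paper states that Theorem~\ref{thm-main0} ``clearly implies'' Theorem~\ref{thm-main}, and your one-line verification (set $N=\emptyset$, note the two $N$-conditions become vacuous and the list condition collapses to $|L(v)|\ge 5$ for all $v$, while ``distance at least $15$ between crossed edges'' matches ``distance at least $15$ between crossings'' since the distance between two crossings is by definition the minimum over pairs of incident crossed edges) is precisely the unstated content behind that remark.
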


Let us recall that a {\em list assignment\/} $L$ for $G$ is a function that
assigns to each vertex of $G$ a set $L(v)$, called the {\em list of admissible
colors} for $v$. An {\em $L$-coloring} is a choice of a color $\vf(v)\in L(v)$
for each $v\in V(G)$ such that no two adjacent vertices receive the same color.
The graph is {\em $k$-choosable} if it admits an $L$-coloring for every list
assignment $L$ with $|L(v)|\ge k$ for every $v\in V(G)$.

The main idea of Thomassen's beautiful proof of $5$-choosability of planar graphs is to establish
the following more general statement.

\begin{theorem}[Thomassen~\cite{thomassen1994}]\label{thm-thom}
Let $G$ be a plane graph with the outer face $F$, $xy$ an edge of $F$, and $L$ a list assignment such that $|L(v)|\ge 5$ for $v\in V(G)\setminus V(F)$,
$|L(v)|\ge 3$ for $v\in V(F)\setminus\{x,y\}$, $|L(x)|=|L(y)|=1$ and $L(x)\neq L(y)$.  Then $G$ is $L$-colorable.
\end{theorem}

Let us note that the lists of $x$ and $y$ of size $1$ give a precoloring of a path of length $1$ in the outer face of $G$.
Naturally, one might try to prove Theorem~\ref{thm-main} by showing a variant of Theorem~\ref{thm-thom} allowing distant crossings.
Unfortunately, almost any attempt to alter the statement of Theorem~\ref{thm-thom} (e.g., by allowing more than two vertices to be precolored,
allowing lists of size $2$ subject to some constraints, allowing some crossings in the drawing, etc.) fails with infinitely many counterexamples.
To overcome this obstacle, we give a new proof of $5$-choosability of planar graphs, see Theorem~\ref{thm-basic} in Section~\ref{sec-planar},
which turns out to be more robust with respect to some strengthenings of the planar 5-choosability theorem.
The proof of Theorem~\ref{thm-basic} is inspired by Thomassen's proof~\cite{thomassen1995-34} of $3$-choosability of planar graphs of girth $5$.

In the course of the proof of Theorem~\ref{thm-main}, it is convenient to allow in addition to crossings also some other irregularities, such as vertices
with fewer than $5$ available colors.  Hence, we actually obtain the following stronger statement (the distance between a vertex $z$ and a crossing of edges $uv$ and $xy$ is the
length of the shortest path from $z$ to $\{u,v,x,y\}$).

\begin{theorem}\label{thm-main0}
Let $G$ be a graph drawn in the plane with some crossings and let $N\subseteq V(G)$ be a set of vertices such that the distance between any pair of crossed edges is at least\/ $15$, the distance between any crossing and a vertex in $N$ is at least\/ $13$, and the distance between any two vertices in $N$ is at least\/ $11$.
Then $G$ is $L$-colorable for any list assignment $L$ such that $|L(v)|=4$ for $v\in N$ and $|L(v)|\ge 5$ for $v\in V(G)\setminus N$.
\end{theorem}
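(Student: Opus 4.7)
My plan is to reduce Theorem~\ref{thm-main0} to a planar statement, namely Theorem~\ref{thm-basic}, which I would first prove as a strengthening of Theorem~\ref{thm-thom}. Theorem~\ref{thm-basic} should allow, in addition to Thomassen's precolored edge on the outer face, a set of ``exceptional'' interior vertices with lists of size $4$, provided they are mutually far apart and far from the precolored edge. The proof would follow Thomassen's induction scheme but borrow the additional flexibility of his $3$-choosability proof for planar graphs of girth $5$: at each reduction step (removing a vertex adjacent to the precolored end, contracting along a chord, distinguishing cases based on whether the neighbor of the precolored vertex has a chord, etc.) I would maintain as an invariant a lower bound on the distance from any remaining exceptional vertex to the precolored portion of the outer face, so that such vertices are only ``pulled in'' after enough colors have been fixed to make their list shortage harmless.

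To deduce Theorem~\ref{thm-main0} from Theorem~\ref{thm-basic}, I would planarize the drawing at each crossing via a small local gadget. At a crossing of edges $e_1$ and $e_2$, I delete the crossed portion and replace it by a planar subgraph on the same four endpoints (together with a handful of auxiliary vertices given $5$-lists), chosen so that $L$-colorings of the gadget correspond exactly to colorings of the original crossing. Because the crossings are at mutual distance $\ge 15$ and at distance $\ge 13$ from $N$, the gadgets are pairwise vertex-disjoint and disjoint from $N$, and the auxiliary vertices inflate graph distances by only a small bounded amount. In the resulting planar graph $G'$, the vertices of $N$ remain the exceptional $4$-list vertices, and the distance hypotheses of Theorem~\ref{thm-main0}---with the slack consumed by the gadgets accounted for---translate to the distance hypotheses required by Theorem~\ref{thm-basic}. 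A trivial precoloring of an edge on the outer face (adjoining one if no suitable edge is present) then lets us invoke Theorem~\ref{thm-basic}, and the coloring pulls back to an $L$-coloring of $G$ by construction of the gadgets.

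The principal obstacle, I expect, is setting up Theorem~\ref{thm-basic} so that the induction genuinely goes through in the presence of interior $4$-list vertices. Thomassen's reductions routinely merge outer-face vertices or identify chord endpoints, and each such operation can shrink the distance from an exceptional vertex to the outer face; maintaining a robust distance invariant through all the case distinctions, while still being able to exploit the hypothesis $|L(v)|=4$ once such a vertex finally reaches the outer face, is where the argument earns its keep. Once Theorem~\ref{thm-basic} is established with sufficient slack, the crossing gadget and the verification of the constants $15$, $13$, $11$ should reduce to bookkeeping about the radius of the gadget versus the distance budget required by Theorem~\ref{thm-basic}.
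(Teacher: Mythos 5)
The decisive step of your reduction---replacing each crossing by a planar gadget on the four endpoints whose $L$-colorings ``correspond exactly to colorings of the original crossing''---cannot be carried out, and this is not a matter of bookkeeping but a genuine impossibility. Such a gadget $H$ would have terminals $x,u,y,v$ in this cyclic order on its outer face and would have to guarantee that \emph{every} proper $L$-coloring of $H$ satisfies $\psi(u)\neq\psi(v)$ (and $\psi(x)\neq\psi(y)$), while every assignment with both inequalities extends into $H$. Since $u$ and $v$ are interleaved with $x$ and $y$, at least one of the two pairs, say $\{u,v\}$, is non-adjacent in the planar gadget, so the inequality $\psi(u)\neq\psi(v)$ must be forced structurally. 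Identify $u$ with $v$ through the outer face of $H$: the result $H'$ is still planar, the identified vertex carries the list $L(u)\cap L(v)$ (nonempty in the only case where the constraint is not vacuous), and all auxiliary vertices carry $5$-lists. By Thomassen's Theorem~\ref{thm-thom} (indeed by its trivial one-precolored-vertex form), $H'$ is $L$-colorable, which yields an $L$-coloring of $H$ with $\psi(u)=\psi(v)$. So no planar gadget with $5$-listed auxiliary vertices can simulate a crossing, no matter how the internal lists are chosen. This is exactly why the paper does not planarize: Theorem~\ref{thm-maingen} keeps the crossings in the graph throughout the induction and controls them via the $(P,N,M)$-valid/distant framework, the rank function on special subgraphs, and the explicit obstruction list (O). The only situation in which a crossing can be dissolved by a local operation is when it can be brought to the outer face, where one precolors a path through the crossing and compensates by shrinking one list---this is precisely the argument in the proof of Theorem~\ref{thm-2x}, and it does not generalize to many crossings in the interior.

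The first half of your plan---a planar theorem with a precolored edge and interior $4$-list vertices that are pairwise far apart and far from the precolored edge---is essentially Theorem~\ref{thm-mainalt}, which the paper obtains as a \emph{corollary} of the main result rather than as a lemma, and your worry about maintaining a distance invariant under Thomassen-style reductions is well placed: peeling off outer-face layers brings interior vertices to the boundary after a number of steps unrelated to the original distances, so the invariant one can actually maintain is the mutual distance between the deficient objects and the precolored path (this is what the $(P,N,M)$-distant condition formalizes), and making the induction close requires the additional hypotheses and obstruction analysis of Theorem~\ref{thm-basic} and Theorem~\ref{thm-maingen}. Even granting that planar statement, however, the reduction of Theorem~\ref{thm-main0} to it fails at the gadget step, so the proposal does not yield a proof.
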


Some distance condition on the crossings in Theorem~\ref{thm-main} is necessary, even if we allow
only three crossings, as shown by $K_6$. On the other hand, it was proved in
\cite{LidDvoSkre} and independently also in \cite{CaHa11} that the distance
requirement is not needed, if we have at most two crossings. The inductive
proof of Theorem~\ref{thm-main0} involves a stronger inductive hypothesis that
is stated later as Theorem \ref{thm-maingen} and in particular also implies the
above-mentioned result from~\cite{LidDvoSkre, CaHa11}, see Section~\ref{sec-main}.

\begin{theorem}[\cite{LidDvoSkre, CaHa11}]
\label{thm-2x}
Every graph whose crossing number is at most two is $5$-choosable.
\end{theorem}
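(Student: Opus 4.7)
The plan is to derive Theorem~\ref{thm-2x} as a direct corollary of the stronger inductive hypothesis (Theorem~\ref{thm-maingen}) that will be used to prove Theorem~\ref{thm-main0}. The distance-$15$ condition in Theorem~\ref{thm-main0} is an artifact of the induction; the stronger statement is designed to be elastic enough to subsume both the main theorem and the bounded-crossing-number case of \cite{LidDvoSkre, CaHa11} without any distance requirement.

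Given a graph $G$ with crossing number at most two, I would fix a drawing of $G$ realizing at most two crossings and split the proof by the number of crossings. The subcases of zero or one crossings are handled by Thomassen's Theorem~\ref{thm-thom} together with the standard planarization trick: either delete one of the two crossing edges and extend the resulting $L$-coloring of the planar graph, or, for the one-crossing case, replace the crossing point with a new degree-four vertex supplied with a sufficiently large list. Neither of these subcases requires machinery beyond Theorem~\ref{thm-thom}.

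The substantive case is when the drawing has exactly two crossings. If the two crossings lie at distance at least $15$ in $G$, then Theorem~\ref{thm-main} applies directly (with $N=\emptyset$) and we are done. Otherwise the two crossings are close, and here my plan is to appeal to Theorem~\ref{thm-maingen}: because that hypothesis is engineered to let the induction push past crossings without a distance cushion by carrying extra precoloring data on the outer face, any two-crossing configuration should fit into its framework once one chooses an appropriate outer face of $G$, a precolored path on it, and any required list adjustments on boundary vertices.

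The main obstacle in executing this plan is therefore not combinatorial but structural: one must translate the data of $G$ (drawing, list assignment, crossing locations) into precisely the boundary/precoloring data demanded by the interface of Theorem~\ref{thm-maingen}. Once this packaging is done, the $L$-coloring supplied by Theorem~\ref{thm-maingen} immediately yields the desired $L$-coloring of $G$, giving Theorem~\ref{thm-2x}.
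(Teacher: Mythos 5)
Your plan correctly identifies that Theorem~\ref{thm-2x} should follow from Theorem~\ref{thm-maingen}, but the step you defer as mere ``packaging'' is in fact the entire content of the proof, and your stated reason for believing it must go through rests on a misreading of the hypotheses. Theorem~\ref{thm-maingen} does \emph{not} allow crossings without a distance cushion: a crossing is a special subgraph of rank $4$, so two crossings in the drawing must be at distance at least $15$ for the $(P,N,M)$-distant condition to hold. Hence when the two crossings are close together you cannot apply Theorem~\ref{thm-maingen} to $G$ as drawn, no matter how you choose the outer face and the precolored path. What the paper actually does is surgery that \emph{eliminates} a crossing and converts its data into precoloring data: for a crossing of $xy$ with $uv$ not involved in the other crossing, delete both edges, add the cycle $xuyv$, redraw with this cycle as the outer face, precolor the length-two path $xuy$ (a path of length two contributes no special subgraph --- only the middle edge of a length-three $P$ does), and shrink the list of $v$ by one color; if $uv$ is involved in both crossings, one deletes $xy$ and $uv$, adds only $xu$ and $uy$, obtains a planar graph, and places $v$ into $N$. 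In either case at most one special subgraph survives, so the distance condition holds vacuously and Theorem~\ref{thm-maingen} applies. Without this reduction your argument has no way to satisfy the distance hypothesis in the case of two nearby crossings.

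Your treatment of the zero- and one-crossing cases is also not sound as stated. Deleting one of two crossing edges and coloring the resulting planar graph does not yield a coloring of $G$: the endpoints of the deleted edge may receive the same color, and there is nothing to ``extend.'' Likewise, planarizing by a dummy degree-four vertex at the crossing destroys the adjacency constraints $xy$ and $uv$, so a coloring of the planarized graph need not induce a proper coloring of $G$. The one-crossing case genuinely requires the same surgery described above (or the result of \cite{LidDvoSkre, CaHa11}); Theorem~\ref{thm-thom} alone does not suffice for it.
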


The proof of Theorem \ref{thm-2x} is given at the end of Section~\ref{sec-main}.
Another special case of Theorem \ref{thm-main0} is the following.

\begin{theorem}\label{thm-mainalt}
Let $G$ be a planar graph and $N\subseteq V(G)$ a set of vertices such that the distance between any pair of vertices in $N$ is at least\/ $11$.
Then $G$ is $L$-colorable for any list assignment $L$ such that $|L(v)|=4$ for $v\in N$ and $|L(v)|\ge 5$ for $v\in V(G)\setminus N$.
\end{theorem}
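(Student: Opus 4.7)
The plan is to derive Theorem \ref{thm-mainalt} as an immediate corollary of Theorem \ref{thm-main0}. Since $G$ is planar, fix a plane embedding of $G$ with no crossings. The two distance hypotheses of Theorem \ref{thm-main0} that mention crossings---that pairs of crossed edges are at distance at least $15$ and that crossings are at distance at least $13$ from vertices of $N$---are then vacuously satisfied, because the drawing has no crossings at all. The only remaining hypothesis, that vertices of $N$ are pairwise at distance at least $11$, is exactly what we are given, so Theorem \ref{thm-main0} produces the desired $L$-coloring.

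Thus essentially all of the content sits inside Theorem \ref{thm-main0}, and in particular inside its inductive strengthening referred to earlier as Theorem \ref{thm-maingen}. If instead one wanted a standalone direct proof of the planar case, the natural template would be Thomassen's argument from Theorem \ref{thm-thom}. One would strengthen the statement to allow a short precolored path on the outer face, lists of size $3$ on most boundary vertices, and a controlled collection of interior $4$-list vertices constrained by their distances to one another and to the outer face. One would then induct on $|V(G)|$ by peeling off boundary structure (a chord, a vertex of degree $2$, or the closed neighbourhood of a precolored vertex) to obtain one or two smaller instances still satisfying the hypotheses.

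The main obstacle in any such direct argument is exactly the $4$-list vertices in $N$. In Thomassen's scheme, the interior $5$-lists are used crucially: a single precoloring constraint from a newly coloured neighbour still leaves $4$ admissible colors, so the colouring keeps propagating inward. A $4$-list vertex $v \in N$ destroys this slack locally, and one has to recover the lost colour somewhere. The distance-$11$ hypothesis is tailored for this: it gives each $v \in N$ its own ball of radius roughly $5$ containing no other $4$-list vertex, which is exactly the room needed to absorb the deficit by a local rearrangement before resuming the Thomassen-type induction on the rest of $G$.
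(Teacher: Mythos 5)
Your first paragraph is exactly the paper's argument: Theorem~\ref{thm-mainalt} is stated as a special case of Theorem~\ref{thm-main0}, obtained by taking a crossing-free plane drawing so that the two distance conditions involving crossings hold vacuously and only the distance-$11$ condition on $N$ remains. The remaining discussion is commentary rather than proof, but the deduction itself is correct and matches the paper.
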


The last result is motivated by the result of Voigt~\cite{voigt1993} showing that not all planar graphs are $4$-choosable.
Furthermore, it is related to the following problem of Albertson~\cite{Alb98}:

\begin{problem}\label{quest-albertson}
Does there exist a constant $d$ such that whenever $G$ is a planar graph with list assignment $L$ that
gives a list of size one or five to each vertex and the distance between any pair of vertices with list
of size one is at least $d$, then $G$ is $L$-colorable?
\end{problem}

Building upon the ideas of this paper, we were able to give a positive answer to this problem, which we present in a separate paper~\cite{LidDvoMohPos}.

We start with giving the new proof of $5$-choosability of planar graphs in Section~\ref{sec-planar}.  After introducing some
terminology in Section~\ref{sec-term}, we formulate a strengthening of Theorem~\ref{thm-main0} suitable for the inductive argument
in Section~\ref{sec-main}.  We give a proof of this more
general statement in Section~\ref{sec-maingen}.

\section{Planar graphs}\label{sec-planar}

Let $P$ be a path or a cycle.
The {\em length} $\ell(P)$ of $P$ is the number of its edges, i.e., a path of length $l$ has $l+1$ vertices and a cycle of length $l$ has $l$ vertices.  Given a graph $G$ and a cycle $K\subseteq G$, an edge $uv$ of $G$ is a {\em chord\/} of $K$ if $u,v\in V(K)$, but $uv$ is not an edge of $K$.  For an integer $k\ge 2$, a path $v_0v_1\ldots v_k$ is
a {\em $k$-chord\/} if $v_0,v_k\in K$ and $v_1, \ldots, v_{k-1}\not\in V(K)$.  We define a chord to be a {\em $1$-chord}.  If $G$ is a plane graph, let
$\cin_K(G)$ be the subgraph of $G$ consisting of the vertices and edges drawn inside the closed disc bounded by $K$,
and $\cex_K(G)$ the subgraph of $G$ obtained by removing all vertices and edges drawn inside the open disc bounded by $K$.
In particular, $K = \cin_K(G) \cap \cex_K(G)$. Note that
each $k$-chord of $K$ belongs to exactly one of $\cin_K(G)$ and $\cex_K(G)$.  We say that a cycle $K$ is \emph{separating}
if $V(\cin_K(G))\neq V(K)\neq V(\cex_K(G))$.
If the cycle $K$ bounds the outer face of $G$ and $Q$ is a $k$-chord of $K$, let $C_1$ and $C_2$ be the two cycles in $K\cup Q$ that contain $Q$. Then the subgraphs $G_1=\cin_{C_1}(G)$ and $G_2=\cin_{C_2}(G)$ are the {\em $Q$-components} of~$G$.

As we mentioned earlier, Thomassen's Theorem \ref{thm-thom} does not extend to the case when we have a precolored path of length two. However, if we strengthen the condition on the list sizes of the other vertices on the outer face, such an extension is possible.

\begin{theorem}\label{thm-basic}
Let $G$ be a plane graph with the outer face $F$, $P$ a subpath of $F$ of length at most two and $L$ a list assignment such that the following conditions are satisfied:
\begin{itemize}
\setlength{\itemsep}{0pt}
\item[\rm (i)] $|L(v)|\ge 5$ for $v\in V(G)\setminus V(F)$,
\item[\rm (ii)] $|L(v)|\ge 3$ for $v\in V(F)\setminus V(P)$,
\item[\rm (iii)] $|L(v)|=1$ for $v\in V(P)$,
\item[\rm (iv)] no two vertices with lists of size three are adjacent in $G$,
\item[\rm (v)] $L$ gives a proper coloring to the subgraph induced by $V(P)$, and
\item[\rm (vi)] if $P=uvw$ has length two and $x$ is a common neighbor of $u$, $v$ and $w$, then $L(x)\neq L(u)\cup L(v)\cup L(w)$.
\end{itemize}
Then $G$ is $L$-colorable.
\end{theorem}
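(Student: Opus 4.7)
The plan is to prove Theorem~\ref{thm-basic} by induction on $|V(G)|$. Standard reductions---splitting along cut vertices, and splitting along chords of the outer cycle $C$ (applying the induction hypothesis to each piece, with the chord becoming a precolored length-$1$ path in one of them)---let me assume $G$ is a $2$-connected plane graph whose outer face is bounded by a chord-free cycle $C$ containing $V(P)$. I may additionally assume interior faces are as small as possible (ideally triangular) by adding any missing edges inside the interior; such additions do not create new adjacencies between outer $3$-list vertices and only make the list-coloring task harder.

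When $\ell(P) \le 1$, the argument closely mirrors Thomassen's original proof of Theorem~\ref{thm-thom}, with additional bookkeeping to preserve condition~(iv). Thomassen's trick of deleting an outer vertex $z$ adjacent to an endpoint of $P$ and shrinking the lists of $z$'s interior neighbors to size~$3$ (using two colors from $L(z) \setminus L(P)$) can produce adjacent $3$-list vertices on the new outer face, which would violate~(iv); I avoid this by choosing $z$ more carefully or by pre-coloring (using the size-$\ge 5$ list) at most one of $z$'s interior neighbors before the shrinking.

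The main new case is $\ell(P) = 2$, with $P = uvw$. Because $C$ is chord-free, the outer neighbors of $v$ are exactly $u$ and $w$; let $x_1, \ldots, x_m$ denote the interior neighbors of $v$ in cyclic order. If $m = 0$, then $v$ has degree $2$ and the triangle $uvw$ is an interior face; I delete $v$ (adding the edge $uw$ if not already present) and apply induction with $P' = uw$ of length~$1$. If $m = 1$, then the unique interior neighbor $x := x_1$ is a common neighbor of $u, v, w$; I color $x$ with some $c \in L(x)$ avoiding the three precolored colors and the boundedly many ``obstructing'' colors needed so that condition~(vi) holds for the reduced instance---since the common neighbors of $u, x, w$ are bounded in number by planarity and $|L(x)| \ge 5$, such a $c$ exists. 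After deleting $v$, the new outer face contains $u, x, w$ as a precolored path of length~$2$, and the induction hypothesis applies. If $m \ge 2$, the vertices $u$ and $w$ become non-adjacent on the new outer face after removing $v$, so I must instead look for a structural split via a $2$-chord or $3$-chord of $C$ involving some $x_i$, partitioning $G$ into two smaller instances each handled separately by induction.

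The principal obstacle I anticipate is the subcase $m \ge 2$: absent an immediate split, the reduction must combine deletion of $v$ with pre-coloring of one or more of the $x_i$ in order to re-create a precolored-path structure on the new outer face while simultaneously preserving conditions~(iv) and~(vi). The interplay of these two conditions---(iv) restricting how lists may be shrunk during Thomassen-style deletion, and~(vi) ruling out the obstructive configurations around the middle vertex $v$---forces the delicate case analysis characteristic of this proof.
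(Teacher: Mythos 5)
Your proposal sets up the right inductive framework (minimal counterexample, $2$-connectivity, chord-free outer cycle) and honestly isolates where the difficulty lies, but it does not close the argument. The case you label $m\ge 2$ --- which is the generic case, since the middle vertex $p_1$ of $P$ will typically have many interior neighbors --- is left at the level of ``look for a structural split via a $2$-chord or $3$-chord \ldots\ the reduction must combine deletion of $v$ with pre-coloring of one or more of the $x_i$''. That is a description of the problem, not a proof. The paper's proof in fact never attacks the middle vertex of $P$. Following Thomassen's proof of $3$-choosability for girth $5$, it works at the far end of $P$: writing the outer cycle as $p_2p_1p_0v_1v_2v_3\ldots$, it first establishes a structural claim about $2$-chords (the component of a $2$-chord $uvw$ not containing $P$ is either the triangle $uvw$ or that triangle plus a single $3$-list vertex adjacent to $u$, $v$ and $w$), and then colors and deletes a set $X\subseteq\{v_1,v_2,v_3\}$ chosen by a four-case rule (X1)--(X4) keyed to which of $|L(v_1)|,\ldots,|L(v_4)|$ equal $3$; the rule is engineered so that in $G-X$ condition (iv) can fail only in a configuration that the $2$-chord claim (together with the absence of separating $\le 4$-cycles) forbids. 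Nothing in your proposal plays the role of this mechanism, and it is the heart of the proof.

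Two further gaps. First, condition (iv) cannot be preserved by ``mirroring Thomassen's original proof'' when $\ell(P)\le 1$: Thomassen's reduction deletes an outer vertex $z$ and cuts the lists of all of $z$'s interior neighbors down to size $3$, and in the triangulated setting you impose those neighbors are consecutive and pairwise adjacent on the new outer face, so (iv) fails for all of them at once; pre-coloring one of them does not repair this when $z$ has three or more interior neighbors. The paper sidesteps the issue entirely: when $\ell(P)<2$ it simply precolors $2-\ell(P)$ further outer vertices and reduces to the case $\ell(P)=2$, so only the length-two case ever needs an argument. Second, your preprocessing step of triangulating the interior is not a free move here: adding edges can create a new common neighbor $x$ of $u$, $v$, $w$ with $L(x)=L(u)\cup L(v)\cup L(w)$ (and can create chords, i.e.\ new adjacencies between outer $3$-list vertices), so the modified graph need not satisfy hypotheses (iv) and (vi) and the induction hypothesis cannot be applied to it.
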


\begin{proof}
Suppose for a contradiction that the claim is false, and let $G$ be a counterexample with $|V(G)|+|E(G)|$ the smallest possible,
and subject to that, with the longest path $P$ and with the minimum size of the lists (while satisfying (i)--(vi)).
It is clear that $G$ is connected and that every vertex $v\in V(G)$ satisfies $\deg(v)\ge |L(v)|$.

Furthermore, $G$ is $2$-connected: otherwise, let $v$ be a cut-vertex and let $G_1$
and $G_2$ be subgraphs of $G$ such that $G_1\cup G_2=G$, $V(G_1)\cap V(G_2)=\{v\}$ and $|V(G_1)|, |V(G_2)|>1$.  If $v\in V(P)$, then by the minimality of $G$ there exist $L$-colorings of $G_1$ and $G_2$, and
these colorings together give an $L$-coloring of $G$.  Otherwise, we may assume by symmetry that $P\subseteq G_1$. Consider an $L$-coloring $\vf$ of $G_1$.
Let $L_2$ be the list assignment for $G_2$ such that $L_2(u)=L(u)$ for $u\neq v$ and $L_2(v)=\{\vf(v)\}$.
By the minimality of $G$, $G_2$ is $L_2$-colorable, and this coloring together with $\vf$ gives an $L$-coloring of $G$.

Every triangle $T$ in $G$ bounds a face: otherwise, first color the subgraph $\cex_T(G)$
and then extend the coloring to $\cin_T(G)$.  A similar argument shows that $G$ contains no separating $4$-cycles;
otherwise, consider such a $4$-cycle $K=k_1k_2k_3k_4$, and let $\vf$ be an $L$-coloring of $\cex_K(G)$.
Let $G'=\cin_K(G)$.  Since $K$ is separating, we have $V(G')\neq V(K)$, and since every triangle bounds a face, we conclude that $K$ has
no chord in $G'$.  Let $L'$ be the list assignment for $G'-k_1$ such that $L'(z)=\{\vf(z)\}$ for $z\in \{k_2,k_3,k_4\}$,
$L'(z)=L(z)\setminus \{\vf(k_1)\}$ if $z\not\in\{k_2,k_4\}$ is a neighbor of $k_1$ and $L'(z)=L(z)$ if $z$ is any other vertex.
By the minimality of $G$, the graph $G'-k_1$ is $L'$-colorable, and this coloring together with $\vf$ gives an $L$-coloring of $G$.

Since $G$ is 2-connected, its outer face is bounded by a cycle, which we denote by $F$ as well.
Next, we show that $F$ has no chords. Otherwise, let $uv$ be a chord of $F$ and let $G_1$ and $G_2$ be the $uv$-components of $G$.
If $P\subseteq G_1$, then we first color $G_1$ and then extend the coloring to $G_2$.  The case that $P\subseteq G_2$ is symmetric.  It follows that $P$ has length two and all the chords of $F$ are incident
with its middle vertex.  Let $P=z_1uz_2$, where $z_i\in V(G_i)$ for $i\in\{1,2\}$.  Let $\vf$ be an $L$-coloring of $G_1$ and let $L_2$ be the list assignment for $G_2$ such that
$L_2(z)=L(z)$ for $z\neq v$ and $L_2(v)=\{\vf(v)\}$. Since $G$ is not $L$-colorable, $G_2$ is not $L_2$-colorable.
By the minimality of $G$, either $v$ is adjacent to $z_2$, or $u$, $v$ and $z_2$ have a common neighbor $w$ with list of size three (which means, in particular, that $w\in V(F)$). 
Since every chord of $G$ is incident with $u$, the edge $vz_2$ or $vw$ belongs to $F$.  Since every triangle bounds a face, we conclude that $v$ has
degree two in $G_2$. By symmetry, $v$ has degree two in $G_1$ as well, and thus $v$ has degree three in $G$.  It follows that $|L(v)|=3$,
and thus $v$ cannot be adjacent to any other vertex with list of size three.  In particular, we cannot have the case with the vertex $w$.  We conclude that $v$ is adjacent to $z_1$ and $z_2$ and
$V(G)=\{u,v,z_1,z_2\}$.  However, $L(v)\neq L(u)\cup L(z_1)\cup L(z_2)$ by (vi), and thus $G$ is $L$-colorable.  This contradiction proves that $F$ has no chords.

Similarly, we have the following property:

\claim{cl-bas-2chord}{Let $uvw$ be a $2$-chord of $F$ and let $G_1$ and $G_2$ be $uvw$-components of $G$.  If $P\subseteq G_1$, then
either $u$ and $w$ are adjacent and $G_2$ is equal to the triangle $uvw$, or there exists a vertex $x$ such that $V(G_2)=\{u,v,w,x\}$,
$|L(x)|=3$ and $x$ is adjacent to $u$, $v$ and $w$.}

If $\ell(P)<2$, then it is easy to see that we can precolor $2-\ell(P)$ more vertices of $F$ without violating (vi). Thus, we may assume that $\ell(P)=2$.  Let $P=p_0p_1p_2$.
Suppose that $p_0$, $p_1$ and $p_2$ have a common neighbor $v$.  If $v\in V(F)$, then $V(G)=\{p_0,p_1,p_2,v\}$ and $G$ is $L$-colorable.
If $v\not\in V(F)$, then $v$ has degree at most four in $G$ by \refclaim{cl-bas-2chord} and thus $\deg(v)<|L(v)|$, which is a contradiction.
Therefore, $p_0$, $p_1$ and $p_2$ have no common neighbor.

Furthermore, $\ell(F)\ge 6$:  If $\ell(F)=3$, then
we remove one vertex of $F$ and remove its color from the lists of all its neighbors, and observe that the resulting graph is a smaller counterexample
to Theorem~\ref{thm-basic}.  In the case when $\ell(F)=4$, then similarly color and remove the vertex of $V(F)\setminus V(P)$.  Finally, suppose that
$\ell(F)=5$.  Let $\vf$ be an arbitrary $L$-coloring of $F=p_2p_1p_0v_1v_2$.  Remove $v_1$ and $v_2$ from $G$ and remove their colors according
to $\vf$ from the lists of their neighbors, obtaining a graph $G'$ with the list assignment $L'$.  Since every triangle in $G$ bounds a face,
at most one vertex in $G'$ has list of size three.  Since $p_0$, $p_1$ and $p_2$ have no common neighbor and $p_0$ is not adjacent to $p_2$,
$G'$ with the list assignment $L'$ is a smaller counterexample to Theorem~\ref{thm-basic}, which is a contradiction.

Let $F=p_2p_1p_0v_1v_2v_3v_4\ldots$.  If $\ell(F)=6$, then we set $v_4=p_2$.
We may assume that $|L(v_1)|=3$ or $|L(v_2)|=3$, since otherwise we can remove a color from the list of $v_1$.
Let us consider a set $X\subseteq V(F)\setminus V(P)$ and a partial $L$-coloring $\vf$ of $X$ that are defined as follows:
\begin{itemize}
\item[(X1)] If $|L(v_1)|=3$ and $|L(v_3)|\neq 3$, then $X=\{v_1\}$ and $\vf(v_1)\in L(v_1)\setminus L(p_0)$ is chosen arbitrarily.
\item[(X2)] If $|L(v_1)|=3$ and $|L(v_3)|=3$, then $X=\{v_1,v_2\}$ and $\vf$ is chosen so that $\vf(v_2)\in L(v_2)\setminus L(v_3)$
and $\vf(v_1)\in L(v_1)\setminus(L(p_0)\cup \{\vf(v_2)\})$.
\item[(X3)] If $|L(v_2)|=3$, and either $|L(v_4)|\neq 3$ or $|L(v_3)|\geq 5$, then $X=\{v_2\}$ and $\vf(v_2)\in L(v_2)$ is chosen arbitrarily.
\item[(X4)] If $|L(v_2)|=3$, $|L(v_3)|=4$ and $|L(v_4)|=3$, then:
\begin{itemize}
\item[(X4a)] If $v_1$, $v_2$ and $v_3$ do not have a common neighbor or $|L(v_1)|\ge5$, then $X=\{v_2, v_3\}$ and $\vf$ is chosen
so that $\vf(v_3)\in L(v_3)\setminus L(v_4)$ and $\vf(v_2)\in L(v_2)\setminus \{\vf(v_3)\}$.
\item[(X4b)] If $v_1$, $v_2$ and $v_3$ have a common neighbor and $|L(v_1)|=4$, then $X=\{v_1, v_2, v_3\}$ and $\vf$ is chosen
so that $\vf(v_3)\in L(v_3)\setminus L(v_4)$, $\vf(v_1)\in L(v_1)\setminus L(p_0)$ and either at least one of $\vf(v_1)$ and $\vf(v_3)$
does not belong to $L(v_2)$, or $\vf(v_1)=\vf(v_3)$.  The vertex $v_2$ is left uncolored.
\end{itemize}
\end{itemize}
For later reference, Figure \ref{fig-definition_of_X} shows the subcases used in the definition of $X$ and $\vf$.

\begin{figure}
\begin{center}
\includegraphics[width=100mm]{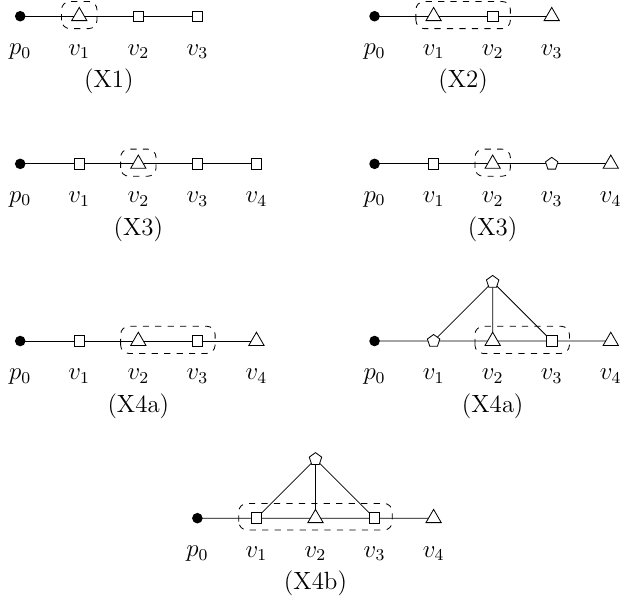}
\end{center}
\caption{Subcases in the definition of $X$. Triangle vertices represent lists of size 3, square vertices list of size $\ge4$. Encircled vertices are in $X$.}
\label{fig-definition_of_X}
\end{figure}

Let $G'=G-X$ and let $L'$ be the list assignment obtained from $L$ by removing the colors of the vertices of $X$ according to $\vf$ from the lists of their
neighbors (if a vertex of $X$ is not colored according to $\vf$, we do not remove any colors for it).  Observe that any $L'$-coloring of $G'$
can be extended to an $L$-coloring of $G$, thus $G'$ is not $L'$-colorable.  By the minimality of $G$, this implies that $G'$ violates the assumptions
of Theorem~\ref{thm-basic}.  Since $F$ has no chords, the choice of $X$ and $\vf$ implies that every vertex of $V(G')\setminus V(P)$ has list of size at least three.
Since $p_0$ is not adjacent to $p_2$, and since $p_0$, $p_1$, and $p_2$ do not have a common neighbor in $G$, it follows that the conditions
(v) and (vi) are satisfied by $G'$ with the list assignment $L'$.  We conclude that (iv) is false, i.e., $G'$ contains adjacent vertices $u$ and $v$
such that $|L'(u)|=|L'(v)|=3$.

Since $F$ has no chords, the choice of $X$ ensures that at most one of $u$ and $v$ belongs to $V(F)$;
hence, we can assume that $v\not\in V(F)$ and $v$ has two neighbors in $\dom(\vf)$.  In particular, $X$ and $\vf$ were chosen according to the cases
(X2) or (X4).  Since $G$ contains no separating cycles of length at most $4$,
we conclude that $u$ has at most one neighbor in $\dom(\vf)$.  Since $|L'(u)|=3$, it follows that $u\in V(F)$.
Let $x\in X$ be the neighbor of $v$ such that the
distance between $u$ and $x$ in $F-P$ is as large as possible.  By \refclaim{cl-bas-2chord} applied to the $2$-chord $xvu$,
we conclude that the $xvu$-component of $G$ that does not contain $P$ consists of $xvu$ and a vertex $z$ adjacent to $x$, $v$ and $u$
with $|L(z)|=3$.  It follows that $|L(u)|>3$, and since $|L'(u)|=3$, we have $z\in X$ and $|L(u)|=4$.  The inspection of the choice
of $X$ shows that (X4) holds, i.e., $u=v_1$, $z=v_2$ and $x=v_3$.  However, note that the condition of (X4b) holds; hence $u\in X$,
contrary to the assumption that $u\in V(G')$.  This completes the proof of Theorem~\ref{thm-basic}.
\end{proof}

\section{Drawings with crossings}\label{sec-term}

In this section, we introduce terminology concerning graphs drawn in the plane with crossings.

Let $G$ be a graph.
A {\em drawing $\cal G$} of a graph $G$ in the plane consists of a set
${\cal V}=\{p_v \mid v\in V(G)\}$ of distinct points in the plane and a set of simple polygonal curves ${\cal E}=\{c_e \mid e\in E(G)\}$ such that
\begin{itemize}
\item if $uv\in E(G)$, then $p_u$ and $p_v$ are the endpoints of $c_{uv}$,
\item no internal point of any $c_e\in {\cal E}$ belongs to ${\cal V}$, and
\item any point in the plane that does not belong to ${\cal V}$ is contained in at most two of the curves in ${\cal E}$, and any two curves in ${\cal E}$ have at most one point in common.
\end{itemize}
A {\em crossing} of ${\cal G}$ is a point in the plane that belongs to two of the curves in ${\cal E}$, but not to ${\cal V}$.
An edge $e$ is {\em incident with the crossing $x$} if $x\in c_e$.
An edge $e$ is {\em crossed} if it is incident with some crossing, and {\em non-crossed} otherwise.
For a crossing $x$, we define $G_x$ to be the graph consisting of the two edges incident with $x$.
Two vertices of $G$ are {\em crossing-adjacent} if they belong to $G_x$ for some crossing $x$ and are not adjacent in $G_x$.
By a slight abuse of terminology, we will generally use $e$ to refer to the curve $p_e$ and $v$ to refer to the point $p_v$
in the drawing, and we will refer by $G$ to both the graph $G$ and its drawing ${\cal G}$.

Removal of $\bigcup {\cal E}$ splits the plane into several connected subsets, which we call {\em faces} of $G$. Slightly abusing the terminology again,
we sometimes identify a face with its boundary and hence speak about the vertices, edges and crossings of the face.
Let us remark that due to the existence of crossings, the boundary of a face may have somewhat more complicated structure than in the plane case.
Let $u$ and $v$ be two consecutive vertices in the boundary of a face $f$.  The part of the boundary of $f$ between $u$ and $v$ is either formed by an edge (which is non-crossed),
or by parts of two crossing edges $ux$ and $vy$ (and $u$ is then crossing-adjacent to $v$).  In the latter case, $x$ and $y$ may or may not be incident with $f$ themselves
(and even if say $x$ is incident with $f$, there may be a part of the edge $ux$ that is not contained in the boundary of $f$).
Let us remark that if both parts of edges $ux$ and $vy$ between the crossing and $x$ and $y$ are contained in the boundary of $f$,
then we can redraw $G$ to eliminate the crossing by ``flipping'' the part of the drawing of $G$ that contains $v$ and $x$.

By a \emph{chord} or \emph{$1$-chord} of the face $f$, we mean an edge $e$ of $G$ such that both endpoints of $e$ are contained in the boundary of $f$,
and either $e$ is crossed, or $e$ is non-crossed and not contained in the boundary of $f$.
A \emph{$k$-chord} of $f$ for $k\ge 2$ is a path $v_0v_1\ldots v_k$ such that $v_0$ and $v_k$ are contained in the boundary of $f$ and
$v_i$ is not contained in the boundary of $f$ for $i=1,\ldots, k-1$,
and the edges of the path do not cross each other.  Note that edges of a $k$-chord may or may not be crossed by other
edges not belonging to the $k$-chord.

For a cycle $K$ whose edges do not cross each other in $G$, let
$\cin_K(G)$ denote the subgraph of $G$ consisting of the vertices and edges drawn fully inside the closed disc bounded by $K$ (i.e., excluding the
edges crossing with the edges of $K$), and let $\cex_K(G)$ denote the subgraph of $G$ obtained by removing all vertices and edges
intersecting the open disc bounded by $K$ (so again, $\cex_K(G)$ does not contain the edges crossing with the edges of $K$).

Suppose that $G$ is $2$-connected, and consider a $k$-chord $Q$ of the outer face $F$ of $G$.
Let $c$ be a closed curve consisting of $Q$ and of an arbitrary curve in the interior of $F$ joining the endpoints of $Q$.
Let $G_1$ be the subgraph of $G$ consisting of the vertices and edges drawn fully inside the closed disc bounded by $c$,
and let $G_2$ be the subgraph of $G$ obtained by removing all vertices and edges
intersecting the open disc bounded by $c$ (so neither $G_1$ nor $G_2$ contains the edges of $G$ that cross $Q$).
We say that $G_1$ and $G_2$ are the \emph{$Q$-components} of $G$ (note that since $G$ is $2$-connected, the $Q$-components
are uniquely determined, independently of the exact choice of the curve $c$).

\section{Near-planar graphs}\label{sec-main}

We now aim to show that graphs drawn in the plane with crossings far apart are $5$-choosable.  For the purposes of
the induction, it will be useful to allow other kinds of irregularities (adjacent vertices with list of size three, as well as vertices
with list of size four not incident with the outer face, which arise when some vertices incident with a crossing are colored
and their colors are removed from the lists of their neighbors), subject to distance constraints.
This results in a rather technical statement, and we devote this section to its exact formulation.

Let us fix a drawing of a graph $G$ in the plane, possibly with crossings.
Let $P$ be a path of length at most three contained in the boundary of the outer face $F$ of
$G$ (where in particular, no edge of $P$ is crossed), let $N$ be a subset of $V(G)$ and $M$ a subset of $E(G)$, and let $L$ be a list assignment
for $G$ (generally, the set $N$ will contain vertices with list of size $4$, while the edges of $M$ will join vertices with lists of size three).
We say that a $5$-tuple $(G,P,N,M,L)$ is a \emph{target}.

\begin{figure}
\begin{center}
\includegraphics[width=110mm]{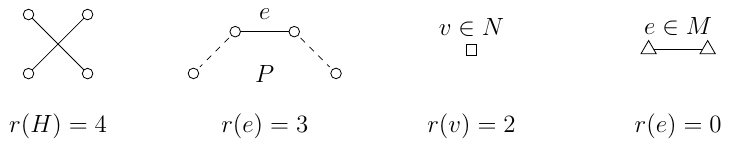}
\end{center}
\caption{Special subgraphs and their ranks.}
\label{fig-special}
\end{figure}
For a target $(G,P,N,M,L)$, we define some subgraphs $H$ of $G$ to be {\em special}, and assign a {\em rank $r(H)$} to each such subgraph (see Figure~\ref{fig-special}).  Specifically, $H$ is {\em special\/} if it falls into one of the following four cases:
\begin{itemize}
\setlength{\itemsep}{0pt}
\item $H$ consists of the two edges incident with a crossing. In this case, its rank is $4$.
\item $P$ has length three and $H$ consists of the middle edge of $P$; the rank of $H$ is $3$.
\item $H$ is equal to a vertex of $N$, and $r(H)=2$.
\item $H$ is equal to an edge of $M$, and $r(H)=0$.
\end{itemize}
For two subgraphs $H_1,H_2\subseteq G$, the {\em distance $d(H_1,H_2)$}
between $H_1$ and $H_2$ is the minimum of the distances between the vertices of $H_1$ and $H_2$, i.e., the minimum $k$
such that there exists a path $v_0v_1\ldots v_k$ in $G$ with $v_0\in V(H_1)$ and $v_k\in V(H_2)$.

\begin{figure}
\begin{center}
\includegraphics[width=120mm]{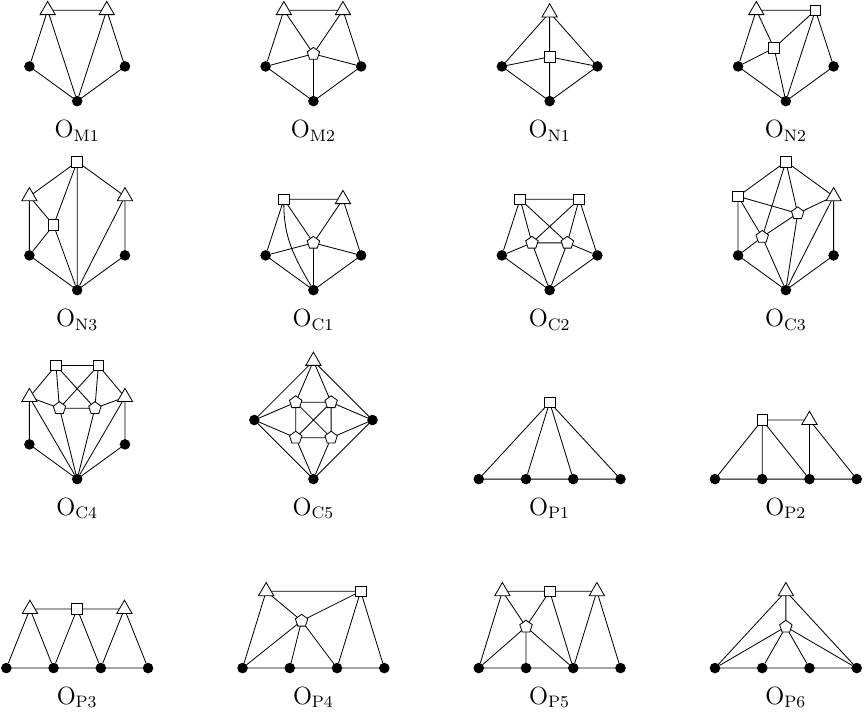}
\end{center}
\caption{The obstructions used in Theorem~\ref{thm-maingen}.}
\label{fig-obst}
\end{figure}

Given a target $(G,P,N,M,L)$, a subgraph $O\subseteq G$ is an {\em obstruction} if $O$ (with its drawing induced from the drawing of $G$) is isomorphic to one of the graphs drawn in
Figure~\ref{fig-obst} and sizes of the lists of its vertices match those prescribed by the figure, where the full-circle vertices have list of size one,
triangle vertices have list of size three, square vertices have list of size four and pentagonal vertices have list of size five.
Note that each obstruction contains a special subgraph of $G$.

We say that the target $(G, P, N, M, L)$ is \emph{valid} if the following conditions are satisfied (with $F$ denoting the outer face of $G$):
\begin{itemize}
\item[(S)] $|L(v)|\ge 5$ for $v\in V(G)\setminus (V(F)\cup N)$, $|L(v)|\ge 3$ for $v\in V(F)\setminus V(P)$ and $|L(v)|=1$ for $v\in V(P)$,
\item[(N)] $|L(v)|\ge 4$ for $v\in N\setminus V(F)$,
\item[(M)] if $|L(u)|=|L(v)|=3$ and $u$ and $v$ are adjacent, then $uv\in M$,
\item[(P)] $L$ gives a proper coloring to the subgraph induced by $V(P)$,
\item[(T)] if a vertex $v$ has three neighbors $w_1,w_2,w_3$ in $V(P)$, then $L(v)\neq L(w_1)\cup L(w_2)\cup L(w_3)$,
\item[(C)] if $x$ is a crossing and $G_x$ contains a vertex with list of size three, then all other vertices of $G_x$ have lists of size $1$ or $\ge5$,
\item[(D)] $d(H_1,H_2)\ge r(H_1)+r(H_2)+7$ for every pair $H_1\neq H_2$ of special subgraphs of $G$, and
\item[(O)] if the target contains an obstruction $O$, then $O$ is $L$-colorable.
\end{itemize}

We prove the following claim, which implies our main result, Theorem~\ref{thm-main0} (since each graph satisfying the assumptions
of Theorem~\ref{thm-main0} can be transformed into a legal target in a natural way, with $P$ consisting of a single vertex so that
the target contains no obstructions).

\begin{theorem}\label{thm-maingen}
If $(G, P, N, M, L)$ is a valid target, then $G$ is $L$-colorable.
\end{theorem}

We sometimes refer to (D) as the {\em distance condition}.
The purpose of the introduced rank function is the following. In our inductive arguments, we will occasionally
construct a smaller graph $G'$ and introduce a new special subgraph $H'$ in a vicinity of a special
subgraph $H$ that would no longer exist in $G'$. If $H'$ has smaller rank than $H$, then the distance condition
for special subgraphs in $G'$ would still hold, and the induction hypothesis can be applied.

Let us remark that if the distance condition holds, then $G$ can contain at most one of the obstructions.
For further reference we exhibit in Figure \ref{fig-badlists} all possible list assignments for which the obstructions are not colorable.
In particular, observe that the following holds:

\claim{cl-colprop}{Let $H$ be one of the obstructions and let $Q$ be the path in $H$ consisting of full-circle vertices.
Suppose that $Q$ has length two and that $H$ is neither\/ \ob{M1} nor\/ \ob{C1}.  Let $q$ be the middle vertex of $Q$ and let $L$ be a list assignment
such that each vertex $v$ drawn by a $k$-gon has $|L(v)|=k$, while the vertices of $Q$ have lists consisting of all possible colors.
Then there exists a color $b$ such that every $L$-coloring $\psi$ of $Q$ with $\psi(q)\neq b$ extends to an $L$-coloring of~$H$.}

\begin{figure}
\begin{center}
\includegraphics[width=140mm]{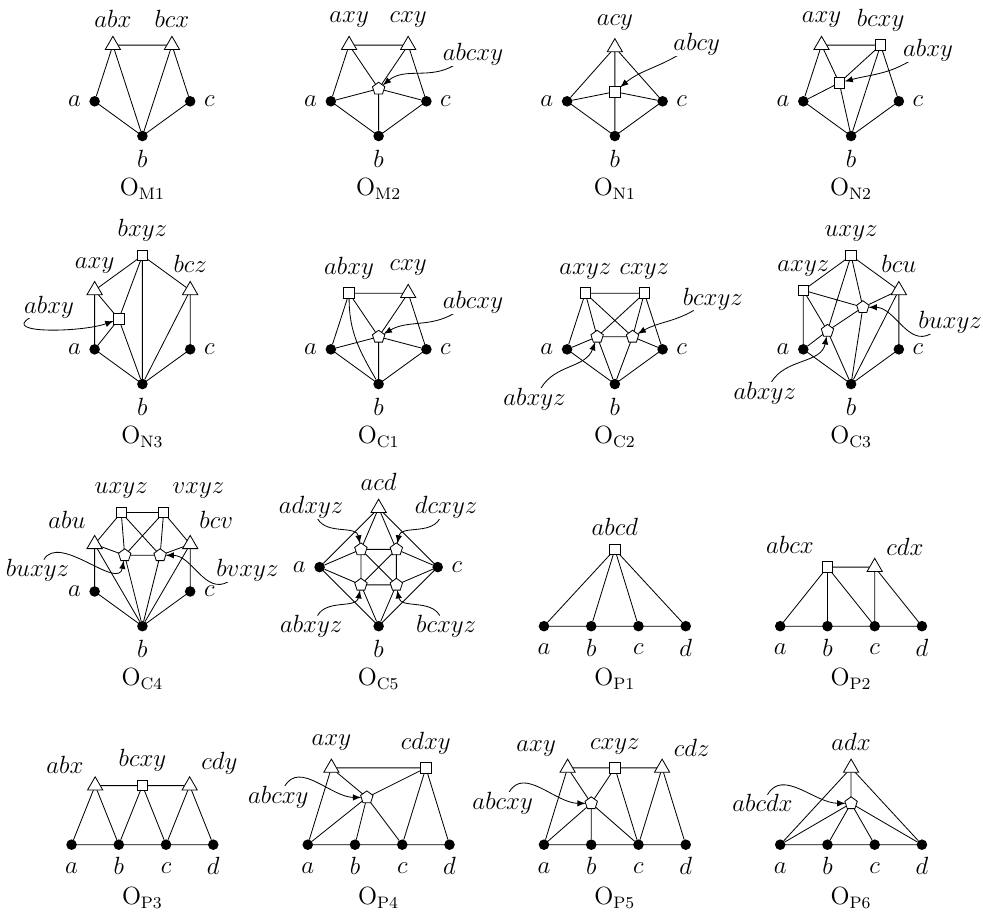}
\end{center}
\caption{The lists for which the obstructions cannot be colored. Colors represented by different letters may be equal to each other if they
do not occur in the same list for a particular obstruction.}
\label{fig-badlists}
\end{figure}

Let us now give a quick outline of the proof of Theorem~\ref{thm-maingen}; we work out all the details in the following section.
Essentially, we follow the proof of Theorem~\ref{thm-basic}.  First,
we show that the outer face of a hypothetical minimal counterexample $G$ has no chords and then we also restrict its $2$-chords.  This is somewhat
more complicated due to the presence of crossings and the condition (O).  Next, we find the set $X$ and its partial coloring $\vf$ defined in the same way as in the proof of Theorem~\ref{thm-basic},
and use it to construct the graph $G'$ with the list assignment $L'$.  By the minimality of $G$, we conclude that $G'$ violates
one of the assumptions of the theorem.  A straightforward case analysis shows that (O) holds, the definition of the rank function
ensures that (D) holds, and the conditions (S), (P) and (T)
follow in the same way as in the proof of Theorem~\ref{thm-basic}; but (M), (N) and (C) can be violated in ways which
do not enable us to obtain a contradiction directly.  However, we observe that in such a case, there is a special subgraph $S$
near to $X$.  In this situation, we apply the symmetric argument on the other side of the path $P$, and obtain another set $X'$
and a special subgraph $S'$ close to it.  By the distance condition, we have $S=S'$, and thus there exists a short
path from $X$ to $X'$ passing through $S$.  In this situation, we consider all the possible combinations of $X$, $X'$
and their positions relatively to $S$, and obtain a contradiction similarly to the way we deal with $2$-chords.

Let us note that the assumption (C) is a product of a somewhat delicate tradeoff.  We believe the claim still essentially
holds even without this assumption, and avoiding it would greatly reduce the number of possible bad cases and simplify the
last part of the proof.  However, the list of obstructions in (O) would be significantly larger, making the first part of
the proof longer and more complicated.  Moreover, if we omit (C) completely, then there exists an obstruction
with a precolored path of length one (see Figure~\ref{fig-needc}(a)), which would be a major problem (we could not easily get rid
of chords of $F$).
One could consider excluding Figure~\ref{fig-needc}(a) by forbidding vertices with lists of sizes three or four joined by a crossed edge.
This would still simplify the last part of the proof a lot.  However, in addition to having more than 10 new obstructions,
we do not see a way how to reduce the $2$-chord depicted in Figure~\ref{fig-needc}(b), which would need to be dealt with somehow.

\begin{figure}
\begin{center}
\includegraphics[width=70mm]{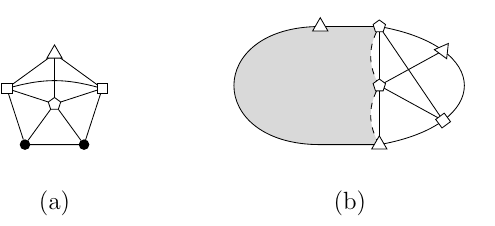}
\end{center}
\caption{Why is condition (C) needed?}
\label{fig-needc}
\end{figure}

Before we proceed with the proof of Theorem~\ref{thm-maingen}, let us show how it implies Theorem~\ref{thm-2x}.

\begin{proof}[Proof of Theorem~\ref{thm-2x}]
Let $G$ be a graph with crossing number at most two. We may assume that $G$ is nonplanar.
Consider a drawing of $G$ in the plane with one or two crossings and let $L$ be a list assignment such that each vertex has five admissible colors.
Let $xy$ and $uv$ be two edges crossing each other at the crossing $q$. Suppose first that the edges $xy$ and $uv$ do not participate in another crossing.
Now remove the two edges and add the edges $xu, uy, yv$, and $vx$ (if they are not already present). This gives rise to a graph $G'$ with at most one crossing,
and we can redraw it so that the cycle $xuyv$ bounds the outer face.  Let $L'$ be a list assignment such that $L'(x)\subset L(x)$, $L'(y)\subset L(y)$, and $L'(u)\subset L(u)$
are pairwise distinct sets of size $1$, $L'(v)=L(v)\setminus L'(u)$, and $L'(z)=L(z)$ for $z\in V(G)\setminus \{u,v,x,y\}$.
Theorem \ref{thm-maingen} implies that $G'$ has an $L'$-coloring $\vf$, and the choice of $L'$ ensures that $\vf$ is also an $L$-coloring of $G$.

If the edge $uv$ participates in another crossing, then $xy$ does not participate in another one.
Suppose that the segment of $uv$ from $u$ to the crossing $q$ does not contain the other crossing. Then we proceed similarly as above:
we remove the edges $xy$ and $uv$ and add edges $xu$ and $uy$. The resulting graph $G'$ is planar and the path $P=xuy$ is part of a facial walk.
Let $L'$ be defined as before; by Theorem \ref{thm-maingen} with $N=\{v\}$, $G'$ is $L'$-colorable, and thus $G$ is $L$-colorable.
\end{proof}

\section{The proof of Theorem~\ref{thm-maingen}}\label{sec-maingen}

We say that a target $(G,P,N,M,L)$ is a \emph{counterexample} if it is valid and $G$ is not $L$-colorable.
We proceed by contradiction, showing that no counterexample exists.
A counterexample $(G,P,N,M,L)$ is \emph{minimal} if there exists no counterexample $(G',P',N',M',L')$ satisfying
\begin{itemize}
\item $|V(G')|+|E(G')|<|V(G)|+|E(G)|$, or
\item $|V(G')|+|E(G')|=|V(G)|+|E(G)|$ and $\sum_{v\in V(G')} |L'(v)|<\sum_{v\in V(G)} |L(v)|$.
\end{itemize}

We follow the outline of the proof of Theorem~\ref{thm-basic}.  Let us first start by establishing some basic properties of
hypothetical minimal counterexamples.

\begin{lemma}\label{lemma-cl-basic}
Every minimal counterexample $B=(G,P,N,M,L)$ with the outer face $F$ has the following properties.
\begin{itemize}
\setlength{\itemsep}{0pt}
\item[\rm (a)] Every vertex $v\in V(G)$ satisfies $\deg(v)\ge |L(v)|$.
\item[\rm (b)] $G$ is $2$-connected and $\ell(P)\ge1$.
\item[\rm (c)] Every non-crossed chord of $F$ is incident with exactly one vertex of $P$, and this vertex is internal in $P$.
\item[\rm (d)] If $K$ is a triangle in $G$ and no edge of $K$ is crossed, then $K$ is not separating. If $K$ is a separating $4$-cycle without crossed edges,
then $\cin_K(G)-V(K)$ is either a vertex in $N$ or a complete graph on $4$ vertices
involving a crossing.
\item[\rm (e)] Every vertex $v\in V(G)$ satisfies $|L(v)|\le5$.
\item[\rm (f)] If $v\in V(G)\setminus V(P)$ is adjacent to a vertex $p\in V(P)$, then $L(p)\subseteq L(v)$.
\end{itemize}
\end{lemma}
\begin{proof}
For (a), if $v\in V(G)$ has degree less than $|L(v)|$, then every $L$-coloring of $G-v$ extends to an $L$-coloring of $G$, and thus the target obtained
from $B$ by removing $v$ is a counterexample contradicting the minimality of $B$.

For (f), it suffices to note that if $vp\in E(G)\setminus E(P)$ and $L(p)\cap L(v)=\emptyset$, then removing $e$ would result in a smaller counterexample.
Together with the property (P), this also implies that no chord of $F$ joins two vertices of $P$.

Hence, for (c), it suffices to consider a non-crossed chord $uv$ of $G$ that is not incident with an internal vertex of $P$.  Let $G_1$ and $G_2$ be the $uv$-components
of $G$ such that $P\subseteq G_1$.   By the minimality of $B$, there exists an $L$-coloring $\vf$ of $G_1$.  Let $L'(u)=\{\vf(u)\}$, $L'(v)=\{\vf(v)\}$
and $L'(x)=L(x)$ for $x\in V(G_2)\setminus \{u,v\}$.  Since $G$ is not $L$-colorable, it follows that $G_2$ is not $L'$-colorable, and thus
$(G_2,uv,N\cap V(G_2), M\cap E(G_2), L')$ is a counterexample contradicting the minimality of $B$.

The claim (b) and the first part of claim (d) (concerning triangles) is proved similarly to the previous paragraph.
For the second part of (d), let $K=v_1v_2v_3v_4$ be a separating $4$-cycle without crossed edges.  If all vertices of $K$
have a common neighbor $x\in N\cap V(\cin_K(G))$, then by the first part of (d), $\cin_K(G)-V(K)$ consists only of $x$.
Otherwise, we can choose the labels of the vertices of $K$ so that $v_4$ is not adjacent to a vertex in $N\cap V(\cin_K(G))$.  Furthermore, using (D),
we can choose the labels so that $v_4$ is not incident with a crossed edge other than possibly the chord $v_4v_2$.
By the minimality of $B$, there exists an $L$-coloring $\vf$ of the subgraph of $G$ consisting of $\cex_K(G)$ and all chords of $K$.
Consider the graph $G'=\cin_K(G)-v_4$ with the list assignment $L'$ given by $L'(v_i)=\{\vf(v_i)\}$ for $i=1,2,3$, $L'(x)=L(x)\setminus \{\vf(v_4)\}$
for all neighbors $x$ of $v_4$ distinct from $v_1$, $v_2$, and $v_3$, and $L'(x)=L(x)$ for all other vertices $x$ of $G'$.  Since $G$ is not $L$-colorable,
it follows that $G'$ is not $L'$-colorable, and by the minimality of $B$, the target $B'=(G', v_1v_2v_3, N\cap V(G'), \emptyset, L')$ is not
valid.  By the choice of $v_4$, we have $|L'(x)|\ge 4$ for all $x\in V(G')\setminus\{v_1,v_2,v_3\}$, and thus the target $B'$ may only violate (O);
and if it violates (O), then $B'$ contains the obstruction \ob{C2}.  However, together with the first part of (d), this implies that $\cin_K(G)-V(K)$ is a complete graph on $4$ vertices
involving a crossing.

Property (e) is achieved by removing colors from lists of size 6 or more. The only problem that may arise is that we obtain an obstruction; however,
inspection of bad lists for the obstructions exhibited in Figure \ref{fig-badlists} shows that we can always remove one of the colors so that (O) still holds.
\end{proof}

Next, we strengthen Lemma~\ref{lemma-cl-basic}(d) for triangles.

\begin{lemma}\label{lemma-cl-noseptr}
Let $B=(G,P,N,M,L)$ be a minimal counterexample and let $T$ be a triangle in $G$ (possibly with a crossed edge).
If $T$ does not bound the outer face of $G$, then $V(\cin_T(G))=V(T)$.
\end{lemma}
\begin{proof}
Let $T=v_1v_2v_3$. The claim follows by Lemma~\ref{lemma-cl-basic}(d) if no edge of $T$ is crossed; hence, suppose that say $v_1v_2$
is crossed by an edge $uw$, where the part of the edge $uw$ next to $w$ is drawn in the open disk bounded by $T$.
Let $G_1=\cex_T(G)+uw$. Suppose for a contradiction that $V(\cin_T(G))\neq V(T)$, and thus $G_1\neq G$ (since $\deg(w)>1$ if $w\neq v_3$ by Lemma~\ref{lemma-cl-basic}(a)).
By the minimality of $B$, there exists an $L$-coloring $\vf$ of $G_1$.
Let $G_2=\cin_T(G)$ and let $L_2$ be the list assignment such that $L_2(v_i)=\{\vf(v_i)\}$ for $i=1,2,3$,
$L_2(w)=L(w)\setminus \{\vf(u)\}$ if $w\neq v_3$, and let $L_2(z)=L(z)$ for $z\in V(G_2)\setminus \{v_1,v_2,v_3,w\}$.
Since $G$ is not $L$-colorable, $G_2$ is not $L_2$-colorable.
Let $N_2=N\cap V(G_2)$ if $w=v_3$, and $N_2=(N\cap V(G_2))\cup \{w\}$ otherwise.
Then $(G_2,v_1v_2v_3, N_2, \emptyset,L_2)$ is a counterexample contradicting the minimality of $B$.
\end{proof}

We get the following easy corollary.

\begin{lemma}\label{lemma-cl-noobstr}
No minimal counterexample $(G,P,N,M,L)$ contains an obstruction.
\end{lemma}
\begin{proof}
Suppose for a contradiction that $H\subseteq G$ is an obstruction.  Note that $H$ contains a special subgraph, and by (D), if $e\in E(H)$ is not crossed in $H$,
then $e$ is not crossed in $G$, either.  Furthermore, no vertex of the outer face of $H$ belongs to $N$, and thus all of them are incident
with the outer face of $G$. Lemmas~\ref{lemma-cl-basic}(c) and \ref{lemma-cl-noseptr} imply that $G=H$.  However, then $G$ is $L$-colorable by (O),
which is a contradiction.
\end{proof}

Analogously, we can strengthen Lemma~\ref{lemma-cl-basic}(d) for $4$-cycles.

\begin{lemma}\label{lemma-cl-nosepq}
Let $B=(G,P,N,M,L)$ be a minimal counterexample and let $K$ be a $4$-cycle in $G$ (possibly with a crossed edge).
If $K$ does not bound the outer face of $G$ and $V(\cin_K(G))\neq V(K)$, then one of the following holds:
\begin{itemize}
\item $\cin_K(G)-V(K)$ is isomorphic to $K_4$ drawn with a crossing, or
\item $V(\cin_K(G))=V(K)\cup\{z\}$ for a vertex $z$ adjacent to all vertices of $K$ such that either $z\in N$ or
$z$ is incident with an edge crossing an edge of $K$.
\end{itemize}
\end{lemma}
\begin{proof}
Let $K=v_1v_2v_3v_4$.  If no edge of $K$ is crossed, then the claim follows from Lemma~\ref{lemma-cl-basic}(d);
hence, suppose that say $v_3v_4$ is crossed by an edge $uz$, where the part of the edge $uz$ next to $z$ is drawn in the open disk bounded by $K$.
If $\cin_K(G)$ contained a chord of $K$, then $V(\cin_K(G))=V(K)$ by Lemma~\ref{lemma-cl-noseptr}; hence, we can assume that $K$ is an induced
cycle in $\cin_K(G)$.

Let $G_1=\cex_K(G)+uz$. Suppose that $V(\cin_K(G))\neq V(K)$, and thus $G_1\neq G$ (since $\deg(z)>1$ if $z\not\in V(K)$ by Lemma~\ref{lemma-cl-basic}(a)).
By the minimality of $B$, there exists an $L$-coloring $\vf$ of $G_1$.
Consider the graph $G'=\cin_K(G)-v_4$ with the list assignment $L'$ given by $L'(v_i)=\{\vf(v_i)\}$ for $i=1,2,3$,  $L'(z)=L(z)\setminus \{\vf(u),\vf(v_4)\}$
if $z\not\in \{v_1,v_2\}$ and $zv_4\in E(G)$, $L'(z)=L(z)\setminus \{\vf(u)\}$ if $z\not\in \{v_1,v_2\}$ and $zv_4\not\in E(G)$,
$L'(x)=L(x)\setminus \{\vf(v_4)\}$ for all neighbors $x$ of $v_4$
distinct from $v_1$, $v_2$, $v_3$, and $z$, and $L'(x)=L(x)$ for all other vertices $x$ of $G'$.
Since $G$ is not $L$-colorable, it follows that $G'$ is not $L'$-colorable, and by the minimality of $B$, the target $B'=(G', v_1v_2v_3, N\cap V(G'), \emptyset, L')$ is not
valid.  Since the edge $v_3v_4$ is crossed, the distance condition for $B$ implies that $B'$ does not contain any obstruction, and thus it satisfies (O).
Observe that $B'$ satisfies all other conditions of validity except possibly for (T).  If $B'$ violates (T), then $z\not\in V(K)$ and $z$ is adjacent to all vertices of $K$,
and by Lemma~\ref{lemma-cl-noseptr}, we have $V(\cin_K(G))=V(K)\cup\{z\}$.
\end{proof}

Next, let us consider separating $5$-cycles, in a somewhat restricted situation.

\begin{lemma}\label{lemma-cl-noseppen}
Let $B=(G,P,N,M,L)$ be a minimal counterexample and let $K$ be a $5$-cycle in $G$ such that no edge of $K$ is crossed,
$K$ does not bound the outer face of $G$, and $V(\cin_K(G))\neq V(K)$.  If $G$ contains a special subgraph $S$ such that $S\subseteq \cex_K(G)$
and $d(S,K)\le 1$, then $V(\cin_K(G))=V(K)\cup\{z\}$ for a vertex $z$ adjacent to all vertices of $K$.
\end{lemma}
\begin{proof}
Observe that $K=v_1v_2v_3v_4v_5$ is an induced cycle in $\cin_K(G)$, as otherwise Lemma~\ref{lemma-cl-nosepq} implies that
$\cin_K(G)$ contains a special subgraph at distance at most $1$ from $K$, which together with $S$ violates the assumption (D) for $B$.
By the minimality of $B$, there exists an $L$-coloring $\vf$ of $\cex_K(G)$.  Let $G'=\cin_K(G)-\{v_4,v_5\}$ with the list assignment $L'$ given by
$L'(v_i)=\{\vf(v_i)\}$ for $i=1,2,3$ and $L'(x)=L(x)\setminus\{\vf(v_i):i\in\{1,2\}, xv_i\in E(G)\}$ for $x\in V(G')\setminus\{v_1,v_2,v_3\}$.  Since $G$ is not $L$-colorable,
it follows that $G'$ is not $L'$-colorable, and by the minimality of $B$, the target $B'=(G', v_1v_2v_3, N\cap V(G'), \emptyset, L')$ is not
valid.  Note that by the distance condition for $S$ and by Lemma~\ref{lemma-cl-noseptr}, at most one vertex $z$ of $G'$ satisfies $|L'(z)|=3$ and $B'$ satisfies (O).
Observe that $B'$ satisfies all other conditions of validity except possibly for (T).  If $B'$ violates (T), then $z$ is adjacent to all vertices of $K$, and
by Lemma~\ref{lemma-cl-noseptr}, we have $V(\cin_K(G))=V(K)\cup\{z\}$.
\end{proof}

Our next goal is to show that the outer face of a minimal counterexample does not have chords.
Let us start with excluding non-crossed chords.

\begin{lemma}\label{lemma-cl-nochord-nocr}
If $B=(G,P,N,M,L)$ is a minimal counterexample and $F$ is the outer face of $G$, then every chord of $F$ is crossed.
\end{lemma}
\begin{proof}
Let $k=\ell(P)$, and let $P=p_0p_1\ldots p_k$. 
Suppose for a contradiction that $F$ has a non-crossed chord $uv$. By Lemma~\ref{lemma-cl-basic}(c), $u$ is an internal vertex
of $P$, say $u=p_1$, while $v\not\in V(P)$.  Let $G_1$ and $G_2$ be the $uv$-components of $G$ such that $p_0\in V(G_1)$. Let $P_1=p_0p_1v$ and
$P_2=vp_1\ldots p_k$. For each color $c\in L(v)\setminus L(u)$, let $L_c$ be the list assignment such that $L_c(v)=\{c\}$ and $L_c(z)=L(z)$ if $z\neq
v$.  Since $G$ is not $L$-colorable, either $G_1$ or $G_2$ is not $L_c$-colorable.  Furthermore, since both $G_1$ and $G_2$ are $L$-colorable (by the
minimality of $B$), there exist distinct colors $c_1$ and $c_2$ such that $G_1$ is not $L_{c_1}$-colorable and $G_2$ is not $L_{c_2}$-colorable.
For $i\in\{1,2\}$, let $B_i=(G_i,P_i,N\cap V(G_i), M\cap E(G_i), L_{c_i})$.
By the minimality of $B$, neither $B_1$ nor $B_2$ is a valid target.
Clearly, both $B_1$ and $B_2$ satisfy (S), (N), (M), (C), and (D).  Hence, each of them violates (P), (T), or (O).

We claim that we can choose the labeling of $P$ and the chord $uv=p_1v$ so that with the notation as in the previous paragraph,
the following holds.
\begin{itemize}
\item[(i)] $B_1$ does not contain any obstruction,
\item[(ii)] no edge at distance at most two from $p_1$ in $G_1$ is crossed, and
\item[(iii)] $p_0$, $p_1$, and $v$ do not have a common neighbor $z$ with $|L(z)|=3$.
\end{itemize}
Indeed, if $B_1$ contains an obstruction, then it contains a special subgraph whose distance to $p_1$ is at
most two.  In that case, (D) implies that $\ell(P)=2$ and that $B_2$ contains no obstruction.  Hence, we can exchange
the labels of $p_0$ with $p_2$, $G_1$ with $G_2$, \ldots, and $B_1$ with $B_2$, so that (i) holds.
Similarly, if an edge at distance at most two from $p_1$ in $G_1$ is crossed, then (D) implies that $\ell(P)=2$, $B_2$ contains no obstruction,
and no edge at distance at most two from $p_1$ in $G_2$ is crossed, and thus exchanging the labels in the same way ensures that both (i) and (ii) hold.
Finally, if $p_0$, $p_1$, and $v$ have a common neighbor $z\in V(G_1)$ with list of size three, then $z$ is incident with $F$ and by
(ii) and by Lemma~\ref{lemma-cl-basic}(c) and (d), we conclude that $V(G_1)=\{p_0,p_1,v,z\}$; in this case, we consider the chord $p_1z$ instead of the chord $p_1v$
(so that $G_1$ becomes equal to the triangle $p_0p_1v$), which ensures that (i), (ii), and (iii) hold.

By (i) and (iii), $B_1$ satisfies (O) and (T). Therefore, $B_1$ violates (P), and thus $p_0v\in E(G)$. By (ii) and by Lemma~\ref{lemma-cl-basic}(c) and (d),
we conclude that $G_1$ is equal to the triangle $p_0p_1v$.

Let $S=L(v)\setminus\bigcup_{p\in V(P), vp\in E(G)} L(p)$.
Recall that by Lemma~\ref{lemma-cl-noobstr}, $B$ does not contain the obstruction \ob{P1}. Together with the property (T) of $B$, this implies that $S\neq\emptyset$.
Consider any $c\in S$, and let $B_c=(G_2,P_2,N\cap V(G_2),M\cap E(G_2), L_c)$.  Since $G_1$ is $L_c$-colorable, we conclude that $G_2$ is not $L_c$-colorable,
and thus $B_c$ is not a valid target.  Note that $B_c$ satisfies (S), (N), (M), (C), and (D), and that $B_c$ satisfies (P) since $c\in S$.

If $B_c$ contained an obstruction $H$, then Lemmas~\ref{lemma-cl-basic}(c) and \ref{lemma-cl-noseptr} would imply that $G_2=H$.
However, the inspection of the obstructions shows that either $G=H\cup G_1$ would be $L$-colorable, or $B$ would contain an obstruction.
Therefore, $B_c$ satisfies (O).

It follows that $B_c$ violates (T), i.e., there exists a vertex $w$ adjacent to $v$ and to vertices $p,p'\in V(P)\setminus \{p_0\}$ such that
$L(w)=L(p)\cup L(p')\cup \{c\}$.  Since this holds for all $c\in S$, it follows that $|S|=1$.  If $|L(v)|=3$, then $vw\in M$, (D) for $B$ implies that
$\ell(P)=2$, and $B$ contains \ob{M1}, which contradicts Lemma~\ref{lemma-cl-noobstr}.  Hence, suppose that $|L(v)|\ge 4$, and since $|S|=1$,
$v$ has at least three neighbors in $P$.  If $\ell(P)=2$, then $|L(v)|=4$ and the edges $vp_2$ and $wp_1$ cross, which contradicts (C) for $G$.
Hence, $\ell(P)=3$, and by (D) for $B$, neither $v$ nor $w$ is incident with a crossed edge.  Therefore, $v$ is adjacent to $p_0$, $p_1$, and $p_2$,
$|L(v)|=4$, and $w$ is adjacent to $p_2$ and $p_3$.  However, this implies that $G$ contains \ob{P2}, which contradicts Lemma~\ref{lemma-cl-noobstr}.
\end{proof}

Next, we exclude crossed chords that are not incident with an internal vertex of $P$.

\begin{lemma}\label{lemma-cl-nochord-p}
If $B=(G,P,N,M,L)$ is a minimal counterexample and $F$ is the outer face of $G$, then every chord of $F$ is incident with an internal vertex of $P$.
\end{lemma}
\begin{proof}
Suppose for a contradiction that $F$ has a chord $uv$ of $F$ that is not incident with an internal vertex of $P$.
By Lemma~\ref{lemma-cl-nochord-nocr}, there exists an edge $e$ crossing the edge $uv$.
Let $G_1$ and $G_2$ be the $uv$-components of $G$ such that $P\subseteq G_1$.  Let $e=x_1x_2$, where $x_1\in V(G_1)$ and $x_2\in V(G_2)$.
By the minimality of $G$, there exists an $L$-coloring $\vf$ of $G_1$.  Since $\vf(u)\neq\vf(v)$, we can by symmetry assume that $\vf(x_1)\neq \vf(u)$.

Let $G'$ be the graph obtained from $G_2-uv$ by adding new vertices $y_1$ and $y_2$, edges of the path $P'=uy_1y_2v$ and the edge $y_1x_2$.
Let $L'$ be the list assignment for $G'$ such that $L'(u)=\{\vf(u)\}$, $L'(v)=\{\vf(v)\}$, $L'(y_1)=\{\vf(x_1)\}$, $L'(y_2)=\{c\}$ for a new color
$c$ that does not appear in any of the lists and $L'(z)=L(z)$ for any other vertex $z$.
Let $B'=(G',P',N\cap V(G_2), M\cap E(G_2), L')$.  

Note that the edge $y_1y_2$ is a special subgraph in $B'$ which does not appear in $B$.  However, $B'$ satisfies (D), since the crossing of $G$ incident with
$x_2$ does not belong to $G'$ and any path from a special subgraph in $G'$ to $y_1y_2$ passes through one of the vertices $u,v,x_2$ of the crossing in $G$.
Furthermore, $B'$ satisfies (O) since $y_2$ has degree two in $G'$.
Similarly, $B'$ satisfies all other conditions of validity except possibly for (T).

Since $G$ is not $L$-colorable, $G'$ is not $L'$-colorable, and thus $B'$ is not a valid target.  Hence, $B'$ violates (T).
This implies that $x_2$ has list of size three and it is adjacent to $u$ and $v$.  By Lemmas~\ref{lemma-cl-basic}(c) and \ref{lemma-cl-noseptr},
we have $V(G_2)=\{u,v,x_2\}$.  Note that by (C), we conclude that each of
$|L(u)|$, $|L(v)|$, and $|L(x_1)|$ is either 1 or 5.  Let $a$ be a color in $L(x_2)$ distinct from the colors of its neighbors in $P$, which exists by
(T).  Let $G''=G-x_2$ with the list assignment $L''$ such that $L''(z)=L(z)\setminus\{a\}$ for $z\in\{u,v,x_1\}$ and $L''(z)=L(z)$
otherwise.  Let $B''=(G'',P,N\cup \{x_1\}, M,L'')$, and observe that $G''$ is not $L''$-colorable and that $B''$ satisfies all the conditions of validity
except possibly for (O).

By the minimality of $G$, it follows that $B''$ violates (O).  Let $H$ be an obstruction in $B''$.  By Lemma~\ref{lemma-cl-noobstr},
$H$ is not an obstruction in $B$, and thus $V(H)\cap \{u,v,x_1\}\neq\emptyset$.  Thus, the distance between the special subgraph of $H$ and between
$\{u,v,x_1\}$ is at most $2$, and since $u$, $v$, and $x_1$ are incident with a crossing in $G$ that does not appear in $H$, the condition (D)
for $B$ implies that $x_1$ with $|L''(x_1)|=4$ is a special subgraph of $H$ and that $\ell(P)=2$.  Consequently, $B''$ contains
one of \ob{N1}, \ob{N2} or \ob{N3}, in which the interior vertex with list of size 4 is $x_1$.
However, inspection of these graphs shows that $|L''(u)|=3$ or $|L''(v)|=3$, and thus $|L(u)|\le 4$ or $|L(v)|\le 4$, which contradicts (C).
\end{proof}

Finally, let us exclude all the remaining chords.

\begin{lemma}\label{lemma-cl-nochord}
The outer face of a minimal counterexample has no chords.
\end{lemma}
\begin{proof}
Let $B=(G,P,N,M,L)$ be a minimal counterexample and let $F$ be the outer face of $G$.
Suppose for a contradiction that $F$ has a chord $uv$.  By Lemmas~\ref{lemma-cl-nochord-nocr} and \ref{lemma-cl-nochord-p},
$uv$ is crossed and incident with an internal vertex of $P$.  By (D), we conclude that $\ell(P)=2$; let $P=p_0p_1p_2$, where say $u=p_1$.
Let $e$ be the edge crossing $uv$ and let $G_1$ and $G_2$ be the $uv$-components of $G-e$ such that $p_0\in V(G_1)$ and $p_2\in V(G_2)$.
Let $P_1=p_0p_1v$ and $P_2=p_2p_1v$, and let $e=x_1x_2$, where $x_i\in V(G_i)$ for $i\in \{1,2\}$.

If $G_i$ contains an edge $f$ different from $p_{2i-2}p_1$, $p_{2i-2}v$, and $p_1v$, then by the minimality of
$G$ there exists an $L$-coloring $\vf_{3-i}$ of $G-f\supseteq G_{3-i}+x_1x_2$.  If additionally $|L(x_i)|\in \{1,5\}$, then
define $L_i$ to be the list assignment for $G_i$ such that $L_i(v)=\{\vf_{3-i}(v)\}$, $L_i(x_i)=L(x_i)\setminus \{\vf_{3-i}(x_{3-i})\}$,
and $L_i(z)=L(z)$ for any other vertex $z$.  Let $B_i=(P_i, (N\cap V(G_i))\cup \{x_i\}, M\cap E(G_i), L_i)$, and note that
$B_i$ satisfies all the conditions of validity except possibly for (P), (T), or (O).
Observe that $G_i$ is not $L_i$-colorable, and by the minimality of $B$, we conclude that $B_i$ violates (P), (T) or (O).
Since $\vf_{3-i}$ is a coloring of $G-f$, (P) is satisfied by $B_i$.
Since $B$ satisfies (D) and all chords of $F$ are crossed by Lemma~\ref{lemma-cl-nochord-nocr}, it follows that $B_i$ satisfies (T).  Thus, $B_i$ violates (O).
The corresponding obstruction is \ob{N1} since all others either have a special subgraph that would violate the distance condition in $G$,
or have a non-crossed chord incident with $p_1$ which contradicts Lemma~\ref{lemma-cl-nochord-nocr}.

Hence, for $i\in\{1,2\}$, either $E(G_i)\subseteq \{p_{2i-2}p_1,p_{2i-2}v,p_1v\}$, or $|L(x_i)|\in \{3,4\}$, or $B_i$ as defined in the previous paragraph contains \ob{N1}.
Together with Lemmas~\ref{lemma-cl-noseptr} and \ref{lemma-cl-nochord-nocr}, we conclude that one of the following holds:
\begin{itemize}
\item $x_i\in V(P_i)$ and either $G_i=P_i$ or $G_i$ is the triangle on $V(P_i)$, or
\item $|L(x_i)|\in \{3,4\}$, or
\item $G_i$ is equal to \ob{N1} and $x_i$ is its vertex with list of size four.
\end{itemize}
By Lemma~\ref{lemma-cl-nochord-p}, at most one of $x_1$ and $x_2$ has a list of size three or four.
By symmetry, we can assume that $|L(x_1)|\in \{1,5\}$.

If $|L(x_2)|\in \{1,5\}$, then observe that all possible combinations of graphs $G_1$ and $G_2$ (each of which is a path, a triangle, or \ob{N1})
are either $L$-colorable or equal to \ob{C1}. Therefore, $|L(x_2)|\in \{3,4\}$, and thus $x_2$ is incident with $F$.
By Lemmas~\ref{lemma-cl-nochord-nocr} and \ref{lemma-cl-nochord-p}, we have $x_1\not\in V(F)$, and thus $G_1$ is \ob{N1}.

Let $w$ be the vertex of $G_1$ with list of size three, let $G'=G-\{w,p_0\}-p_1v$ and let $L'$ be the list assignment such that
$L'(x_1)=\{\vf_1(x_1)\}$, $L'(v)=\{\vf_1(v)\}$ and $L'(z)=L(z)$ otherwise.  Let $B'=(G',p_2p_1x_1v, N\cap V(G'), M\cap E(G'),L')$
and note that $G'$ is not $L'$-colorable and that the target $B'$ satisfies all the conditions of validity except possibly for (T) and (O).

If $v$ has degree at least $5$ in $G$, then it has degree at least three in $G'$.  Together with Lemmas~\ref{lemma-cl-noseptr} and \ref{lemma-cl-nochord-nocr}, this implies
that $x_2\in V(F)$ is not adjacent to $v$, hence $B'$ satisfies (T).  If $v$ has degree at most four, then $|L(v)|\le 4$ by Lemma~\ref{lemma-cl-basic}(a), and
by (C), $|L(x_2)|=4$, and again $B'$ satisfies (T).  By the minimality of $B$, we conclude that $B'$ violates (O).  Since $x_1$ has degree three in $G'$ and it is adjacent to
a vertex $x_2$ with list of size three or four, $G'$ contains (and by Lemmas~\ref{lemma-cl-noseptr} and \ref{lemma-cl-nochord-nocr}, is equal to) \ob{P1} or \ob{P2}.  However,
then $p_1x_2$ is a non-crossed chord of $F$, which contradicts Lemma~\ref{lemma-cl-nochord-nocr}.
\end{proof}

Let us show an easy corollary.

\begin{lemma}\label{lemma-cl-nopcr}
If $B=(G,P,N,M,L)$ is a minimal counterexample, then no vertex of $P$ is incident with a crossed edge.
\end{lemma}
\begin{proof}
Suppose for a contradiction that $vp$ is a crossed edge with $p\in V(P)$.  By Lemma~\ref{lemma-cl-nochord}, we have $v\not\in V(F)$.  Furthermore, since $P$
is incident with a crossing, (D) implies that $\ell(P)\le 2$.  Let $L'$ be the list
assignment such that $L'(v)=L(v)\setminus L(p)$ and $L'$ matches $L$ on the rest of the vertices of $G$.  Note that $G-vp$ is not $L'$-colorable,
and by the minimality of $B$, we conclude that $B'=(G-vp,P,N\cup\{v\},M,L')$ contains \ob{N1}, \ob{N2} or \ob{N3}, whose internal vertex with list of size 4 is $v$.
Note that $B'$ cannot contain \ob{N1}, since $v$ is not adjacent to all vertices of $P$ in $G-vp$.
Similarly, $B'$ cannot contain \ob{N3}, since the edge $vp$ would be crossed twice.  Finally, $B'$ does not contain
\ob{N2}, since $G$ does not contain \ob{C1} by Lemma~\ref{lemma-cl-noobstr}.
This is a contradiction.
\end{proof}

Next, we prove the following claim that is useful when considering the
condition (T) for targets derived from minimal counterexamples.

\begin{lemma}\label{lemma-cl-no3inp}
If $B=(G,P,N,M,L)$ is a minimal counterexample, then every vertex of $G$ has at most two neighbors in $P$.
\end{lemma}
\begin{proof}
Let $k=\ell(P)$ and $P=p_0p_1\ldots p_k$. Suppose for a contradiction that a vertex $v\in V(G)$
has at least three neighbors $p_a,p_b,p_c\in V(P)$, where $0\le a<b<c\le k$.  By Lemma~\ref{lemma-cl-nochord}, $v$ is not incident with the outer face $F$ of $G$.
Let $K$ be the cycle $p_ap_{a+1}\ldots p_cv$, and note that $K$
has a chord $vp_b$.  By Lemma~\ref{lemma-cl-nopcr}, none of the edges $vp_a$, $vp_b$ and $vp_c$ is crossed.  By Lemma~\ref{lemma-cl-basic}(d) applied to the two
cycles in $K+vp_b$ containing the edge $vp_b$, the cycle $K$ is not separating (Lemma~\ref{lemma-cl-basic}(d) allows a vertex of $N$ or a $K_4$ with a crossed edge
in the interior of $K$; however, this would only be possible if $\ell(P)=3$, yielding two special subgraphs at distance 1).

Suppose first that $c-a=\ell(P)$, and let $G_2$ be the $p_avp_c$-component of $G$ that does not contain $P$.
Since $v\notin V(F)$, and $v\notin N$ if $\ell(P)=3$, there exists a color $g\in L(v)$ that does not appear in the lists of vertices in $P$.
Let $L_2$ be the list obtained from $L$ by setting $L_2(v)=\{g\}$.
Observe that $G_2$ is not $L_2$-colorable, and thus $B_2=(G_2,p_avp_c,N,M,L_2)$ is not a valid target.
This is only possible if $B_2$ violates either (T) or (O).  In the former
case, $G$ is either \ob{N1} or \ob{P6}, which contradicts Lemma~\ref{lemma-cl-noobstr}.  In the latter case, we have $\ell(P)=2$ by the distance condition,
and \refclaim{cl-colprop} together with Lemmas~\ref{lemma-cl-noseptr} and \ref{lemma-cl-nochord-nocr} imply that $G$ is $L$-colorable
unless $G_2$ is either \ob{M1} or \ob{C1}.  If $G_2$ is \ob{M1}, then $G$ is \ob{M2},
and if $G_2$ is \ob{C1}, then $G$ is $L$-colorable; in both cases, we obtain a contradiction.

Hence, $c-a<\ell(P)$, and thus $\ell(P)=3$; by symmetry, we can assume that $v$ is adjacent to say $p_0$, $p_1$ and $p_2$, and $v$ is not adjacent to $p_3$.
If $L(p_0)=L(p_2)$, then $G-vp_2$ gives a counterexample contradicting the minimality of $B$.  Therefore, $L(p_0)\neq L(p_2)$.  Since the edges $vp_0$, $vp_1$, and $vp_2$
are not crossed by Lemma~\ref{lemma-cl-nopcr}, Lemma~\ref{lemma-cl-basic}(d) implies that the degree of $p_1$ is three.  Let $G'=G-p_1+p_0p_2$,
with the list assignment $L'$ such that $L'(v)=L(v)\setminus L(p_1)$ and $L'(z)=L(z)$ for $z\in V(G')\setminus \{v\}$.
Let $B'=(G,p_0p_2p_3, N\cup \{v\}, M,L')$.  Note that $B'$ satisfies (D), since the rank of the special subgraph
$p_1p_2$ in $B$ is greater than the rank of the special subgraph $v$ in $B'$, and any path $Q$ between
two special subgraphs $S_1$ and $S_2$ that uses the new edge $p_0p_2$ gives rise to paths between $S_1$ or $S_2$ and the middle edge $p_1p_2$ of $P$ in $G$, thus implying
$\ell(Q)\ge 14+r(S_1)+r(S_2)+2r(p_1p_2)-1>7+r(S_1)+r(S_2)$.  Since $G'$ is not $L'$-colorable, the minimality of $B$ implies that $B'$ is not valid,
and this is only possible if $B'$ violates (O).  Hence, $B'$ contains \ob{N1}, \ob{N2}, or \ob{N3}, with $v$ as the vertex with list of size $4$ not incident with the outer face.
However, then $B$ contains \ob{P6}, \ob{P4}, or \ob{P5}, respectively, which contradicts Lemma~\ref{lemma-cl-noobstr}.
\end{proof}

Let us now derive a variation on Lemma~\ref{lemma-cl-noseppen}.
\begin{lemma}\label{lemma-cl-nosepnearc}
Let $B=(G,P,N,M,L)$ be a minimal counterexample.  Suppose that $Q=x_1x_2\ldots x_{t-1}x_t$ is a path in $G$, where $t\le 6$ and $x_1x_2$ crosses $x_{t-1}x_t$.  Let $c$ be the closed
curve consisting of the path $x_2\ldots x_{t-1}$ and parts of the edges $x_1x_2$ and $x_{t-1}x_t$.
If $x_1$ is not drawn in the open disk $\Lambda$ bounded by $c$, then no vertex of $G$ is contained in $\Lambda$.
\end{lemma}
\begin{proof}
Observe first that the curve $c$ is not crossed since all its edges are close to a crossing. Let $G'$ be the subgraph of $G$ consisting of the vertices
and edges drawn fully in the closure of $\Lambda$.  Suppose for a contradiction that $x_1\not\in \Lambda$ (and hence $x_t\not\in\Lambda$) and that $\Lambda$ contains a vertex $v$ of $G$.
By the minimality of $B$, there exists an $L$-coloring $\vf$ of $G-v$.  Let $L'$ be the list assignment for $G'$ defined by $L'(x_i)=\{\vf(x_i)\}$ for $i=2,\ldots,t-1$
and by $L'(z)=L(z)$ for $z\in V(G)\cap \Lambda$.  Note that $G'$ is not $L'$-colorable.  However, then $B'=(G', x_2\ldots x_{t-1}, N\cap V(G'),\emptyset, L')$ is
a counterexample contradicting the minimality of $B$ (note that $B'$ satisfies (D) even if $t=6$, since the middle edge of the path $x_2x_3x_4x_5$ has
smaller rank then the crossing, whose distance to $x_3x_4$ in $G$ is one).
\end{proof}

Next, we consider an analogue of a chord formed by parts of two crossing edges.

\begin{lemma}\label{lemma-cl-crossnn}
Let $B=(G,P,N,M,L)$ be a minimal counterexample, and let $F$ denote the outer face of $G$.
Let $uv$ and $xy$ be crossing edges with $u,x\in V(F)$, and let $c$ denote the curve formed by the parts of $uv$ and $xy$ between
the crossing and $u$ and $x$, respectively.  If $c$ is not a part of the boundary of $F$, then the part of $G$ separated from $P$ by $c$ consists only of the edge $ux$.
\end{lemma}
\begin{proof}
By Lemma~\ref{lemma-cl-nopcr}, neither $u$ nor $x$ belongs to $P$, and by Lemma~\ref{lemma-cl-nochord}, we have $v,y\not\in V(F)$.
Suppose that $c$ is not part of the boundary of $F$.  Let $G_2$ be the subgraph of $G$ consisting of the vertices and edges that are fully
drawn inside the closed disc bounded by $c$ and the part of the boundary of $F$ between $u$ and $x$ that does not contain $P$.
Note that there are two possible situations, depending on whether $G_2$ includes the vertices $v$ and $y$ or not. In either case,
we can write $G=G_1\cup G_2$ for a subgraph $G_1$ of $G$ such that $G_1\cap G_2$
consists only of vertices $u$ and $x$. Let $G_2'$ be the graph obtained from $G_2$ by adding a common neighbor $w$
of $u$ and $x$.
By the minimality of $B$, there exists an $L$-coloring $\vf$ of $G_1$. Let $L_2$ be a list assignment such that $L_2(u)=\{\vf(u)\}$, $L_2(x)=\{\vf(x)\}$,
$L_2(w)=\{a\}$ for some color $a$ distinct from $\vf(u)$ and $\vf(v)$, and $L_2(z)=L(z)$ for $z\in V(G'_2)\setminus \{u,x,w\}$.
Let $B_2=(G'_2,uwx,N\cap V(G'_2), M\cap E(G'_2), L_2)$.  Note that $G'_2$ is not $L_2$-colorable, and by the minimality of $B$, it follows that $B_2$ is not
a valid target.  This is only possible if $B_2$ violates (P), and thus $ux\in E(G)$.

If $v,y\not\in V(G_2)$, then by Lemmas~\ref{lemma-cl-nochord} and \ref{lemma-cl-nosepnearc}, it follows that $G_2$ consists only of the edge $ux$.
If $v,y\in V(G_2)$, then we can redraw the edge $ux$ along $c$ so that $ux$ becomes a chord of $F$, which contradicts Lemma~\ref{lemma-cl-nochord}.
\end{proof}

\begin{figure}
\begin{center}
\includegraphics[width=100mm]{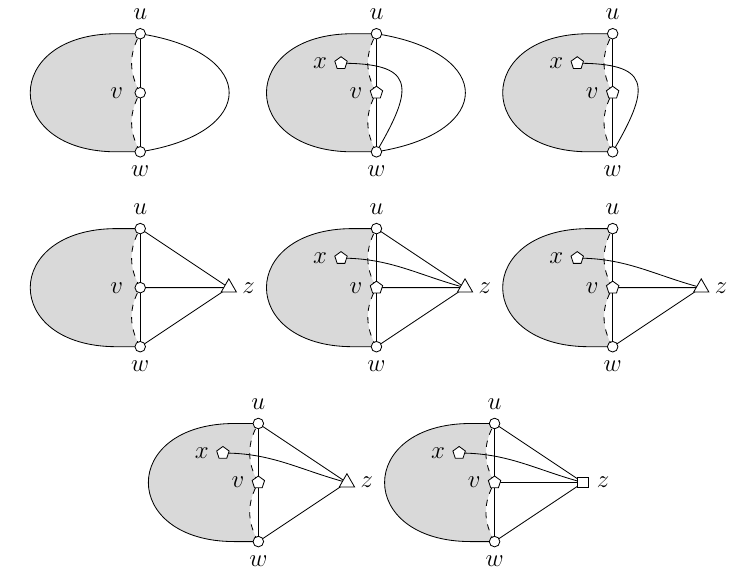}
\end{center}
\caption{Possible cases for $G_2$ for a $2$-chord $uvw$.}
\label{fig-2chord}
\end{figure}

Now, we shall consider the 2-chords of $F$.

\begin{lemma}\label{lemma-cl-2chord}
Let $B=(G,P,N,M,L)$ be a minimal counterexample, and let $F$ denote the outer face of $G$.
Let $uvw$ be a $2$-chord of $F$ such that $vw$ is not crossed and neither $u$ nor $w$ is an internal vertex of $P$.
If $G_1$ and $G_2$ are the $uvw$-components of $G$ with $P\subset G_1$, then one of the following holds (cf.\ Figure~\ref{fig-2chord}):
\begin{itemize}
\setlength{\itemsep}{0pt}
\item $V(G_2)=\{u,v,w\}$, and either $uv$ is not crossed and $uw\in E(G)$, or
$uv$ is crossed by an edge incident with $w$; in the latter case, $uw$ may or may not be an edge.
\item $V(G_2)=\{u,v,w,z\}$ for a vertex $z$ with list of size three, and either $uv$ is not crossed and
$uz,vz,wz\in E(G)$, or $uv$ is crossed by an edge incident with $z$, $zw\in E(G)$ and at least one of
$uz$ and $vz$ is an edge.
\item $V(G_2)=\{u,v,w,z\}$ for a vertex $z$ with list of size four adjacent to $u$, $v$, $w$ and incident with
an edge crossing $uv$.
\end{itemize}
\end{lemma}
\begin{proof}
For a contradiction, let us consider a $2$-chord $uvw$ that does not satisfy the conclusion of the lemma such that ($\star$) $uv$ is non-crossed
if possible, and subject to that, $G_2$ is maximal.
Let us distinguish the cases depending on whether $uv$ is crossed or not.

\textbf{First, suppose that $uv$ is not crossed.}  By the minimality of $B$, there exists an $L$-coloring $\vf$ of $G_1$.  Let $L_2(z)=\{\vf(z)\}$ for $z\in \{u,v,w\}$ and $L_2(z)=L(z)$
for $z\in V(G_2)\setminus \{u,v,w\}$, and let $B_2=(G_2,uvw,N\cap V(G_2), M\cap E(G_2), L_2)$.  Note that $G_2$ is not $L_2$-colorable, and thus the target $B_2$
is not valid.  This is only possible if it violates (P), (T), or (O).
If $B_2$ violates (P) or (T), then by Lemmas~\ref{lemma-cl-noseptr} and \ref{lemma-cl-nochord} the conclusion of Lemma~\ref{lemma-cl-2chord} holds for $uvw$.  Therefore, we conclude that
$B_2$ violates (O).

Since the obstruction in $B_2$ violating (O) contains a special subgraph with a vertex distinct from $v$ and $v\notin V(F)$, the condition (D) for $B$ implies
that $v\notin N$, and hence $|L(v)|=5$. By Lemmas~\ref{lemma-cl-noseptr} and \ref{lemma-cl-nochord-nocr} we also conclude that $G_2$ is equal to the obstruction.
Let $S$ be the set of $L$-colorings of $uvw$ that do not extend to an $L$-coloring of $G_2$. The inspection of the obstructions with $\ell(P)=2$ in Figure \ref{fig-badlists} shows that one of the following holds:
\begin{itemize}
\item[(R1)] there exists a set $A$ of at most two colors and $S$ contains only colorings $\psi$ such that $\psi(v)\in A$,
and furthermore, if $|A|=2$ then $|L(u)|\neq 3$ and $|L(w)|\neq 3$; or,
\item[(R2)] $S$ contains only colorings $\psi$ such that $\psi(u)=\psi(w)$, and neither $u$ nor $w$ has list of size three.
\end{itemize}
Indeed, by \refclaim{cl-colprop}, all obstructions except for \ob{M1} and \ob{C1} satisfy (R1) with $|A|=1$.
If $G_2$ is \ob{M1} or \ob{C1}, then neither $u$ nor $w$ has list of size three, by (M) together with the distance condition
and by (C).  The inspection of the colorings shows that if $G_2$ is \ob{C1}, then (R1) holds with $|A|=2$,
and if $G_2$ is \ob{M1}, then either (R1) holds with $|A|=2$, or (R2) holds (the latter is the case
when the two lists of size 3 are equal, i.e., $a=c$ in Figure~\ref{fig-badlists}).

If (R1) holds, then let $G'=G_1$, with the list assignment $L'$ such that $L'(v)=L(v)\setminus A$ and $L'(z)=L(z)$ for $z\ne v$.  Note that if
$|A|=2$, then $v$ has no neighbor in $G_1$ with list of size three by (R1) and by ($\star$)---the edge from $v$ to such a neighbor cannot
be crossed by the condition (D) for $B$ and the existence of the obstruction in $B_2$.  If (R2) holds, then let $G'=G_1+uw$ with the list
assignment $L'=L$.  Let $B'=(G',P,N\cap V(G'), M\cap E(G)', L')$.
In either case, since $G$ is not $L$-colorable, we conclude that $G'$ is not $L'$-colorable. The target $B'$ satisfies (D)---in the latter case, any path $Q$
between special subgraphs $H_1$ and $H_2$ using the added edge $uw$ gives rise to paths from $H_1$ and $H_2$ to the special subgraph $H$ of $G_2$,
and since $d(u,H)\le 2$ and $d(w,H)\le 2$, we have $\ell(Q)\ge 14+r(H_1)+r(H_2)+2r(H)-3>r(H_1)+r(H_2)+7$.  Furthermore, $B'$ satisfies (T) by Lemma~\ref{lemma-cl-no3inp},
and if $B'$ violated (C) or (O), then $v$ or
$uw$ would have to belong to a crossing or to an obstruction in $B'$, and the distance between its special subgraph and the special subgraph of $B_2$ would
be at most $4$, which contradicts the condition (D) for $B$.  Note that $B'$ cannot violate (P), as otherwise $u,w\in V(P)$ and $G_2$ is \ob{M1}, and
by Lemma~\ref{lemma-cl-basic} and \ref{lemma-cl-no3inp}, $v$ would have degree four and list of size five.
Therefore, $B'$ is a counterexample contradicting the minimality of $B$.

Hence, we conclude that Lemma~\ref{lemma-cl-2chord} holds for all $2$-chords without crossed edges.

\textbf{Suppose now that $uv$ is crossed} by an edge $xy$, where $x\in V(G_1)$ and $y\in V(G_2)$.  If $y=w$, then the conclusion of Lemma~\ref{lemma-cl-2chord} holds for $uvw$
by Lemmas~\ref{lemma-cl-crossnn} and \ref{lemma-cl-nosepnearc}, hence assume that $y\neq w$.  Furthermore, $x\neq w$ by Lemma~\ref{lemma-cl-crossnn}, and $uw\not\in E(G)$ by Lemma~\ref{lemma-cl-noseptr}.
Let $G_1'$ be the graph obtained from $G_1$ by adding the edges $ux$ and $vx$ (if they are not present already).
Note that this can be done without introducing any new crossings. Let $B'_1=(G'_1,P,N\cap V(G_1), M\cap E(G_1),L)$.
Since $u$, $v$ and $x$ are incident with a crossing in $G$, the target $B_1'$ satisfies (D) and (M). Furthermore, $B'_1$ does not contain any obstruction,
as its special subgraph would be at distance at most $2$ from the crossing. 
By Lemma~\ref{lemma-cl-nopcr}, $u$, $v$, and $x$ cannot belong to $P$, hence $B'_1$ satisfies (P) and (T).  We conclude that $B'_1$ is a valid target, and by the minimality of $B$,
there exists an $L$-coloring $\vf$ of $G'_1$.

Let $G'_2$ be the graph obtained from $G_2-uv$ by adding the vertex $x$ and edges $ux,vx,yx$. Consider the list assignment $L'_2$ for $G_2'$ such that
$L_2'(z)=\{\vf(z)\}$ for $z\in\{u,v,w,x\}$ and $L_2'(z)=L(z)$ otherwise.  Let $B'_2=(G'_2,uxvw,N\cap V(G_2'), M\cap E(G_2'),L'_2)$.  Note that $G'_2$ is not $L'_2$-colorable and that 
$B'_2$ satisfies (D).

Since $y\neq w$ and since $uw\notin E(G_2')$, the target $B'_2$ satisfies (P).  If $B'_2$ violates (T), then by Lemma~\ref{lemma-cl-noseptr} we have that $|L(y)|=3$ and $y$ is adjacent
to at least two of $u$, $v$ and $w$. In particular, $y\in V(F)$. Observe that if $vy\in E(G)$, then $wy\in E(G)$, since Lemma~\ref{lemma-cl-2chord} holds for the non-crossed $2$-chord
$yvw$.  Thus $wy\in E(G)$ in any case.  By Lemma~\ref{lemma-cl-nosepnearc} applied to the path $xywvu$ in $G$, and by Lemma~\ref{lemma-cl-crossnn}, we have $V(G_2)=\{u,v,w,y\}$ and the conclusion of Lemma~\ref{lemma-cl-2chord} holds for $uvw$.
Hence, we can assume that $B'_2$ satisfies (T).  By the minimality of $B$, the target $B'_2$ is not valid, and we conclude that it violates (O).
By Lemmas~\ref{lemma-cl-basic} and \ref{lemma-cl-noseptr}, $G'_2$ is equal to one of \ob{P1}, \ldots, \ob{P6}, but not to \ob{P3} since $x$ has degree $3$ in $G_2'$.

If $G'_2$ is \ob{P1}, then the conclusion of Lemma~\ref{lemma-cl-2chord} holds for $uvw$.  Otherwise, let us define
$S$ as the set of colorings $\psi$ of the path $uxvw$ that do not extend to an $L$-coloring of $G_2'$ and satisfy $\psi(u)\ne \psi(v)$.
The inspection of the obstructions and their problematic list assignments displayed in Figure \ref{fig-badlists} shows that either (R1)
or one of the following holds:
\begin{itemize}
\item[(R3)] $G'_2$ is \ob{P2} and there exists a color $c$ such that $S$ contains only colorings $\psi$ such that
either $\psi(u)=c$ and $\psi(x)=\psi(w)$, or $\psi(x)=c$ and $\psi(u)=\psi(w)$. Moreover, $|L(u)|\neq 3$ and $|L(w)|\neq 3$.
\item[(R4)] $G'_2$ is \ob{P4} and there exists a color $c$ such that $S$ contains only colorings $\psi$ satisfying either $\psi(v)=c$ or $\psi(x)=c$.  Moreover, $|L(u)|\ne3$.
\end{itemize}
Let us remark that for \ob{P2} we have (R1) if the colors $a,b,c,d$ in Figure \ref{fig-badlists} are different; we have (R3) if $b=d$ or $a=d$.
To argue for \ob{P4}, \ob{P5}, \ob{P6} we observe that $\psi(x)$ and $\psi(v)$ should be taken from the difference of the lists of the two neighbors of $u$
(so these are colors $b,c$ in Figure~\ref{fig-badlists}). This yields (R1) with the only exception in the case of \ob{P4}, where we cannot argue about $|L(w)|\ne3$,
so we need (R4) in this (and only this) case.

The conclusions that the specified vertices do not have lists of size three follow in all applicable cases by noting that otherwise either (C) or the distance
condition would be violated.  For example, for (R3) we argue as follows. Since $x$ has degree $3$ in $G_2'$,
the vertex $z$ of \ob{P2} with list of size 3 is not the vertex $y$, and $v,w$ are both adjacent to $z$. Since $|L(z)|=3$ and the edge $wz$ is
close to a crossing in $G$, we conclude that $wz\notin M$ and hence $|L(w)|\ne 3$. Since $|L(y)|=4$, (C) implies that $|L(u)|\ne 3$.

If (R1) holds, then let $G'_1=G_1$, let $M_1=M\cap E(G_1)$, let $N_1=N\cap V(G_1)$, and let $L_1$ be the list assignment obtained from $L$ by removing $A$ from the list of $v$.
If (R3) holds, then we let $G'_1=G_1+uw$ with the list assignment $L_1$ obtained from $L$ by removing $c$ from the list of $u$ (note that $|L(u)|\neq 1$
by Lemma~\ref{lemma-cl-nopcr}), let $N_1=N\cap V(G_1)$, and let $M_1=(M\cap E(G_1))\cup \{uz\}$ if $u$ has a neighbor $z$ with list of size three
and $M_1=M\cap E(G_1)$ otherwise.

If (R4) holds and $|L(x)|=5$, then let $G'_1=G_1$ with
the list assignment $L_1$ obtained by removing $c$ from the lists of $x$ and $v$, let $N_1=(N\cap V(G_1))\cup\{x\}$ and let $M_1=M\cap E(G_1)$. In all the cases, $B'_1=(G'_1,P,N_1,M_1,L_1)$
is a valid target.  Indeed, all the conditions except for (P), (T), and (O) are easy to argue.
(P) holds since $u\not\in V(P)$ by Lemma~\ref{lemma-cl-nopcr}.
Similarly, (T) follows by Lemmas~\ref{lemma-cl-nochord} and \ref{lemma-cl-no3inp}.  Finally, (O) holds since by the distance condition for $B$,
we could only create \ob{M1} or \ob{M2} if (R3) holds, and
\ob{N1}, \ob{N2}, or \ob{N3} if (R4) holds, and each of them is excluded by Lemma~\ref{lemma-cl-nochord} or \ref{lemma-cl-no3inp}, or by the condition $|L(u)| \neq 3$.
However, any $L_1$-coloring of $G'_1$ would extend to an $L$-coloring of $G$, and thus $B'_1$ is a counterexample contradicting the minimality of $B$.

Finally, consider the case that (R4) holds and $|L(x)|\in \{3,4\}$.  By Lemma~\ref{lemma-cl-crossnn}, all neighbors of $u$ distinct from $x$ belong to $G_2$.
By Lemma~\ref{lemma-cl-nopcr}, we have $u\not\in V(P)$, and since $\deg(u)\ge |L(u)|\ge 4$, we conclude that $u$ is adjacent to $x$ and $|L(u)|=4$.  Since $G'_2$ is \ob{P4},
every $L$-coloring of $x$, $v$ and $w$ extends to an $L$-coloring of the graph consisting of $G_2$, $x$, and all the edges between $x$ and $V(G_2)$.  We conclude that $G_1$ is not $L$-colorable, and thus $(G_1,P,N\cap V(G_1),M\cap E(G_1),L)$ contradicts the minimality of $B$.
\end{proof}

Similarly, one can prove the following.

\begin{lemma}\label{lemma-cl-c2chord}
Let $B=(G,P,N,M,L)$ be a minimal counterexample, and let $F$ denote the outer face of $G$.
Let $u,w\in V(F)$ be distinct vertices which are not crossing-adjacent and neither of which is an internal vertex of $P$. Suppose that $v\not\in V(F)$ is a vertex adjacent to $w$
and crossing-adjacent to $u$. Let $c$ be the
closed curve consisting of $vw$, parts of the crossed edges incident with $u$ and $v$,
and a part of the boundary of $F$ between $u$ and $w$ that does not contain $P$, and let $G_2$ be the subgraph of $G$ consisting of vertices and edges fully drawn in the
closed disc bounded by $c$.  Then $G_2$ does not contain the crossing and satisfies one of the following:
\begin{itemize}
\item[\rm (a)] $V(G_2)=\{u,v,w\}$ and $uw\in E(G)$, or
\item[\rm (b)] $V(G_2)=\{u,v,w,z\}$, $|L(z)|=3$ and $z$ is adjacent to $u$, $v$ and $w$.
\end{itemize}
\end{lemma}
\begin{proof}
By Lemma~\ref{lemma-cl-2chord}, it suffices to consider the case that $uv\not\in E(G)$.
Let $c_1$ be the closed curve consisting of $vw$, parts of the crossed edges incident with $u$ and $v$,
and a part of the boundary of $F$ between $u$ and $w$ that contains $P$.
Let $G_1$ be the subgraph of $G$ drawn in the closed disk bounded by $c_1$.
Let $G'_1$ be the graph obtained from $G_1$ as follows:  If $uw\in E(G)$, then we add the edge $uw$.
If $u$, $v$ and $w$ have a common neighbor $z$ with list of size three, then we add $z$ and incident edges.
If $V(G'_1)=V(G)$, then (a) or (b) holds.

Otherwise, there exists an $L$-coloring $\vf$ of $G'_1$ by the minimality of $B$.  Let $G'_2=G_2+uv$ with the list assignment $L'$ such that $L'(v)=\{\vf(v)\}$,
$L'(w)=\{\vf(w)\}$, $L'(u)=\{a\}$ for a new color $a$, $L'(x)=(L(x)\setminus \{\vf(u)\})\cup\{a\}$ for
each neighbor $x$ of $u$ distinct from $v$ and $w$ and $L'(x)=L(x)$ for all other vertices $x$ of $G'_2$.
Note that $G'_2$ is not $L'$-colorable, and by the minimality of $G$, it follows that $B'=(G'_2,uvw,N\cap V(G'_2), M\cap E(G'_2), L')$
is not a valid target.  Note that $B'$ satisfies (P) and (T) by the construction of $G_1'$ and the choice of $\vf$.
The only other condition that can be violated by $B'$ is (O).
By the distance condition, the only obstruction that can appear in $B'$ is \ob{C1}.  However, letting $t$ be the neighbor of $u$ in $G_2$ with list of size four,
the $2$-chord $wvt$ contradicts Lemma~\ref{lemma-cl-2chord}.
\end{proof}

Let us now introduce a way of defining list assignments that will be used throughout the rest of the paper.
Let $(G,P,N,M,L)$ be a valid target.
Let $\vf$ be any proper partial $L$-coloring of $G$ such that $\vf(v)\not\in L(p)$ for every pair of adjacent vertices
$v\in\dom(\vf)$ and $p\in V(P)$. 
For each vertex $z\in V(G)\setminus V(P)$, let $$R_z=\bigcup_{p\in V(P)\setminus \dom(\vf), zp\in E(G)} L(p).$$
For $z\in V(P)$, let $R_z=\emptyset$.
We define $\lf$ to be the list assignment such that
$$\lf(z)=\Bigl(L(z)\setminus \{\vf(x):x\in\dom(\vf),xz\in E(G)\}\Bigr)\cup R_z$$
for each $z\in V(G)$.  Let us also define $\gf=G-\dom(\vf)$.
That is, from the list of each vertex $z$ we remove the colors of its neighbors according to $\vf$, except for those
colors that are also forbidden by the neighbors of $z$ in $P\cap \gf$.

Consider any $\lf$-coloring $\psi$ of $\gf$.  We claim that the combination of $\vf$ with $\psi$ is a proper
$L$-coloring of $G$.  Indeed, for any $z\in V(\gf)$, we clearly have $\psi(z)\not\in R_z$, and thus $\psi(z)\in \lf(z)$
is different from the colors of the neighbors of $z$ in $\dom(\vf)$.  Since $G$ is not $L$-colorable,
we conclude that $\gf$ is not $\lf$-colorable.

Suppose now that $(G,P,N,M,L)$ is a valid target and that $G$ contains a subgraph $H$ isomorphic to one of the graphs drawn in Figure~\ref{fig-obst} such that the subgraph of $H$
corresponding to full-circle vertices is equal to $P$, triangle vertices have lists of size {\em at least} three, square vertices have lists
of size {\em at least} four and pentagonal vertices have lists of size five.  Then we say that $H$ is a {\em near-obstruction}.

\begin{lemma}\label{lemma-cl-nearobst}
Let $B=(G,P,N,M,L)$ be a minimal counterexample, and let $F$ denote the outer face of $G$.  If $H$ is a near-obstruction in $B$,
then $H$ is isomorphic to one of\/ \ob{M1}, \ob{N2}, \ob{N3} or \ob{P3}.  Furthermore, $|(V(H)\cap V(F))\setminus V(P)|\le 1$, and if
$(V(H)\cap V(F))\setminus V(P)\ne \emptyset$, then $H$ is \ob{N2} or \ob{N3}.
\end{lemma}
\begin{proof}
By Lemma~\ref{lemma-cl-no3inp}, $H$ is isomorphic to one of \ob{M1}, \ob{N2}, \ob{N3}, \ob{C2}, \ob{C3}, \ob{C4}, \ob{C5} or \ob{P3}.
Let $k=\ell(P)$ and $P=p_0p_1\ldots p_k$.  Note that $G\neq H$, since otherwise $G$ is $L$-colorable by (O).

Let us exclude several of the possible near-obstructions.
\begin{itemize}
\item If $H$ is \ob{C5}, then let $w$ be the vertex of the outer face of $H$ not belonging to $P$.  By Lemma~\ref{lemma-cl-2chord},
$G$ is obtained from $H$ either by adding the edge $p_0p_2$, or a vertex $z$ of degree three adjacent to $p_0$, $w$, and $p_2$.
However, the distance condition implies that $w\notin N$, so that $|L(w)|=5$. In both cases, this implies that $G$ is
$L$-colorable, which is a contradiction.

\item If $H$ is \ob{C2}, then let $p_0w_1w_2p_2$ be the path in the outer face of $H$.  If $w_1,w_2\in V(F)$, then $V(G)=V(H)$ and $G$ is $L$-colorable by
(O). Hence, by symmetry we can assume that $w_2\not\in V(F)$, and thus $|L(w_2)|=5$.  If $w_1\in V(F)$, then since $w_2$ has degree at least $5$, by Lemma~\ref{lemma-cl-2chord} we
have that $G$ is obtained from $H$ by adding a vertex $z$ is adjacent to $w_1$, $w_2$ and $p_2$. However, then
$G$ is $L$-colorable.  Therefore, $w_1\not\in V(F)$.

Let $\vf$ be an $L$-coloring of $H$, let $G_2$ be the $p_0w_1w_2p_2$-component of $G$ that does not contain $P$, with the list assignment $L_2$
obtained from $L$ by setting $L_2(x)=\{\vf(x)\}$ for $x\in\{p_0,w_1,w_2,p_2\}$, and let $B_2=(G_2,p_0w_1w_2p_2,N\cap V(G_2),M\cap E(G_2),L)$.
Since $\vf$ does not extend to an $L$-coloring of $G_2$, it follows that $B_2$ is not a valid target, which is only possible
if it violates (P), (T) or (O).  Since both $w_1$ and $w_2$ have degree at least $5$ in $G$, Lemma~\ref{lemma-cl-noseptr} implies that $p_0w_2\notin E(G)$ and
$w_1p_2\notin E(G)$, and thus $B_2$ satisfies (P).

Suppose that $B_2$ violates (T). Then a vertex $z$ with $|L(z)|=3$ is adjacent to three vertices among
$p_0$, $w_1$, $w_2$ and $p_2$.
If it is adjacent to all four of them, then $B$ contains \ob{C5} as a near-obstruction, which has already been excluded. Otherwise, since $w_1$ and $w_2$ have degree at
least 5 in $G$, Lemma~\ref{lemma-cl-nosepq} implies that $z$ is not adjacent to $p_0, w_1, p_2$. By symmetry, we may assume that $z$ is adjacent to $p_0,w_1,w_2$.
Then Lemma~\ref{lemma-cl-2chord} applied to the 2-chord $zw_2p_2$ shows that there is a vertex $z'$ adjacent to $z$ with $|L(z')|=3$, and
thus the edge $zz'$ contradicts either (M) or (D) in $B$.  Hence, $B_2$ satisfies (T).

Finally, if $B_2$ violates (O), then the obstruction is equal to one of \ob{P1}, \ob{P2}, \ob{P3},
\ob{P4}, \ob{P5} or \ob{P6}.  However, then it is easy to see (by comparing bad lists for the obstructions) that $G$ is $L$-colorable,
which is a contradiction.

\item If $H$ is \ob{C3}, then let $w_1$ be the vertex of $H$ drawn by the triangle and $w_2$ the vertex of $P$ that is not adjacent to it in $G$.  If $H$ is
\ob{C4}, then let $w_1$ and $w_2$ be the vertices of $H$ drawn by triangles.  By symmetry, we can assume that $w_1$ is the neighbor of $p_2$.  Let
$w_1x_1x_2w_2$ be the path in $H$ formed by neighbors of $p_1$.  Note that $|L(w_i)|\in\{1,5\}$ by Lemma~\ref{lemma-cl-nochord}.

Choose an $L$-coloring
$\vf$ of the subgraph of $G$ induced by $V(P)\cup \{w_1,w_2\}$ such that $\vf(w_1)\neq \vf(w_2)$ and either $|\lf(x_1)|\ge 4$ or
$\lf(x_1)\neq\lf(x_2)$. Note that this is possible since $|L(w_1)|=5$.  Let $G'=G-\{p_1,x_1,x_2\}+w_1w_2$ with the list assignment $L'$ such that
$L'(z)=\{\vf(z)\}$ for $z\in \{w_1,w_2\}$ and $L'(z)=L(z)$ otherwise.  Observe that every $L'$-coloring of $G'$ extends to an $L$-coloring of $G$, and thus
$G'$ is not $L'$-colorable.  Let $P'=w_2w_1p_2$ if $H$ is \ob{C3} and $P'=p_0w_2w_1p_2$ if $H$ is \ob{C4}, and let $B'=(G',P',N\cap V(G'), M\cap E(G'), L')$.
By the minimality of $B$, we conclude that $B'$ is not a valid target.

Note that the choice of $\vf$ ensures that $B'$ satisfies (P).  Observe that $B'$ may only violate (T) or (O).
In the former case, by symmetry we can assume that there exists a vertex $z\in V(G)$ such that $|L(z)|=3$ and $z$ is adjacent to
$p_2$, $w_1$ and either $w_2$ or $p_0$.  It follows that $G$ contains a separating $4$-cycle formed by non-crossed edges, and by Lemma~\ref{lemma-cl-basic}(d)
the interior of this $4$-cycle contains $K_4$.  By Lemmas~\ref{lemma-cl-nochord} and \ref{lemma-cl-2chord}, there are no other vertices in $G$. Now, it is easy
to see that the resulting graph $G$ is $L$-colorable.  This is a contradiction.

Therefore, $B'$ violates (O).  By the condition (D) for $G$, we conclude that $\ell(P')=3$ (and thus $H$ is \ob{C4}) and $B'$ contains one of \ob{P1}--\ob{P6}.
Note that the edge $w_1w_2$ is contained in a triangle in $G'$; let $z$ be the
common neighbor of $w_1$ and $w_2$.  By Lemma~\ref{lemma-cl-basic}, the $4$-cycle $w_1zw_2p_1$ surrounds $K_4$ in $G$.  However, then $G$ is obtained
from one of the obstructions \ob{P1}--\ob{P6} with the precolored path $p_0w_2w_1p_2$ by
adding the vertex $p_1$ joined to the vertices $p_0,w_1,w_2,p_2$, deleting the edge $w_1w_2$, and adding $K_4$ inside the 4-cycle $w_1zw_2p_1$, and all such graphs
are easily seen to be $L$-colorable.  This is a contradiction.
\end{itemize}

We conclude that $H$ is one of \ob{M1}, \ob{N2}, \ob{N3}, and \ob{P3}. If $H$ is \ob{M1} or \ob{P3}, then none of the vertices in $V(H)\setminus V(P)$
belongs to $F$ since this would contradict Lemma~\ref{lemma-cl-nochord}. In the other cases, at most one of the vertices of $V(H)\setminus V(P)$ can belong to $F$ by the same
reason.
\end{proof}

Next, we analyze the part of the boundary of the outer face of a minimal counterexample next to the precolored path.

\begin{lemma}\label{lemma-nearby}
Let $B=(G,P,N,M,L)$ be a minimal counterexample, and let $F$ denote the outer face of $G$.
Let $k=\ell(P)$ and let $p_k\ldots p_1p_0v_1v_2\ldots v_s$ be the vertices contained in the boundary of $F$ in the cyclic order
around it.  Then $k\ge 2$, $s\ge 3$ and $v_1v_2,v_{s-1}v_s\in E(G)$.  Furthermore, if $|L(v_1)|>3$ and $|L(v_2)|>3$, then $v_1$ and $v_2$
are crossing-adjacent and $|L(v_1)|=|L(v_2)|=4$.
\end{lemma}
\begin{proof}
Let us define $v_{s+1}=p_k$, $v_{s+2}=p_{k-1}$, \ldots.
Note that for $i=1,\ldots, s$, we either have $v_iv_{i+1}\in E(G)$, or $v_i$ and $v_{i+1}$ are crossing-adjacent.
Lemma~\ref{lemma-cl-nopcr} implies that $p_0v_1,p_kv_s\in E(G)$.

If $k<2$, then let $P'=v_1p_0\ldots p_k$ and let $L'$ be the list
assignment obtained from $L$ by setting $L'(v_1)=\{a\}$ for a color $a\in L(v_1)$ chosen so that $B'=(G,P',N,M,L')$ satisfies
(P) and (O).  Then it is easy to see that $B'$ is a valid target, and by the minimality of $B$, we conclude that $G$ is $L'$-colorable.
This is a contradiction, since this also gives an $L$-coloring of $G$.  Therefore, we have $k\ge 2$.

If $s=0$, then let $\vf$ be the $L$-coloring of $p_0$; observe that $(\gf,p_1\ldots p_k,N\cap V(\gf),\emptyset,\lf)$ is
a counterexample contradicting the minimality of $B$.  Hence, $s\ge 1$.

Suppose for a contradiction that $s=1$. Let $\vf$ be a partial coloring that assigns a color in
$L(v_1)\setminus (L(p_0)\cup L(p_k))$ to $v_1$. If $v_1$ is adjacent to a vertex $x$ by a crossed edge, then let $N'=(N\cap V(\gf))\cup\{x\}$,
otherwise let $N'=N\cap V(\gf)$.  By the minimality of $B$, we conclude that the target $B'=(\gf,P,N',\emptyset,\lf)$
violates (O).  Let $H$ be an obstruction in $B'$; by Lemma~\ref{lemma-cl-nearobst}, $H$ is one of \ob{M1}, \ob{N2}, \ob{N3} or \ob{P3}.
Note that $H$ contains a vertex $z$ such that $|\lf(z)|=3$, and by the choice of $\vf$, we conclude that $z\in N$ and $zv_1\in E(G)$.
However, since $H$ also contains a special subgraph, we obtain a contradiction with the condition (D) for $B$.
Therefore, $s\ge 2$.

Suppose for a contradiction that $v_1v_2\not\in E(G)$, and thus $v_1$ and $v_2$ are crossing-adjacent.
Note that $\ell(P)=2$ by the distance condition.
Let $\vf$ be a partial coloring that assigns a color from $L(v_1)\setminus L(p_0)$ to $v_1$
and the color from $L(p_0)$ to $p_0$.
Let $y$ be the vertex adjacent to $v_1$ by the crossed edge, and let $B'=(\gf,p_1p_2,(N\cap V(\gf))\cup \{y\}, M\cap E(\gf),\lf)$.
By the minimality of $B$, we conclude that $B'$ is not a valid target.  By Lemma~\ref{lemma-cl-nochord}, we have $|L(y)|=5$,
and if $|\lf(y)|<4$, then $y$ is adjacent to $p_0$; however, by Lemma~\ref{lemma-cl-noseptr},
$v_2$ would be adjacent to $p_0$, contrary to Lemma~\ref{lemma-cl-nochord}.  Therefore, we have $|\lf(y)|\ge 4$.
By Lemmas~\ref{lemma-cl-nochord} and \ref{lemma-cl-no3inp}, we conclude that only the condition (M) can be violated by $B'$.
In that case, $p_0$ and $v_1$ have a common neighbor $u\neq y$ adjacent to a vertex $w$ with $|L(w)|=3$.  This contradicts
Lemma~\ref{lemma-cl-2chord}.  Therefore, $v_1v_2\in E(G)$, and by symmetry, $v_{s-1}v_s\in E(G)$.

Suppose for a contradiction that $s=2$.  By symmetry, assume that if $v_2$ is incident with a crossed edge,
then $v_1$ is incident with a crossed edge as well.
If $v_1v_2\in M$, then let $\vf$ be an $L$-coloring of $v_1$ and $v_2$ such that $\vf(v_1)\not\in L(p_0)$ and
$\vf(v_2)\not\in L(p_k)$.  Otherwise, let $\vf$ be a coloring of $v_1$ by a color in $L(v_1)\setminus L(p_0)$ such that if
$|L(v_2)|=3$, then $\vf(v_1)\not\in L(v_2)\setminus L(p_k)$. Note that this is possible by Lemma~\ref{lemma-cl-basic}(f).
Let us remark that when $|L(v_2)\setminus \{\vf(v_1)\}|=2$, then $L(p_k)=\{\vf(v_1)\}$ and $\lf(v_2)=L(v_2)$
by the definition of $\lf$, and thus we always have $|\lf(v_2)|\ge 3$.  If $v_1$ is incident with a crossed
edge $v_1x$, then let $N'=N\cup \{x\}$; if $v_1$ is adjacent to a vertex $y\in N$, then let $N'=N\setminus\{y\}$; otherwise, let
$N'=N\cap V(\gf)$.
If $v_1$ and $v_2$ have a common neighbor $z$ belonging to $N$ and $|\lf(v_2)|=3$, then let $M'=M\cup \{v_2z\}$;
otherwise let $M'=M\cap E(\gf)$.  Let $B'=(\gf,P,N',M',\lf)$; by the minimality of $B$, the target $B'$ is not valid.

The choice of $\vf$, $M'$ and $N'$ ensures that $B'$ satisfies (S), (N), (M), (P), and (D).  It satisfies (T) by Lemma~\ref{lemma-cl-no3inp}.
If $B'$ violated (C), then $v_2$ would have to be incident with a crossing, and by the choice of $v_1$ and the distance condition,
the vertex $v_1$ would be incident with the same crossing, which then would not appear in $\gf$.  Therefore, $B'$ satisfies (C).
Hence, $B'$ violates (O) and $G$ contains a near-obstruction $H$.  By Lemma~\ref{lemma-cl-nearobst}, $H$ is \ob{M1}, \ob{N2}, \ob{N3}
or \ob{P3}.  Observe that $v_1v_2\not\in M$, since otherwise the distance between $v_1v_2$ and the special subgraph of $H$ (which
is also special in $G$) is at most $3$.  Every vertex with list of size three according to $\lf$ either belongs to $N$ or is
equal to $v_2$.  If $v_2\not\in V(H)$, then $H$ contains only one vertex with list of size three, hence $H$ is \ob{N2}.  However,
then $N$ contains two adjacent vertices, which is a contradiction.  Similarly, we exclude the case that $v_2\in V(H)$ and $H$ is
\ob{N3} or \ob{P3}.  Therefore $v_2\in V(H)$ and $H$ is \ob{M1} or \ob{N2}.  The former is excluded by Lemma~\ref{lemma-cl-nochord}.
If $H$ is \ob{N2}, then we have $V(G)=V(H)\cup \{v_1\}$ by Lemma~\ref{lemma-cl-noseptr}.  If $v_1$ is
incident with a crossed edge, then $G$ contains \ob{C2}.  On the other hand, if $v_1$ is not incident with a
crossed edge, then $|L(v_1)|=3$, $|L(v_2)|=4$, $|N|=1$ and $G$ is $L$-colorable.  This is a contradiction, and thus $s\ge 3$.

Suppose for a contradiction that $v_1$ and $v_2$ are not crossing-adjacent, $|L(v_1)|>3$, and $|L(v_2)|>3$.
We remove a color from the list of $v_1$; let $B'$ denote the resulting target.
If some edge $v_1x$ crosses an edge $e$, then $|L(x)|=5$ by Lemma~\ref{lemma-cl-nochord}, and both vertices incident
with $e$ have list of size five by Lemmas~\ref{lemma-cl-nopcr} and \ref{lemma-cl-crossnn}, hence $B'$ satisfies (C).
By Lemma~\ref{lemma-cl-nearobst}, no obstruction arises (since all vertices with lists of size three or four in the new list assignment
are contained in $V(F)\cup N$, implying that the corresponding near-obstruction would have at least two vertices in $V(F)\setminus V(P)$).
We conclude that $B'$ contradicts the minimality of $B$; hence, if $|L(v_1)|>3$ and $|L(v_2)|>3$, then $v_1$ and $v_2$
are crossing-adjacent.  A similar argument shows that in this case $|L(v_1)|=|L(v_2)|=4$.
\end{proof}

Consider a minimal counterexample $B=(G,P,N,M,L)$ with outer face $F$.
Let $k=\ell(P)$ and let $p_k\ldots p_1p_0v_1v_2\ldots v_s$ be the vertices contained in the boundary of $F$ in the cyclic order
around it, with $v_{s+1}=p_k$, $v_{s+2}=p_{k-1}$, \ldots.  Suppose that $v_1v_2,v_2v_3\not\in M$.
If $|L(v_1)|=3$ or $|L(v_2)|=3$, then let the set $X\subseteq V(F)\setminus V(P)$ and its
partial $L$-coloring $\vf$ be defined as in (X1)--(X4) in the proof of Theorem~\ref{thm-basic}.  Let us remark that the choice of $X$ and $\vf$
is performed independently on whether $v_2v_3$ or $v_3v_4$ is an edge, or whether they are crossing-adjacent
(note that when $v_3$ is crossing-adjacent to $v_2$ or $v_4$, then the case (X4) cannot apply by (C)).
If $|L(v_1)|>3$ and $|L(v_2)|>3$, then $v_1$ and $v_2$ are crossing-adjacent by Lemma~\ref{lemma-nearby}, and we define $X$ and $\vf$ as follows.
\begin{itemize}
\item[(X5)] If $|L(v_1)|=|L(v_2)|=4$ and $|L(v_3)|\neq 3$, then $X=\{v_1\}$ and $\vf(v_1)\in L(v_1)\setminus L(p_0)$ is chosen arbitrarily.
\item[(X6)] If $|L(v_1)|=|L(v_2)|=4$ and $|L(v_3)|=3$, then $X=\{v_2\}$ and $\vf(v_2)\in L(v_2)\setminus L(v_3)$ is chosen arbitrarily.
\end{itemize}
Let $m$ denote the largest index such that $v_m\in X$.
Let us note that $m=1$ in (X1) and (X5), $m=3$ in (X4), and $m=2$ otherwise.
Also, $X=\dom(\vf)$ in all cases except for (X4b), when $X=\{v_1,v_2,v_3\}$ and $\dom(\vf)=\{v_1,v_3\}$.
We say that the triple $(X,\vf,m)$ is the \emph{probe for $B$ at the $p_0$ side of $P$}.
The \emph{probe for $B$ at the $p_k$ side of $P$} is defined symmetrically, exchanging the role of $p_0$ with $p_k$, $v_1$ with $v_s$, \ldots

\begin{figure}
\begin{center}
\includegraphics[width=95mm]{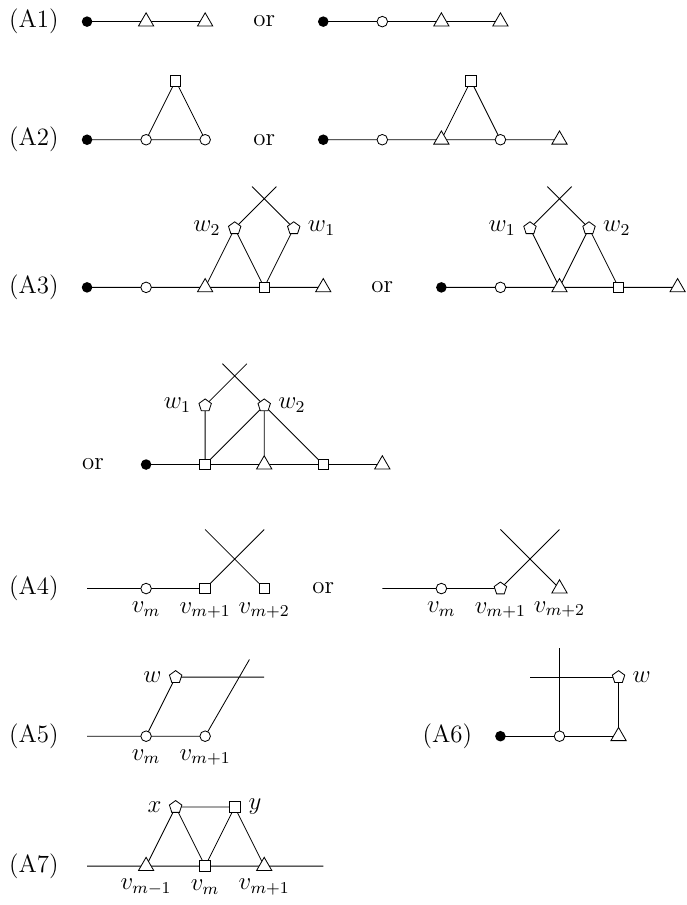}
\end{center}
\caption{Possible outcomes of Lemma~\ref{lemma-cl-a}.}
\label{fig-a}
\end{figure}

\begin{lemma}\label{lemma-cl-a}
Consider a minimal counterexample $B=(G,P,N,M,L)$ with outer face $F$.
Let $k=\ell(P)$ and let $p_k\ldots p_1p_0v_1v_2\ldots v_s$ be the vertices contained in the boundary of $F$ in the cyclic order
around it.  If $v_1v_2,v_2v_3\not\in M$, then let $(X,\theta,m)$ be the probe for $B$
at the $p_0$ side of $P$.  One of the following cases holds, see Figure~\ref{fig-a} for the illustration of the possibilities.
\begin{itemize}
\item[\rm (A1)] $v_1v_2\in M$ or\/ $v_2v_3\in M$.
\item[\rm (A2)] Either $v_1$ and $v_2$, or two distinct vertices in $\dom(\theta)$ have a common neighbor in $N$.
\item[\rm (A3)] There exists a crossing $q$ and two crossing-adjacent vertices $w_1,w_2\in V(G_q)$ such that
$V(G_q)\cap X=\emptyset$, $w_1$ has a neighbor in $\dom(\theta)$ and $w_2$ has two neighbors in $\dom(\theta)$.
\item[\rm (A4)] $v_mv_{m+1}\in E(G)$, there exists a crossing $q$ such that $V(G_q)\cap X=\emptyset$ and $v_{m+1},v_{m+2}\in V(G_q)$,
and either $|L(v_{m+1})|=|L(v_{m+2})|=4$ or $|L(v_{m+1})|=5$ and $|L(v_{m+2})|=3$.
\item[\rm (A5)] $v_mv_{m+1}\in E(G)$, $|L(v_{m+1})|\in\{3,4\}$ and there exists a crossing $q$ such that $V(G_q)\cap X=\emptyset$,
$v_{m+1}\in V(G_q)$ and a neighbor $w\not\in V(F)$ of $v_m$ is crossing-adjacent to $v_{m+1}$.
\item[\rm (A6)] $v_1\not\in X$ and there exists a crossing $q$ such that $V(G_q)\cap X=\emptyset$, $v_1\in V(G_q)$
and a neighbor $w\not\in V(F)$ of $v_2$ is crossing-adjacent to $v_1$.
\item[\rm (A7)] $|X|\ge 2$ and there exists a path $v_{m-1}xyv_{m+1}$, where $x$ and $y$ are neighbors of $v_m$ and $y\in N$.
\end{itemize}
\end{lemma}
\begin{proof}
We can assume that (A1) does not hold, and thus the probe $(X,\theta,m)$ is defined.

Let us consider the graph $G_0'=G-\dom(\theta)$ with the list assignment $L_\theta$, and let $G'$ be the graph obtained from $G'_0$ by repeatedly removing
vertices whose list is larger than their degree. If $\dom(\theta)\ne X$, then we have case (X4b) and $X\setminus \dom(\theta)=\{v_2\}$.
In this case, if $v_2$ were incident with a crossing, then Lemma~\ref{lemma-cl-noseptr} implies that an edge incident with $v_2$ crosses
an edge incident with $v_1$ or $v_3$, and since $|L(v_1)|=|L(v_3)|=4$ and $|L(v_2)|=3$, this would contradict (C).
Hence, $v_2$ is not incident with a crossing and its degree in $G_0'$ is 1, and since $|L_\theta(v_2)|\ge 2$, the vertex $v_2$
is not present in $G'$.  This shows that $G'\subseteq G-X$.  Observe also that $G'$ is not $L_\theta$-colorable.

Note that every vertex $v\in V(G')$ not incident with the outer face of $G'$ satisfies $|L_\theta(v)|\ge 4$ (and $|L_\theta(v)|=4$ only if
either $v\in N$ or $v$ is incident with a crossed edge with the other end in $\dom(\theta)$).  Let $N'$ be the set of vertices
$v\in V(G')$ not incident with the outer face of $G'$ such that $|L_\theta(v)|=4$.

If a vertex $v\in V(G')\setminus V(P)$ satisfies $|L_\theta(v)|\le 2$, then
$v\not\in V(F)$ by the choice of $X$ and $\theta$ and by Lemma~\ref{lemma-cl-nochord}.  Since $|\dom(\theta)|\le 2$, it follows
that $v\in N$ and $v$ has two neighbors in $\dom(\theta)$, and thus (A2) holds.
Hence, we can assume that $|L_\theta(v)|\ge 3$ for all $v\in V(G')\setminus V(P)$.

Let $M'$ consist of all edges of $G'$ that join vertices with list of size three, and let $B'=(G',P,N',M',L_\theta)$.
Note that $B'$ satisfies (S), (N), and (M) by the choice of $N'$ and $M'$.  Since $L_\theta(p)=L(p)$ for all $p\in V(P)$,
the target $B'$ satisfies (P).  By Lemma~\ref{lemma-cl-no3inp}, $B'$ satisfies (T).

Now, let us consider property (C). Let $q$ be a crossing in $G'$ and suppose that (C) is violated at $q$,
i.e., there exist distinct $u,v\in V(G_q)$ such that $|L_\theta(u)|=3$ and $|L_\theta(v)|\in \{3,4\}$.
Note that each of $u$ and $v$ is either incident with $F$ or adjacent to a vertex in $\dom(\theta)$.
Let us distinguish several cases.
\begin{itemize}
\item If both $u$ and $v$ belong to $F$, then by Lemmas~\ref{lemma-cl-nochord} and \ref{lemma-cl-crossnn} we have that
$u$ and $v$ are crossing-adjacent,
$\{u,v\}=\{v_{m+1},v_{m+2}\}$ and $L(v_{m+2})=L_\theta(v_{m+2})$.  It follows that $|L(v_{m+1})|\neq 3$ and that (A4) holds.
\item If $|\{u,v\}\cap V(F)|=1$ and $u$ and $v$ are not crossing-adjacent, then since $V(G_q)\cap X=\emptyset$,
Lemma~\ref{lemma-cl-2chord} implies that (A4) holds.
\item If $u\in V(F)$, $v\not\in V(F)$, and $u$ and $v$ are crossing-adjacent,
then we apply Lemma~\ref{lemma-cl-c2chord}. The outcome (a) of Lemma~\ref{lemma-cl-c2chord} gives (A5) or (A6). The outcome (b) gives a vertex $w\in X$
that is adjacent to $v$, and a vertex $z$ with $|L(z)|=3$ that is adjacent to $u$, $v$ and $w$. Therefore, $|L(u)|\ne3$, so $u$ has a neighbor in $X$,
necessarily equal to $z$.  Note that $|X|\ge 2$ and $|L(z)|=3$ can only be satisfied
in the subcase (X4a) of the definition of $X$ with $z=v_2$, $w=v_3$ and $u=v_1$, and thus (A6) holds.
\item If $u\not\in V(F)$, $v\in V(F)$, and $u$ and $v$ are crossing-adjacent, then $u$ has two neighbors
in $\dom(\theta)$.  
By Lemma~\ref{lemma-cl-c2chord}, one of the neighbors of $u$ in $X$ is also adjacent to $v$ and has list of size three, and by the choice
of $X$, we conclude that (A6) holds.
\item Finally, if $u,v\not\in V(F)$, then $|\dom(\theta)|=2$, $u$ is adjacent to both vertices in $\dom(\theta)$ and $v$ is adjacent to
at least one of them.  Hence, $u$ and $v$ are crossing-adjacent by Lemma~\ref{lemma-cl-noseptr}
and the fact that $V(G_q)\cap X=\emptyset$, and (A3) holds.
\end{itemize}

Therefore, we can assume that $B'$ satisfies (C).  Let us now consider the newly created special subgraphs in $B'$.
\begin{itemize}
\item If $v\in N'\setminus N$, then $v$ is adjacent to a vertex of $\dom(\theta)$ by an edge containing a crossing $q$, and only one vertex of $G_q$ belongs
to $X$.  Therefore, $|N'\setminus N|\le 1$.
\item If $xy\in M'\setminus M$, then $|L_\theta(x)|=|L_\theta(y)|=3$, and at least one of $x$ and $y$ has a list of size at least $4$ in the list
assignment $L$.

Suppose that $x,y\not\in N$.  If $x,y\not\in V(F)$, then both $x$ and $y$ have two neighbors in $\dom(\theta)$.
By Lemma~\ref{lemma-cl-2chord}, this implies that $x$ and $y$ are crossing-adjacent in $G$ via the edges joining $x,y$ with $\dom(\theta)$.
If $x,y\in V(F)$, then by Lemma~\ref{lemma-cl-nochord} we can assume that $x=v_{m+1}$ and $y=v_{m+2}$; but then $|L_\theta(x)|\neq 3$ or
$|L_\theta(y)|\neq 3$ by the choice of $X$, which is a contradiction.  Finally, suppose that say $x\in V(F)$ and $y\not\in V(F)$; then
$y$ has two neighbors in $\dom(\theta)$ and, in particular, we have cases (X2) or (X4).
By Lemma~\ref{lemma-cl-2chord}, we have $x\in \{v_1,v_{m+1}\}$.  If $x=v_1$, then $y$ would be a
common neighbor of $v_1$, $v_2$ and $v_3$, contradicting the choice of $X$ (assumptions of (X4b) are satisfied,
hence we would have $v_1\in X$).  If $x=v_{m+1}$, then since $|L(v_m)|\ge 4$, Lemma~\ref{lemma-cl-2chord} implies that
$|L(v_m)|=4$ and $v_m$ is incident with an edge crossing either the edge $v_{m-1}y$ or the edge $v_{m+1}y$.
However, by the choice of $X$ we have $|L(v_{m-1})|=|L(v_{m+1})|=3$, contradicting (C).

Therefore, either $\{x,y\}\cap N\neq\emptyset$, or there exists a crossing $q$ such that $\{x,y\}=V(G_q)\setminus X$.
\end{itemize}

It follows that $d(S_1,S_2)\ge 7+r(S_1)+r(S_2)$ whenever $S_1$ is a special subgraph of $G$ that is also special in $G'$
and $S_2$ is any special subgraph of $G'$.  Suppose now that $S_1$ and $S_2$ are both distinct newly created special subgraphs in $G'$.
Note that $|N'\setminus N|\le 1$ and if $N'\setminus N\neq\emptyset$, then $M'\setminus M=\emptyset$.  It follows that
$S_1,S_2\in M'\setminus M$.  As proved in the previous paragraph, each edge in $M'\setminus M$ is incident with a special subgraph in $G$
that is adjacent to $X$. By the distance condition, we conclude that there exists a path $xyz$ in $G'$ such that $|L_\theta(x)|=|L_\theta(y)|=|L_\theta(z)|=3$
and $y\in N$.  Note that at most one of $x$ and $z$ can have two neighbors in $\dom(\theta)$, as otherwise
$G$ would contain a crossing at distance at most one from $y$; thus we may assume that $x\in V(F)$.  Since $y$ has a neighbor in
$\dom(\theta)$, Lemma~\ref{lemma-cl-2chord} implies that
$x\in \{v_1,v_{m+1}, v_{m+2}\}$.  If $x=v_{m+2}$, then we would have $|L(v_{m+1})|=|L(x)|=3$ and $v_{m+1}x\in M$ would
be at distance one from $y\in N$, which is a contradiction; therefore, $x\neq v_{m+2}$.  If $x=v_1$, then (A2) holds.
Finally, suppose that $x=v_{m+1}$.  In this case, $z\not\in V(F)$ has two neighbors in $\dom(\theta)$, and $X$ was
chosen according to (X2) or (X4).  However, then $|L(v_m)|\ge 4$, hence
$\deg(v_m)\ge 4$ and $v_m$ is adjacent to $y$ by Lemma~\ref{lemma-cl-basic}, and (A7) holds.
Hence, we can assume that $B'$ satisfies (D).

By the minimality of $B$, we conclude that $B'$ violates condition (O), and thus $B$ contains a near-obstruction $H$.
By Lemma~\ref{lemma-cl-nearobst}, $H$ is one of \ob{M1}, \ob{N2}, \ob{N3} or \ob{P3}.
\begin{itemize}
\item If $H$ is \ob{M1},
then let $xy$ be the edge of $H$ that belongs to $M'$, where $x$ is adjacent to $p_2$.
Note that $x,y\not\in V(F)$ by Lemma~\ref{lemma-cl-nearobst} and $xy\not\in M$.  If $x\not\in N$, then
$x$ has two neighbors $v_i$ and $v_j$ in $\dom(\theta)$, where $i<j$.  By Lemma~\ref{lemma-cl-2chord} applied to $p_2xv_i$,
we have $j=i+1$ and by the choice of $X$, $|L(v_j)|=4$; hence $\deg(v_j)\ge 4$, and by Lemma~\ref{lemma-cl-basic},
$v_j$ is incident with a crossing and thus $y\not\in N$.
Consequently, $y$ is also adjacent to $v_i$ and $v_j$.
However, note that $|L(v_i)|=3$, contradicting (C) for $G$.  Therefore, $x\in N$ is adjacent to $v_j$, and
$y$ is adjacent to both $v_i$ and $v_j$.  By Lemma~\ref{lemma-cl-2chord} applied to $p_0yv_j$, we have $i=1$, $j=2$
and $|L(v_1)|=3$, and by Lemma~\ref{lemma-cl-2chord} applied to $p_2xv_2$, we have that $s=3$ and $|L(v_3)|=3$. However, then $G$ is $L$-colorable.

\item Next, suppose that $H$ is \ob{N2} and let $x$ and $y$ be the vertices in the outer face of $H$ such that $|L_\theta(x)|=3$ and $|L_\theta(y)|=4$. By Lemma~\ref{lemma-cl-nochord}, $y\notin V(F)$.
If $x\in V(F)$, then by Lemma~\ref{lemma-cl-2chord} we have $s\le 2$, which is a contradiction, hence $x\not\in V(F)$.
Thus $x$ has two neighbors in $\dom(\theta)$ and $y$ has one, and by Lemma~\ref{lemma-cl-2chord} we conclude that $s=3$
and $|L(v_1)|=|L(v_3)|=3$. It follows that $X=\{v_1,v_2\}$, $x$ is adjacent to $v_1$ and $v_2$, and $y$ is adjacent to $v_2$.
There are two cases, either $v_2$ is incident with a crossed edge or $|N|=1$; in both of them, $G$ is $L$-colorable.

\item If $H$ is \ob{N3}, then let $xyz$ be the path in the outer face of $H$ such that $|L_\theta(x)|=|L_\theta(z)|=3$, $|L_\theta(y)|=4$ and $z$ is adjacent to $p_1$.
By Lemma~\ref{lemma-cl-nochord}, $z\not\in V(F)$, thus $z$ has two neighbors $w_1,w_2\in\dom(\theta)$, and by
Lemma~\ref{lemma-cl-2chord}, we can assume that the neighbors of $w_1$ are $w_2$, $z$ and an endvertex of $P$, and that
$|L(w_1)|=3$.  Since $y\not\in V(F)$ and $|L_\theta(y)|=4$, $y$ is adjacent to $w_2$.  Since $x$ cannot have more than one neighbor in
$\dom(\theta)$, we have $x\in V(F)$.  If $xw_2\not\in E(G)$, then $|L(x)|=|L_\theta(x)|=3$, and thus $x$ has no neighbor with list of size three
(since $H$ is an obstruction in $B'$, $x$ is either adjacent to a vertex in $N$, or incident with a crossing, and thus it is not incident
with an edge of $M$).  Lemma~\ref{lemma-cl-2chord} applied to the $2$-chord $xyw_2$ implies that the edge $xy$ is crossed by an edge incident with $w_2$. 
However, then $\deg(w_2)=4$ and (C) implies that $|L(w_2)|=5$, which is a contradiction.
We conclude that $xw_2\in E(G)$.  By the choice of $X$, $|L(x)|=3$.  Again, we distinguish two cases depending on whether
$w_2$ is incident with a crossed edge (in this case $|L(w_2)|=5$ by (C)) or $|N|=1$.  In both cases, $G$ is $L$-colorable.

\item If $H$ is \ob{P3}, then two of the vertices of $H$ have two neighbors in $\dom(\theta)$, hence $G$ contains a crossing at distance at most one
from $P$, which contradicts the distance condition for $B$.
\end{itemize}
These contradictions imply that at least one of (A1)--(A7) holds.
\end{proof}

Each case among (A1)--(A7) in Lemma~\ref{lemma-cl-a} contains a special subgraph.
Thus, a minimal counterexample $B=(G,P,N,M,L)$ with $P=p_0\ldots p_k$ contains a special subgraph $S$ whose
distance from $p_0$ is at most $2+r(S)$.  Consequently, $\ell(P)=k=2$.  Next, we consider the probe
$(X',\theta', b)$ at the $p_k$ side of $P$.  This probe satisfies one of the conditions symmetric to (A1)--(A7)
(satisfying one of (A1)--(A7) with $v_i$ replaced by $v_{s+1-i}$, and $X$ by $X'$), which we will refer to as (A1')--(A7').
In particular, there exists a special subgraph $S'$ whose distance from $p_2$ is at most $2+r(S')$.  It follows that
$d(S,S')\le 6+r(S)+r(S')$, and by (D), we conclude that $S=S'$.

To avoid repeating the definitions each time, let us fix the following notation ($\dagger$) for the following lemmas.
\begin{itemize}
\item $B=(G,P,N,M,L)$ is a minimal counterexample, with $P=p_0p_1p_2$.
\item $p_2p_1p_0v_1v_2\ldots v_s$ are the vertices contained in the boundary of the outer face $F$ of $G$ in the cyclic order.
\item If (A1) does not hold, then $(X,\theta,m)$ is a probe for $B$ at the $p_0$ side of $P$.
\item If (A1') does not hold, then $(X',\theta',b)$ is a probe for $B$ at the $p_2$ side of $P$.
\item $S$ is the special subgraph in $B$ at distance at most $4$ from $P$.
\end{itemize}

\begin{lemma}\label{lemma-cl-iscross}
With the notation {\rm ($\dagger$)}, $S$ is a crossing; hence, $B$ satisfies one of {\rm (A3)}--{\rm (A6)} and one of {\rm (A3')}--{\rm (A6')}.
\end{lemma}
\begin{proof}
Suppose for a contradiction that $B$ does not contain such a crossing.  Hence, it satisfies (A1), (A2), or (A7).

Suppose first that $B$ satisfies (A1), and thus $S$ is an edge of $M$ and $B$ also satisfies (A1').
It follows that $s\le 4$.  Since $s\ge 3$, we can by symmetry assume that $S=v_2v_3$.
If $v_2$, $v_3$ and $v_i$ have no common neighbor for $i\in \{1,4\}$ ($i=1$ if $s=3$), then let $\vf$ be an arbitrary $L$-coloring of $S$
(such that $\vf(v_3)\notin L(p_2)$ if $s=3$).  Let $B'=(\gf,P,N,\emptyset,\lf)$.
Observe that $B'$ cannot contain an obstruction since its special subgraph would be a special subgraph in $B$, too close to the special subgraph $S$.
Now it is easy to check using previously proved properties of $B$ that $B'$ satisfies all conditions of Theorem \ref{thm-maingen}.
(The same reasoning will be applied in the sequel without repeating it.)
Therefore, $B'$ with the list coloring $\lf$ is a counterexample to Theorem~\ref{thm-maingen}, contradicting the minimality of $B$.
Thus, by symmetry, we may assume that $v_1$, $v_2$ and $v_3$ have a common neighbor $w$.  In that case, $w$ is not adjacent to $v_4$ by Lemma~\ref{lemma-cl-2chord}.
Let $\vf$ be an $L$-coloring of $v_1$ and $v_3$ such that $\vf(v_1)\not\in L(p_0)$, $\vf(v_3)\not\in L(p_2)$ and $|\lf(v_2)|\ge 2$.
Observe that $(G-\{v_1,v_2,v_3\},P,N,\emptyset,\lf)$ is a counterexample contradicting the minimality of $B$,
since any $\lf$-coloring of $G-\{v_1,v_2,v_3)$ can be extended to $v_2$ by using a color in $\lf(v_2)$, and can henceforth be extended to $G$.
Therefore, $B$ does not satisfy (A1).

Let us now consider the case that $B$ satisfies (A2) or (A7), and thus $S\in N$.  Let $i$ and $j$ be the smallest and the largest integer,
respectively, such that $S$ is adjacent to $v_i$ and $v_j$.  By Lemma~\ref{lemma-cl-2chord} we have $j\in\{i+1,i+2\}$.
We consider the two possible values of $j$ separately:

\begin{itemize}
\item Suppose first that $j=i+1$.  If $|X|\ge 2$, then $|L(v_m)|\ge 4$ and $|L(v_{m-1})|=3$, hence (A7) and (A7') cannot both be true.
If both (A2) and (A2') hold, then since $s\ge 3$, we can assume that $v_2,v_3\in X$ have a common neighbor in $N$.
By the choice of $X$, we have $|L(v_4)|=3$, hence $s=4$ and $v_2,v_3\in X'$.  However, then $|L(v_1)|\ge 4$ by the choice of $X$
and $|L(v_1)|=3$ by the choice of $X'$, which is a contradiction.

Hence, we can assume that $B$ satisfies (A7) and (A2'); then we either have $s=m+1$, or we have $s=m+2$ and $X'=\{v_m, v_{m+1}\}$.
If there exists an $L$-coloring $\vf$ of $v_{m-1}$ and $v_{m+1}$ such that their colors are distinct from the colors of their neighbors
in $P$ and $|\lf(v_m)|\ge3$, then $B'=(G-\{v_{m-1},v_m,v_{m+1}\},P,N\setminus\{S\},\emptyset,\lf)$ contradicts the minimality of $B$;
observe that $B'$ satisfies (O), since no special subgraph of $G$ is at distance at most two from $S$.  A new special subgraph (an
edge joining two vertices with lists of size three) would appear in $B'$ only if $S$ were adjacent to
$v_{m+2}$, which is not the case since $j=i+1$.

We conclude that no such coloring exists.  Observe that this is only possible if both $v_{m-1}$ and $v_{m+1}$ have
a neighbor in $P$ (i.e., $m=2$ and $s=3$), $|L(v_1)|=|L(v_3)|=3$, and $L(v_2)$ is the disjoint union of
$L(v_1)\setminus L(p_0)$ and $L(v_3)\setminus L(p_2)$.
Let $w'$ be the common neighbor of $S$ and
$v_1$.  Suppose that there exists a color $c\in L(w')$ different from the colors of the neighbors of $w'$ in $P$
such that either $c\not\in L(v_2)$, or $v_1$ has degree three and $c\not\in L(v_1)\setminus L(p_0)$.
In this case, we let $\vf$ be the partial coloring such that $\vf(w')=c$ and let $G'=G-\{w',v_2\}$ if $c\not\in L(v_2)$
and $G'=G-\{w',v_1,v_2\}$ if $c\in L(v_2)$.  Observe that $G'$ is not $\lf$-colorable.  We conclude that
$(G',P,N,M\cup \{Sv_3\},\lf)$ is a counterexample contradicting the minimality of $B$ (the condition (O) holds by
Lemma~\ref{lemma-cl-nearobst}, the distance condition and Lemma~\ref{lemma-cl-nochord}).
Therefore, no such color $c$ exists.

Since $|L(w')|>|L(v_2)|$, it follows that
$w'$ has a neighbor in $P$.  By Lemma~\ref{lemma-cl-2chord}, $w'$ is not adjacent to $p_2$, hence it is adjacent to $p_0$ or
$p_1$.  However, then Lemmas~\ref{lemma-cl-nosepq} and \ref{lemma-cl-nochord} imply that $v_1$ has degree three, and
since $|L(v_1)\setminus L(p_0)|=2$ and $w'$ has at most two neighbors in $P$, the color $c$ exists.
This is a contradiction.

\item It remains to consider the case when $j=i+2$. In this case $S$ is adjacent to $v_i$ and $v_{i+2}$, and by Lemma~\ref{lemma-cl-2chord} we conclude that $v_{i+1}$ is a vertex of degree 3 with neighbors $v_i$, $v_{i+2}$, and $S$. Thus, $|L(v_{i+1})|=3$.
Suppose first that $B$ satisfies both (A7) and (A7'). If there exists a coloring $\vf$ of $S$ by a color
different from the colors of its neighbors in $P$ such that $\vf(S)\not\in L(v_i)\cap L(v_{i+1})\cap L(v_{i+2})$, then
$(G-\{S,v_i,v_{i+1}, v_{i+2}\},P, N\setminus\{S\},\emptyset,\lf)$ is a counterexample contradicting the minimality of $B$.
Otherwise, since $S$ is not adjacent to $p_0$ or $p_2$ by Lemma~\ref{lemma-cl-2chord}, we conclude that $S$ is adjacent to $p_1$ and
$L(S)\setminus L(p_1)=L(v_{i+1})\subseteq L(v_i)\cap L(v_{i+2})$.  However, in this case we let $\vf=\theta$ (where $\theta$ is
the coloring from the probe for $B$ at the $p_0$ side of $P$),
and note that $\vf(v_i)\not\in L(v_{i+1}) = L(S)\setminus L(p_1)$.  Hence, the construction of $\lf$ ensures that $|\lf(S)|=4$, and we conclude that $(G-X,P,N\setminus\{S\},\emptyset,\lf)$
is a counterexample contradicting the minimality of $B$.

Therefore, $B$ does not satisfy both (A7) and (A7'), and by symmetry, we can assume that $B$ satisfies (A2').
Let us first consider the case that $B$ satisfies (A2).  Note that
$v_{i+2}\not\in X$, as otherwise $|L(v_{i+3})|=3$ by the choice of $X$, and thus $v_{i+1}\not\in X'$, contradictory
to the assumption that $B$ satisfies (A2').  Symmetrically, $v_i\not\in X'$.  Since $|L(v_{i+1})|=3$, we cannot
have $\{v_i,v_{i+1}\}\subseteq X$, and since (A2) holds, we have $i=1$ and symmetrically, $s=3$.  Observe that we cannot color $S$ by a color
$\vf(S)\not\in L(v_{i+1})$, as otherwise $(G-\{S,v_{i+1}\},P,N\setminus \{S\},\emptyset,\lf)$ would contradict the minimality of $B$.
Therefore, $S$ has a neighbor in $P$, and by Lemma~\ref{lemma-cl-2chord}, this neighbor is $p_1$.  By Lemma~\ref{lemma-cl-nosepq},
the $4$-cycle $p_1p_0v_1S$ is not separating, and by Lemma~\ref{lemma-cl-nochord}, $v_1$ has degree three.  This contradicts Lemma~\ref{lemma-cl-basic}(a),
since $|L(v_1)|>3$.

Therefore, $B$ satisfies (A7).  Note that $v_{i+1}$ cannot be the element of $X'$ with the smallest index, and thus
$i+2=s$.  As before, we exclude the case that $S$ can be colored by a color not belonging to $L(v_i)\cap L(v_{i+1})$,
hence $S$ has a neighbor in $P$.  By Lemma~\ref{lemma-cl-2chord}, $S$ is adjacent
to $p_1$.  However, by Lemma~\ref{lemma-cl-nosepq}, the $4$-cycle $p_1Sv_{i+2}p_2$ is not separating, and by Lemma~\ref{lemma-cl-nochord},
$v_{i+2}$ is not adjacent to $p_1$.  Thus, $v_{i+2}$ has degree three and list of size at least four, which is a contradiction.
\end{itemize}

Therefore, $B$ does not satisfy (A2) or (A7), either.
\end{proof}

Note that if $B$ satisfies (A4) or (A4'), then $|V(G_q)\cap V(F)|=2$ by Lemma~\ref{lemma-cl-nochord}.  If $B$ satisfies (A6) or (A6'),
then by Lemmas~\ref{lemma-cl-nochord}, \ref{lemma-cl-nopcr} and \ref{lemma-cl-crossnn} we have $|V(G_q)\cap V(F)|=1$.  If $B$ satisfies
(A3) or (A3'), then by Lemmas~\ref{lemma-cl-2chord}, \ref{lemma-cl-nochord} and \ref{lemma-cl-nopcr}, we have $|V(G_q)\cap V(F)|\le 1$, and if $B$ satisfies (A5)
or (A5') then $1\le |V(G_q)\cap V(F)|\le 2$.

\begin{lemma}\label{lemma-noa3}
With the notation {\rm ($\dagger$)}, $B$ satisfies neither {\rm (A3)} nor {\rm (A3')}.
\end{lemma}
\begin{proof}
Suppose for a contradiction that $B$ satisfies (A3).  Let $w_1$ and $w_2$ be as in the description of (A3).
Note that $w_2$ is adjacent to $v_{m-1}$ and $v_m$ (even if $v_{m-1}\not\in\dom(\theta)$, in the case (X4b))
and that $|L(v_{m-1})|=|L(v_{m+1})|=3$.

Let $q$ denote the crossing contained in $S$. Let us first consider the case that $|V(G_q)\cap V(F)|=\emptyset$.
In this case $B$ satisfies (A3'), i.e., there exists $w_2'\in V(G_q)$ adjacent to $v_b$ and $v_{b+1}$,
and another vertex $w'_1$ of $G_q$ crossing-adjacent to $w'_2$ with a neighbor in $\dom(\theta')$.
Since $|L(v_b)|\neq 3$, we have $b\notin\{m-1,m+1\}$. Consequently, $|X\cap X'|\le 1$, and $w_2'\neq w_2$
by Lemma~\ref{lemma-cl-2chord}.

We now distinguish two cases regarding whether $w_2$ is adjacent or crossing-adjacent to $w'_2$ in $G_q$.
\begin{itemize}
\item Suppose that $w_2w'_2$ is a crossed edge.
Then $b\ne m$ by Lemma~\ref{lemma-cl-noseptr}
and the assumption that $G_q$ is disjoint with $F$; thus $b\ge m+2$.
Let $G_1$ and $G_2$ be the $v_mw_2w'_2v_b$-components of $G$,
such that $P\subset G_1$.  Since $w_1$ and $w_2$ are crossing-adjacent, we have $w_1\neq w'_2$, and symmetrically $w'_1\neq w_2$.
By Lemma~\ref{lemma-cl-2chord}, if $w_1=w'_1$, then $w_1$ belongs to $G_2$.  Hence, by symmetry,
we can assume that $w_1$ belongs to $G_2$, and thus $w_1$ is adjacent to $v_m$.

If $w_1$ is adjacent to $v_b$, then $b=m+2$ by Lemma~\ref{lemma-cl-2chord}.
Let $T=\{v_m,v_{m+1},v_{m+2},w_1\}$. By using Lemma~\ref{lemma-cl-nosepnearc} it is easy to see that
$|L(t)| = \deg(t)$ for each $t\in T\setminus \{w_1\}$ and that $\deg(w_1)\le 6$.
By the minimality of $B$, there exists an $L$-coloring $\vf$ of $G-T$.  Consider the subgraph $G'$ of $G$ induced by $T$
with the list assignment $\lf$.  We have $|\lf(v_{m+1})|=3$ and $|\lf(z)|\ge 2$ for $z\in T\setminus \{v_{m+1}\}$.
If $\lf(w_1)\neq \lf(v_m)$, then we color $w_1$ by a color in $\lf(w_1)\setminus \lf(v_m)$ and extend this coloring to
the rest of $G'$.  Similarly, $G'$ is $\lf$-colorable if $\lf(w_1)\neq \lf(v_{m+2})$.  If
$\lf(v_m)=\lf(w_1)=\lf(v_{m+2})$, then we color $v_{m+1}$ by a color in $\lf(v_{m+1})\setminus \lf(w_1)$ and
again we can extend this to an $\lf$-coloring of $G'$.  It follows that $G$ is $L$-colorable, which is a contradiction.

Therefore, $w_1$ is not adjacent to $v_b$, and in particular $w_1\neq w'_1$ and $w'_1\in V(G_1)$.
Let $\vf$ be an $L$-coloring of $G_1$, which exists by the minimality of $B$.
Since $w_1$ is not adjacent to $v_b$, note that $w_1$ has at most three neighbors in $G_1$ different from $w'_2$.
Hence, we can additionally choose a color $\vf(w_1)$ for $w_1$ different from the colors of its neighbors in $G_1$
so that $\vf(w_1)\neq \vf(w'_2)$. Let $G'_2=G_2-w_2+w_1w'_2$ and $P' = v_m w_1 w_2' v_b$, let $L'$
be the list assignment for $G'_2$ obtained from $L$ by setting $L'(z)=\{\vf(z)\}$ for $z\in V(P')$, and let
$B'_2=(G'_2, P', N\cap V(G'_2), M\cap E(G'_2), L')$.
Note that the added edge $w_1w_2'$ of $G'_2$ can be drawn without crossings following the crossed edges of $G$ that are no longer in
$G_2'$. Note that $\vf$ does not extend to an $L$-coloring of $G'_2$, and thus the target $B'_2$ is not valid.
Note that $B'_2$ may only violate (T) or (O).  In the former case, the vertex violating (T) must be $v_{m+1}$ and we would have $b=m+2$.
Consequently, $v_b$ would have degree three by Lemma~\ref{lemma-cl-nosepnearc}, which is a contradiction since $|L(v_b)|\ge 4$.  In the
latter case, since $|L(v_{m+1})|=|L(v_{b-1})|=3$ and $v_b$ has degree at least three in $G'_2$, we have that $G'_2$ is equal to \ob{P5} or \ob{P6}.
In both cases, any $L$-coloring of $G_1-\{v_m,v_b\}$ would extend to an $L$-coloring of $G$, a contradiction.

\begin{figure}
\begin{center}
\includegraphics[width=115mm]{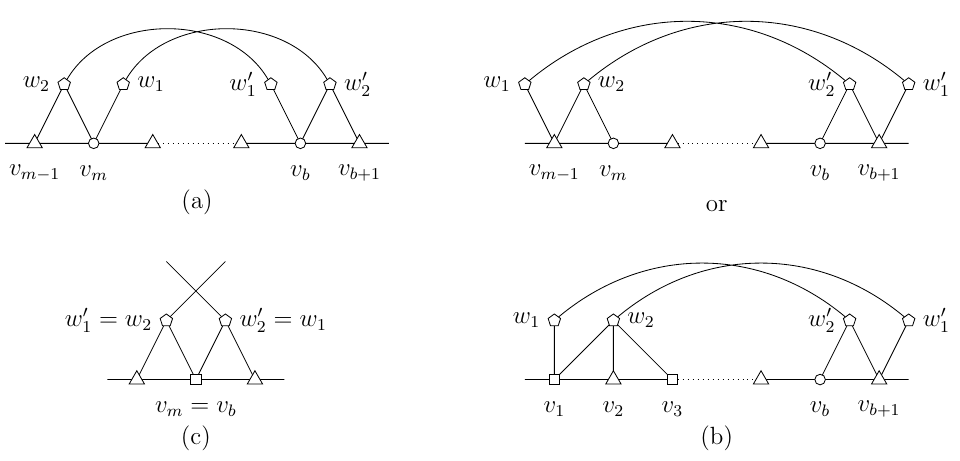}
\end{center}
\caption{Subcases when $w_2$ and $w_2'$ are crossing-adjacent.}
\label{fig-A4}
\end{figure}

\item Suppose now that $w_2$ is crossing-adjacent to $w'_2$.  Let $G_1$ and $G_2$ be the subgraphs of $G$ intersecting
in $\{v_b,w'_2,w_2,v_m\}$, where $P\subset G_1$ and $G_1\cup G_2$ is equal to $G-E(G_q)$.  We have two subcases:
either $b>m$ or $b=m$.
\begin{itemize}
\item If $b>m$, then Lemma~\ref{lemma-cl-2chord} implies that $w'_2$ has no neighbor in $X$, and thus $w_1\neq w'_2$.
Symmetrically, $w'_1\neq w_2$.  Since $w_1$ is crossing-adjacent to $w_2$, and $w'_1$ is crossing-adjacent to $w'_2$,
we conclude that the edges of $G_q$ are $w_1w'_2$ and $w'_1w_2$.

If $w_1,w'_1\notin V(G_1)$ (see Figure~\ref{fig-A4}(a)), then $w_1v_m,w'_1v_b\in E(G)$. Let $\vf$ be an $L$-coloring of
$G_1+\{w_1,w'_1,w_1w'_2,w'_1w_2,w_1w'_1\}$ which exists by the minimality of $G$, and note that $\vf$ does not extend to an $L$-coloring of $G'_2=G_2+w_1w'_1$.
Let $P'=v_mw_1w_1'v_b$ and let $L'$ be the list assignment for $G'_2$ obtained from $L$ by setting $L'(z)=\{\vf(z)\}$ for
$z\in V(P')$.  Observe that it is possible to choose $\vf$ so that $B'_2=(G'_2,P',N\cap V(G'_2), M\cap E(G'_2), L')$ is a counterexample
contradicting the minimality of $B$ (once the coloring of $G_1-\{v_m,v_b\}$ is fixed,
we still have two possible choices for the colors of $v_m$ and $v_b$ and three possible choices for the colors of $w_1$ and $w'_1$,
which suffices to ensure that $B'_2$ satisfies (T) and (O)).
This is a contradiction.  The case that $w_1,w'_1\in V(G_1)$ (see Figure~\ref{fig-A4}(b)) is excluded similarly.

\item If $b=m$, then let $w_2z$ and $w_2'z'$ be the edges of $G_q$ (note that we have $w_1=w'_2$ and $w'_1=w_2$).
Suppose that $z,z'\in V(G_2)$. Note that $V(G_2)\neq \{z,z',w_2,w'_2, v_m\}$, since otherwise $z$ would have degree
at most four and $|L(z)|=5$.  Therefore, the subgraph of $G$ induced by $V(G_1)\cup \{z,z'\}$ has an $L$-coloring $\psi$ by the minimality
of $G$.  Let $L'$ be the list assignment for $G'_2=G_2-\{z,z'\}$ obtained from $L$ by removing the colors of $z$ and $z'$ according
to $\psi$ from the lists of their neighbors and by setting $L'(w_2)=\{\psi(w_2)\}$, $L'(v_m)=\{\psi(v_m)\}$ and
$L'(w'_2)=\{\psi(w'_2)\}$.  Note that $(G'_2, w_2v_mw'_2, N\cap V(G_2),\emptyset,L')$ satisfies (O) and (C) by the distance
condition and (P) by the choice of $\psi$, and since $G$ is not $L$-colorable, we conclude that $B'_2$ violates (T).
Therefore, $G_2$ contains a vertex
adjacent to $w_2$, $w'_2$, $v_m$, $z$ and $z'$.  This contradicts Lemma~\ref{lemma-cl-noseptr}.

Therefore, we have $z,z'\in V(G_1)$, see Figure~\ref{fig-A4}(c). By Lemma~\ref{lemma-cl-nosepnearc}, $\deg(v_m)=4$.
Let $S_1=L(v_2)$ if $m=3$ and $S_1=L(v_1)\setminus L(p_0)$ if $m=2$.  Note that $S_1\subset L(v_m)$, as otherwise
we consider the partial coloring $\vf$ with $\vf(v_{m-1})\in S_1\setminus L(v_m)$ and conclude that
$(\gf, P, N, \emptyset, \lf)$ contradicts the minimality of $G$.

Suppose that there exists a color $c\in L(w_2)\setminus L(v_m)$,
or that $\deg(v_{m-1})=3$ and there exists a color $c\in L(w_2)\setminus S_1$, such that this color $c$ is distinct
from the colors of the neighbors of $w_2$ in $P$.  Let $\vf$ be the partial coloring with $\vf(w_2)=c$,
and let $G'=G-\{w_2,v_m\}$ if $\deg(v_{m-1})>3$ and $G'=G-\{w_2,v_m, v_{m-1}\}$ if $\deg(v_{m-1})=3$.
Note that $G'$ is not $\lf$-colorable, and that $\lf(v_{m-1})=L(v_{m-1})$ if $v_{m-1}$ belongs
to $V(G')$ since $\vf(w_2)\not\in S_1$.  By the minimality of $B$, the target $B'=(G',P, N\cup\{z\}, \emptyset,\lf)$ is not valid.
This is only possible if $B'$ violates (O), and by Lemma~\ref{lemma-cl-nearobst} and the distance condition, $G'$ contains \ob{N2}
or \ob{N3}.  However, then $z$ is adjacent to two vertices of $P$ and to $z'$ and $w'_2$, and at least one of $z'$ and $w'_2$
has a list of size three according to $L'$, which is a contradiction since $|L(z')|=|L(w'_2)|=5$.

We conclude that there exists no such color $c$.  Since $|L(v_m)|=4$ and $|L(w_2)|=5$, we conclude that $w_2$
has a neighbor in $P$.  By Lemma~\ref{lemma-cl-2chord}, $w_2$ is not adjacent to $p_2$, and if it were adjacent to $p_0$,
then we would have $m=2$, $\deg(v_1)=3$ and there would exist a color $c\in L(w_2)\setminus (S_1\cup L(p_0)\cup L(p_1))$.
Therefore, $w_2$ is adjacent to $p_1$.  By symmetry, $w'_2$ is adjacent to $p_1$ as well.  However, the
edges $w_2p_1$ and $w'_2p_1$ are not crossed by Lemma~\ref{lemma-cl-nopcr}, and thus the crossing is contained
inside the $4$-cycle $v_mw_2p_1w'_2$, contrary to Lemma~\ref{lemma-cl-nosepq}.
\end{itemize}
\end{itemize}

We conclude that $V(G_q)\cap V(F)\neq\emptyset$.  By Lemma~\ref{lemma-cl-nochord}, $w_2\not\in V(F)$.
Let $w$ be the vertex joined to $w_2$ by a crossed edge, and let $w_1w'$ be the other crossing edge.  Since $V(G_q)\cap X=\emptyset$, by
Lemmas~\ref{lemma-cl-nopcr} and \ref{lemma-cl-2chord} we have $w\not\in V(F)$.  Since $|L(v_m)|\ge 4$, $v_m$ has degree at least four; hence, we cannot have $w_1=v_{m+1}$, and by Lemmas~\ref{lemma-cl-nochord}
and \ref{lemma-cl-nopcr}, we have $w_1\not\in V(F)\setminus \{v_1\}$.  If $w_1\not\in V(F)$ and $x\in X$ is a neighbor of $w_1$,
then the $2$-chord $xw_1w'$ separates $P$ from either $w_2$ or $w$, and neither $w_2$ nor $w$ belongs to $F$, contrary to Lemma~\ref{lemma-cl-2chord}.
We conclude that $w_1=v_1$ and $V(G_q)\cap V(F)=\{v_1\}$, hence $v_1\not\in X$ and $X$ was chosen according to (X4a).

\begin{figure}
\begin{center}
\includegraphics{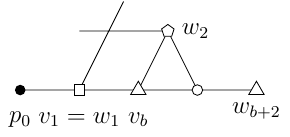}
\end{center}
\caption{Subcase combining (A3) and (A5').}
\label{fig-A4-A7}
\end{figure}

Since $|V(G_q)\cap V(F)|=1$, $B$ must satisfy (A3'), (A5') or (A6').
If $B$ satisfied (A3'), the conclusions of the preceding paragraph would apply symmetrically and we would have $v_1=v_s$, which is a contradiction.
Similarly, $X'$ cannot satisfy (A6').  The remaining possibility is that (A5') holds for $X'$. Then $v_1=v_{b-1}$ and
$v_b=v_2$. The situation is shown in Figure \ref{fig-A4-A7}.  Since $X$ was chosen according to (X4a),
we have $|L(v_b)|=|L(v_{b+2})|=3$; in particular, $s\ge4$ and $b\le s-2$.  This is only possible if
$X'$ has been chosen according to (X4), but then $|L(v_b)|>3$.  This is a contradiction, showing that $B$ does not satisfy (A3).
Symmetrically, $B$ does not satisfy (A3').
\end{proof}

Next, we exclude the case that the crossing near $P$ is incident with two vertices of $F$.
\begin{lemma}\label{lemma-ex1}
With the notation {\rm ($\dagger$)}, let $q$ denote the crossing contained in $S$.  Then $|V(G_q)\cap V(F)|=1$.
\end{lemma}
\begin{proof}
Since $B$ does not satisfy (A3), if $|V(G_q)\cap V(F)|\neq 1$ then $|V(G_q)\cap V(F)|=2$ and $B$ satisfies (A4) or (A5).
Symmetrically, $B$ satisfies (A4') or (A5').
By Lemmas~\ref{lemma-cl-nochord} and \ref{lemma-cl-crossnn}, $V(G_q)\cap V(F)=\{v_{m+1},v_{m+2}\}$ and $v_{m+1}$ is crossing-adjacent to $v_{m+2}$.
Let $v_{m+1}w$ and $v_{m+2}w'$ be the crossed edges.
By symmetry, we can assume that $|L(v_{m+1})|\ge |L(v_{m+2})|$.  By (C), either $|L(v_{m+1})|\ge |L(v_{m+2})|\ge4$ or $|L(v_{m+1})|=5$ and $|L(v_{m+2})|=3$.
Therefore, $X$ was chosen according to the rules (X1) or (X3) and $|L(v_m)|=3$.

If $L(v_{m+2})\neq L(v_{m+1})$, then let $c$ be a color in $L(v_{m+1})\setminus L(v_{m+2})$.  If $v_{m+1}$ is not adjacent to $v_{m+2}$, then
let $c$ be an arbitrary color in $L(v_{m+1})$. In both cases, let us define a partial $L$-coloring $\vf$ as follows.
Let $\vf(v_{m+1})=c$.  If $m=1$, then choose $\vf(v_1)\in L(v_1)\setminus L(p_0)$ distinct from $c$.
If $m=2$ and $v_1$, $v_2$, and $v_3$ have a common neighbor, then choose $\vf(v_1)\in L(v_1)\setminus L(p_0)$ so that $|\lf(v_2)|\ge 2$.
Otherwise, choose $\vf(v_m)\in L(v_m)$ distinct from $c$.  Note that $\gf-v_m$ is not $\lf$-colorable, and by the minimality of $B$,
we conclude that $B'=(\gf-v_m,P,N\cup\{w\},\emptyset,\lf)$ is not a valid target.
This is only possible if $B'$ violates (O).  By Lemma~\ref{lemma-cl-nearobst}, $B'$ contains \ob{N2} or \ob{N3}.
It follows that $w$ is adjacent to $p_1$ and to $p_0$ or $p_2$.  However, if $w$ is adjacent to $p_0$, then by Lemma~\ref{lemma-cl-2chord},
$v_{m+2}$ is incident with a chord of $F$, contradicting Lemma~\ref{lemma-cl-nochord}.  
If $w$ is adjacent to $p_2$, then $v_{m+2}=v_s$ by Lemma~\ref{lemma-cl-2chord}.
This contradicts the assumption that $B$ satisfies (A4') or (A5'). We conclude that $L(v_{m+1})=L(v_{m+2})$ (and in particular, $|L(v_{m+1})|=|L(v_{m+2})|=4$ by (A4) and (A5)),
and $v_{m+1}v_{m+2}\in E(G)$. By the choice of $X'$, we have $|L(v_{m+3})|=3$.

Suppose now that $w'v_m\in E(G)$. Note that $v_{m+1}$ has degree at least four, so it is adjacent to $w'$.
Let $S_1=L(v_m)$ if $m\neq 1$ and $S_1=L(v_m)\setminus L(p_0)$ if $m=1$.  Note that $S_1\subseteq L(v_{m+1})$, as otherwise we can choose an $L$-coloring $\vf$
of $v_m$ such that $\vf(v_m)\in S_1\setminus L(v_{m+1})$, and $(G-\{v_m,v_{m+1}\},P,N,\emptyset,\lf)$ is a counterexample contradicting the minimality of $B$.
Since $L(v_{m+1}) = L(v_{m+2})$, we conclude that $S_1\subseteq L(v_{m+2})$.
Let $G'$ be the graph obtained from $G-v_{m+1}$ by identifying $v_m$ with $v_{m+2}$ to a new vertex $z$, and let $L'$ be the list assignment obtained from $L$ by setting $L'(z)=L(v_m)$.
Let $B'=(G',P,N,\{zv_{m+3}\},L')$.  Observe that every coloring of $G'$ gives rise to an $L$-coloring of $G$, and thus $G'$ is not
$L'$-colorable.  Note that $B'$ satisfies (O), since $B'$ contains neither \ob{M1} nor \ob{M2}, and the exclusion of other obstructions is obvious.
It follows that $B'$ is a counterexample contradicting the minimality of $B$.
Therefore, $w'v_m\not\in E(G)$, and by symmetry, $wv_{m+3}\not\in E(G)$.

Let $S_2=L(v_{m+3})$ if $m+3\neq s$ and $S_2=L(v_{m+3})\setminus L(p_2)$ if $m+3=s$.  Suppose now that there exists an $L$-coloring
$\vf$ of $v_{m+1}$ and $v_{m+2}$ by distinct colors such that $\vf(v_{m+1})\not\in S_1$ and $\vf(v_{m+2})\not\in S_2$.
Let $M'=\{ww'\}$ if $ww'\in E(G)$, and $M'=\emptyset$ otherwise.
By the minimality of $B$, we conclude that $B'=(\gf,P, N, M', \lf)$ is not a valid target.
This is only possible if $B'$ violates (O).
By Lemma~\ref{lemma-cl-nearobst}, $B'$ contains \ob{M1} (the other cases are easily excluded: \ob{N2} and \ob{N3} since no internal vertex
gets a reduced list and \ob{P3} since $\ell(P)=2$).  But then $w'$ is adjacent to $p_0$,
and the $2$-chord $p_0w'v_{m+2}$ contradicts Lemma~\ref{lemma-cl-2chord}.  Therefore, no such coloring $\vf$ exists.
It follows that $|S_1|=|S_2|=3$ and $S_1\subseteq L(v_{m+1})$.
Since $L(v_{m+1})=L(v_{m+2})$, we also have that $S_1=S_2$.
Since $|S_1|=|S_2|=3$, claim Lemma~\ref{lemma-cl-basic}(f) implies that $m=2$ and $s=6$.
Similarly, we conclude that $L(v_1)=L(p_0)\cup L(v_2)$ and $L(v_6)=L(p_2)\cup L(v_5)$, as otherwise we can color and remove $v_1$ or $v_6$.

Let us now consider the case that $v_2$, $v_3$ and $w'$ have no common neighbor.  If $v_1$, $v_2$ and $v_3$ have no common neighbor, then let
$\vf$ be an $L$-coloring of $v_2$, $v_3$ and $v_4$ such that $\vf(v_4)\not\in L(v_5)$.  Otherwise, let $\vf$ be an $L$-coloring
of $v_1$, $v_3$ and $v_4$ such that $\vf(v_4)\not\in L(v_5)$ and $\vf(v_1)=\vf(v_3)$.  In the former case, let $G'=\gf$,
in the latter case let $G'=\gf-v_2$.  Let $M'=\{ww'\}$ if $ww'\in E(G)$, and $M'=\emptyset$ otherwise.
Note that $B'=(G',P,N,M',\lf)$ satisfies (O) by Lemma~\ref{lemma-cl-nearobst}, since $w'$ cannot be adjacent to $p_0$.
We conclude that $B'$ is a counterexample contradicting the minimality of $B$.
Therefore, $v_2$, $v_3$ and $w'$ have a common neighbor $x'$,
and by symmetry, $v_4$, $v_5$ and $w$ have a common neighbor $x$ (see Figure~\ref{fig-A6}).

\begin{figure}
\begin{center}
\includegraphics[width=78mm]{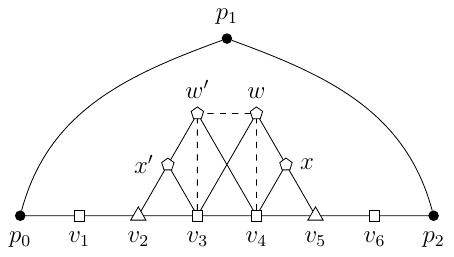}
\end{center}
\caption{A subcase in the proof when $|V(G_q)\cap V(F)|=2$. The dotted edges may or may not be present.}
\label{fig-A6}
\end{figure}

By Lemma~\ref{lemma-cl-2chord}, we have $x\neq x'$ and
$x$ is adjacent neither to $p_0$ nor to $p_2$.  Furthermore, if $xp_1\in E(G)$, then consider the cycle $K=p_1p_2v_6v_5x$.  Since $v_6$ has degree at
least four, we conclude by Lemma~\ref{lemma-cl-noseppen} that $K$ has two chords incident with $v_6$.  However, that contradicts Lemma~\ref{lemma-cl-nochord}.
Therefore, $x$ (and symmetrically $x'$) has no neighbor in $P$.  By Lemma~\ref{lemma-cl-2chord}, neither $w$ nor $w'$ is adjacent to $p_0$ or $p_2$.
Lemmas~\ref{lemma-cl-noseptr} and \ref{lemma-cl-nosepq} imply that $x'w, xw', xx'\not\in E(G)$.
Since both $w$ and $w'$ have degree at least $5$, we conclude that each of them is adjacent either to $p_1$ or to a vertex not shown in Figure~\ref{fig-A6}.
Suppose that $w'p_1\not\in E(G)$.  Then let $\vf$ be an $L$-coloring of $x$ and $w'$ such that $\vf(x),\vf(w')\not\in L(v_4)$
(note that these colors do not belong to the lists of $v_2$, $v_3$ and $v_5$, as well as to $L(v_1)\setminus L(p_0)$ and $L(v_6)\setminus L(p_2)$).
Let $G'=G-\{x,w',v_3,v_4\}$ if $\deg(w)>5$ and $G'=G-\{x,w',v_3,v_4,w\}$ if $\deg(w)=5$.
Note that $G'$ is not $\lf$-colorable since any $\lf$-coloring of $G'$ extends to $G$. Furthermore, the only possible vertices
with list of size three in $G'$ are $v_2$, $v_5$, $w$ and a common neighbor $u$ of $x$ and $w'$ distinct from $w$ and $v_4$, if such a vertex exists.
By Lemma~\ref{lemma-cl-nosepq}, if $u$ exists, then $\deg(w)=5$ and $w\not\in V(G')$.  Furthermore, by Lemma~\ref{lemma-cl-nosepq},
$u$ and $w$ are not adjacent to $v_2$ and $v_5$.  Therefore, $(G',P,N,\emptyset,\lf)$ is a counterexample contradicting the
minimality of $B$.

We conclude that $w'p_1\in E(G)$.  Let $G_1$ and $G_2$ be the $p_1w'v_4$-components of $G$, where $G_1$ contains $p_0$.  Consider an $L$-coloring $\vf$ of
$G_2$. Note that $v_3$ has only two neighbors in $G_2-w'$, and thus $\vf$ can be extended to $v_3$ in such a way that $\vf(v_3)\neq\vf(w')$.
Let $G'=G_1-v_4+w'v_3$, $P'=p_0p_1w'v_3$ and let $L'$ be the list assignment obtained from $L$ by setting $L'(z)=\{\vf(z)\}$ for $z\in V(P')$.
By the minimality of $B$, we conclude that $B'=(G',P',N,\emptyset,L')$ is not a valid target.
This is only possible if $B'$ violates (O). Observe that only $v_1$ and $v_2$ and vertices in $N$ have list of size at most four in $B'$ and that
$x'$ is a common neighbor of $v_3$ and $w'$.  Therefore, $x'$ is a vertex in the corresponding obstruction $K$, and $v_2$ is a vertex
in $K$ with list of size 3. It follows that $K$ is equal to \ob{P4} or \ob{P6}.  However, then $v_1p_1\in E(G)$ or $v_2p_0\in E(G)$,
contradicting Lemma~\ref{lemma-cl-nochord}.
\end{proof}

We are now ready to finish the proof of the main theorem.

\begin{proof}[Proof of Theorem~\ref{thm-maingen}]
We proceed by contradiction; if Theorem~\ref{thm-maingen} is false, then there exists a counterexample, and we
can choose $B$ as a minimal counterexample.
With the notation ($\dagger$), note that $S$ is a crossing by Lemma~\ref{lemma-cl-iscross}.  Let $q$ be the crossing contained in $S$. 
By Lemma~\ref{lemma-ex1}, we have $|V(G_q)\cap V(F)|=1$, and thus $B$ satisfies (A5) or (A6), and (A5') or (A6').  Since $s\ge 3$, we can
by symmetry assume that $B$ satisfies (A5').

Suppose first that $B$ satisfies (A6), and thus $b=2$.  Let $z$ be the neighbor of $v_1$ along the crossed edge.
Since $v_1\not\in X$, the inspection of possible cases for $X$ and $X'$ shows that we have $|L(v_2)|=3$, $X'=\{v_2\}$, and $s=3$.  If $v_1$, $v_2$ and
$v_3$ have no common neighbor, then consider any $L$-coloring $\vf$ of $v_1$ and $v_2$ such that $\vf(v_1)\not\in L(p_0)$, and observe that
$(\gf, P, N\cup\{z\}, \emptyset,\lf)$ is a counterexample contradicting the minimality of $B$ (since $v_1,v_2,v_3$ do not have a common neighbor, we do not get
adjacent vertices with lists of size 3, and (O) is satisfied since $z$ is not adjacent to $p_0$ and $p_2$ by Lemmas~\ref{lemma-cl-2chord} and \ref{lemma-cl-nopcr}).

Hence, we can assume
that $v_1$, $v_2$ and $v_3$ have a common neighbor $w$, and thus $\deg(v_2)=3$.
Similarly, we conclude that $L(v_1)=L(p_0)\cup L(v_2)$ (if not, we color $v_1$ with a color
in $L(v_1)\setminus (L(p_0)\cup L(v_2))$ and then consider $G'=G-\{v_1,v_2\}$) and that $L(v_3)=L(p_2)\cup L(v_2)$ (if not, we can color $v_3$
by a color in $L(v_3)\setminus (L(p_2)\cup L(v_2))$ and then consider $G'=G-\{v_2,v_3\}$).
By Lemmas~\ref{lemma-cl-nosepq}, \ref{lemma-cl-nopcr} and \ref{lemma-cl-2chord}, $w$ has no neighbor in $P$.  Let $u$ be the vertex adjacent to $w$ by the
crossed edge, let $\vf$ be an $L$-coloring of $w$ such that $\vf(w)\not\in L(v_2)$ and let $G'=G-\{v_2,w\}$.
Note that $B'=(G',P,N\cup \{u\},\emptyset,\lf)$ satisfies (O), since it contains no vertex with list of size three.
Thus, $B'$ is a counterexample to Theorem~\ref{thm-maingen} contradicting the minimality of $G$,
which implies that $B$ does not satisfy (A6).

Therefore, $B$ satisfies both (A5) and (A5'), and we have $b=m+2$.  Moreover, Lemma~\ref{lemma-cl-nosepq} implies that the neighbors $w$ and $w'$ of $v_m$ and $v_b$ in $V(G_q)\setminus \{v_{m+1}\}$
are distinct.  Let $y$ be the vertex joined to $v_{m+1}$ by a crossed edge.
If $|L(v_{m+1})|\ne3$, then both $X$ and $X'$ are chosen by cases (X1) or (X3) and $|L(v_m)|=|L(v_{m+2})|=3$.
The condition (A5) implies $|L(v_{m+1})|=4$. However, in that case we have $|L(v_{m+2})|\ne3$ both in (X1) and (X3), which is a contradiction.
Therefore, $|L(v_{m+1})|=3$.  Consequently, $X$ and $X'$ were chosen by (X2) or (X4) and we have
$|L(v_m)|,|L(v_{m+2})|\ge 4$ and $|L(v_{m-1})|=|L(v_{m+3})|=3$.
Since $\deg(v_m)\ge 4$, Lemmas~\ref{lemma-cl-nosepnearc} and \ref{lemma-cl-2chord} imply that $w$ has no neighbor in $F$ other than $p_1$, $v_m$ and $v_{m+1}$,
and by symmetry, the only possible neighbors of $w'$ in $F$ are $p_1$, $v_{m+1}$ and $v_{m+2}$.

Let $S_1=L(v_{m-1})$ if $m=3$ and $S_1=L(v_{m-1})\setminus L(p_0)$ if $m=2$.
Let $S_2=L(v_{b+1})$ if $b=s-2$ and $S_2=L(v_{b+1})\setminus L(p_2)$ if $b=s-1$.
By symmetry, we can assume that if $m=2$, then $b=s-1$.
Let $S$ be the set of colors $c\in L(v_{m+1})$ such that either
\begin{itemize}
\item[{\rm (a)}] $L(v_{m+2})=S_2\cup \{c\}$, or
\item[{\rm (b)}] $|L(v_m)|=4$, $c\not\in S_1$ and $S_1\cup \{c\}\subseteq L(v_m)$.
\end{itemize}
If $m=2$, then we have $b=s-1$, $|S_1|=|S_2|=2$, there are at most two colors with the property (b)
and no colors with the property (a).
If $m=3$, then $|S_1|=3$ and $|S_2|\le 3$, there is at most one color with the property (b)
and at most one color with the property (a).  It follows that $|S|\le 2$.
Let $\vf$ be an $L$-coloring of $v_{m-1}$, $v_{m+1}$ and $v_{m+2}$ chosen so that
$\vf(v_{m+2})\not\in S_2$, $\vf(v_{m+1})\not\in S$, $\vf(v_{m-1})\in S_1$ and
$|L(v_m)\setminus \{\vf(v_{m-1}),\vf(v_{m+1})\}|\ge 3$.
Note that the choices for $\vf(v_{m+2})$ and $\vf(v_{m-1})$ are possible,
since $\vf(v_{m+1})$ does not satisfy (a) and (b), respectively.

Consider $G'=G-\{v_{m-1}, v_{m+1}, v_{m+2}\}$ with the list assignment $\lf$.
By Lemma~\ref{lemma-cl-2chord}, $v_{m-1}$ has no common neighbor with $v_{m+1}$ other than $v_m$,
and $v_{m-1}$ has no common neighbor with $v_{m+2}$, and the only common neighbor of $v_{m+1}$ and $v_{m+2}$
is $w'$.  Therefore, the only vertices with list of size three are
$v_1$ if $m=3$, $v_m$, $v_{m+3}$ and $w'$.  Since $w'$ is not adjacent to
$v_{m+3}$, we conclude that $B'=(G',P,N\cup \{y\},\emptyset,\lf)$ satisfies (M).
Furthermore, $y$ is adjacent neither to $p_0$ nor to $p_2$ by Lemma~\ref{lemma-cl-2chord}, hence $B'$
satisfies (O) by Lemma~\ref{lemma-cl-nearobst}.  Therefore, $B'$ is a counterexample contradicting the minimality of~$B$.

This shows that there exists no minimal counterexample, and thus Theorem~\ref{thm-maingen} holds.
\end{proof}

\section{Acknowledgments}
We would like to thank Riste \v{S}krekovski for hosting us during our stay in Ljubljana where this paper originated
and for stimulating discussions of the problem.  We would also like to thank the referee of the paper for persistence
and useful comments.

\bibliographystyle{elsarticle-num}
\bibliography{5choosfar}

\begin{thebibliography}{1}
\expandafter\ifx\csname url\endcsname\relax
  \def\url#1{\texttt{#1}}\fi
\expandafter\ifx\csname urlprefix\endcsname\relax\def\urlprefix{URL }\fi
\expandafter\ifx\csname href\endcsname\relax
  \def\href#1#2{#2} \def\path#1{#1}\fi

\bibitem{thomassen1994}
C.~Thomassen, Every planar graph is 5-choosable, J. Combin. Theory, Ser.~B 62
  (1994) 180--181.

\bibitem{thomassen1995-34}
C.~Thomassen, 3-list-coloring planar graphs of girth 5, J. Combin. Theory,
  Ser.~B 64 (1995) 101--107.

\bibitem{LidDvoSkre}
Z.~Dvo\v{r}\'ak, B.~Lidick\'y, R.~\v{S}krekovski, Graphs with two crossings are
  5-choosable, SIAM J. Discrete Math. 25 (2011) 1746--1753.

\bibitem{CaHa11}
V.~Campos, F.~Havet, 5-choosability of graphs with 2 crossings, ArXiv e-prints
  1105.2723.

\bibitem{voigt1993}
M.~Voigt, List colourings of planar graphs, Discrete Math. 120 (1993) 215--219.

\bibitem{Alb98}
M.~O. Albertson, You can't paint yourself into a corner, J. Combin. Theory,
  Ser.~B 73~(2) (1998) 189--194.

\bibitem{LidDvoMohPos}
Z.~Dvo\v{r}\'ak, B.~Lidick\'y, B.~Mohar, L.~Postle, $5$-list-coloring planar
  graphs with distant precolored vertices, ArXiv e-prints 1209.0366.

\end{thebibliography}

\end{document}